\newtheorem{thm}{Theorem}[section]
\newtheorem{prop}[thm]{Proposition}
\newtheorem{lem}[thm]{Lemma}
\newtheorem{cor}[thm]{Corollary}
\newtheorem{thmx}{Theorem}
\numberwithin{equation}{section}
\theoremstyle{definition}
\newtheorem{remark}[thm]{Remark}
\newtheorem{ex}[thm]{Example}
\DeclareFontFamily{U}{mathc}{}
\DeclareFontShape{U}{mathc}{m}{it}%
{<->s*[1.03] mathc10}{}
\DeclareMathAlphabet{\mathcal}{U}{mathc}{m}{it}
\newcommand{\im}{\operatorname{im}}
\newcommand{\Aut}{{\rm Aut}}
\newcommand{\Br}{{\rm Br}}
\newcommand{\SBr}{{\rm SBr}}
\newcommand{\Pic}{{\rm Pic}}
\newcommand{\rk}{{\rm rk}}
\newcommand{\per}{{\rm per}}
\newcommand{\ind}{{\rm ind}}
\newcommand{\Hom}{{\rm Hom}}
\newcommand{\Spec}{{\rm Spec}}
\newcommand{\PGL}{{\rm PGL}}
\newcommand{\GL}{{\rm GL}}
\newcommand{\SL}{{\rm SL}}
\newcommand{\coker}{{\rm coker}}
\newcommand{\cal}{\mathcal}
\newcommand{\ka}{{\cal A}}
\newcommand{\ke}{{\cal E}}
\newcommand{\kg}{{\cal G}}
\newcommand{\kh}{H}
\newcommand{\kl}{{\cal L}}
\newcommand{\ko}{{\cal O}}
\newcommand{\kp}{{ Q}}
\newcommand{\kt}{{B}}
\newcommand{\kx}{{\cal X}}
\newcommand{\kz}{{\cal Z}}
\newcommand{\GG}{\mathbb{G}}
\newcommand{\ZZ}{\mathbb{Z}}
\newcommand{\QQ}{\mathbb{Q}}
\newcommand{\CC}{\mathbb{C}}
\newcommand{\PP}{\mathbb{P}}
\DeclareSymbolFont{cyrletters}{OT2}{wncyr}{m}{n}
\DeclareMathSymbol{\Sha}{\mathalpha}{cyrletters}{"58}
\renewcommand{\to}{\xymatrix@1@=15pt{\ar[r]&}}
\newcommand{\lto}{\xymatrix@1@=15pt{&\ar[l]}}
\renewcommand{\rightarrow}{\xymatrix@1@=15pt{\ar[r]&}}
\renewcommand{\mapsto}{\xymatrix@1@=15pt{\ar@{|->}[r]&}}
\newcommand{\mapslto}{\xymatrix@1@=15pt{&\ar@{|->}[l]&}}
\renewcommand{\twoheadrightarrow}{\xymatrix@1@=18pt{\ar@{->>}[r]&}}
\renewcommand{\hookrightarrow}{\xymatrix@1@=15pt{\ar@{^(->}[r]&}}
\newcommand{\hook}{\xymatrix@1@=15pt{\ar@{^(->}[r]&}}
\newcommand{\congpf}{\xymatrix@1@=15pt{\ar[r]^-\sim&}}
\renewcommand{\cong}{\simeq}
\newcommand{\TBC}[1]{}
\def\blfootnote{\xdef\@thefnmark{}\@footnotetext}
\begin{document}

\title{Universal Brauer--Severi varieties}

\author[F.\ Gounelas, D.\ Huybrechts]{Frank Gounelas and Daniel Huybrechts }

\address{Mathematisches Institut \& Hausdorff Center for Mathematics,
Universit{\"a}t Bonn, Endenicher Allee 60, 53115 Bonn, Germany}
\email{gounelas@math.uni-bonn.de, huybrech@math.uni-bonn.de}

\begin{abstract} \noindent We construct universal Brauer--Severi varieties of fixed period and index and study their
geometry. We determine their cohomology and their Brauer and Picard groups and show that they are almost
always simply connected. As an application, we
reinterpret the discriminant avoidance
result of de Jong and Starr in terms of  universal Brauer--Severi varieties. 
 \vspace{-2mm}
\end{abstract}

\maketitle
\blfootnote{Both authors are supported by the ERC Synergy Grant HyperK (ID 854361).}

The purpose of this article is to give a concrete
 geometric meaning to certain constructions in the literature. The main motivation is to eventually be able to use global methods
in algebraic geometry to approach algebraic problems concerning Brauer groups of function fields.\smallskip

More concretely, we construct universal Brauer--Severi varieties $\kp\to\kt$ depending on
a fixed degree $n$ and period $d$ such that any other  Brauer--Severi variety with the same invariants
is obtained by pull-back. We phrase this here as follows, see later for more precise versions.

\begin{thmx}\label{thm1}  Fix three integers $m$, $n$, and $d$ with $d\mid n\mid d^e$ for some $e$.
Then there exists a Brauer--Severi variety $\kp\to\kt$ of relative dimension $n-1$, index $n$, and period $d$ over
a simply connected, quasi-projective, smooth variety $\kt$ such that
$$\Pic(\kt)\cong \ZZ~\text{ and }~\Br(\kt)\cong \ZZ/d\ZZ$$
and with the following universality property:
If $P\to X$ is any Brauer--Severi variety of relative dimension $n-1$ and  period dividing $d$
over a quasi-projective variety $X$ of dimension $\leq m$,
then $P$ is isomorphic to the pull-back of $\kp$ under a classifying morphism
$X\to \kt$.
\end{thmx}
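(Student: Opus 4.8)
The plan is to realise $\kp\to\kt$ as a finite-dimensional approximation of the tautological family over the classifying stack of a suitable group. Set $G\coloneqq\GL_n/\mu_d$, which makes sense because $d\mid n$ guarantees $\mu_d\subset\GG_m=Z(\GL_n)$ and that $\det$ descends to a character of $G$; in particular $\hat G\cong\ZZ$. The quotient $\GL_n\to\PGL_n$ factors through $G$, giving a central extension $1\to\GG_m\to G\to\PGL_n\to1$, so any $G$-torsor yields, by push-out along $G\to\PGL_n$, a Brauer--Severi variety of relative dimension $n-1$. The first key step is to match the period condition with liftability: comparing this extension with $1\to\GG_m\to\GL_n\to\PGL_n\to1$ through the quotient map $\GL_n\to G$, which is the $d$-th power map on kernels, the two boundary maps into $\Br$ differ by multiplication by $d$. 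Hence the obstruction to lifting a $\PGL_n$-torsor to a $G$-torsor is exactly $d$ times its Brauer class, and I conclude that $G$-torsors on $X$ correspond precisely to Brauer--Severi varieties of relative dimension $n-1$ whose Brauer class is killed by $d$, i.e.\ of period dividing $d$.

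Next I would construct $\kt$ by Totaro's approximation of $BG$. Choosing a faithful representation $W$ of $G$ and setting $V\coloneqq W^{\oplus r}$, the group $G$ acts freely on a large open $U\subseteq V$ whose complement has codimension $>m$ once $r\gg0$; put $\kt\coloneqq U/G$ and let $\kp$ be the $\PP^{n-1}$-bundle associated with the push-out to $\PGL_n$ of the tautological torsor $U\to\kt$. Smoothness and quasi-projectivity of $\kt$ are standard, and the remaining invariants are read off from $BG$: since $U$ is simply connected and $G$ is connected, $\pi_1(\kt)=1$; since $\Pic(U)=0$ one gets $\Pic(\kt)\cong\hat G\cong\ZZ$; and the Brauer group is computed from the Kummer-type sequence $1\to\mu_d\to\GL_n\to G\to1$, using $\Br(B\GL_n)=0$, to give $\Br(\kt)\cong\ZZ/d\ZZ$ with the class of $\kp$ as a generator.

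For the universality property, given $P\to X$ of relative dimension $n-1$ and period dividing $d$ with $\dim X\le m$, the first step lifts its $\PGL_n$-torsor to a $G$-torsor $T\to X$. A general section of the associated vector bundle $T\times^G V\to X$ avoids the bad locus $T\times^G(V\setminus U)$ because the latter has codimension $>m$; such a section is exactly a classifying morphism $f\colon X\to\kt$ with $f^*U\cong T$, whence $f^*\kp\cong P$. Finally, period and index are pinned to their claimed values by pulling back: the existence (guaranteed by $d\mid n\mid d^e$) of a Brauer--Severi variety of period exactly $d$ and index exactly $n$ over some base of dimension $\le m$ yields, by the universality just proved, a morphism along which $\kp$ restricts to it; since order and index can only drop under pull-back while the construction already forces period dividing $d$ and index dividing $n$, both invariants of $\kp$ equal their maxima $d$ and $n$.

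The main obstacle I expect is the Brauer-group computation $\Br(\kt)\cong\ZZ/d\ZZ$ together with the verification that the tautological class generates it: this is where the precise cohomology of $BG$ enters, and one must check that the finite-dimensional approximation $U/G$ already captures the full Brauer and Picard groups in the relevant range of codimension. A secondary technical point is guaranteeing enough global sections of $T\times^G V$ over the quasi-projective (not necessarily affine) base $X$ to produce the classifying map into the finite approximation; this is handled by the standard reduction to the affine case via Jouanolou's trick, and it is precisely what the dimension bound $\dim X\le m$ and the large codimension of $V\setminus U$ are arranged to make possible.
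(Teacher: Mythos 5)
Your construction is, up to presentation, the one the paper uses: the base is a Totaro-type finite-dimensional approximation of ${\rm B}(\GL(n)/\mu_d)$, where $\GL(n)/\mu_d\cong\Aut(\PP^{n-1},\ko(d))$ is exactly the group governing Brauer--Severi varieties of relative dimension $n-1$ and period dividing $d$ (this is Lemma \ref{lem:Or} together with Remark \ref{rem:AutO(d)}), and the paper's $\kt_i$ is precisely $U_N/\GL(V)$ for the representation $\Hom(S^dV,S^dV\oplus k^{\oplus i})$ with $U_N$ the locus of injective maps. Your final step --- pinning down period and index by pulling back a known example of period exactly $d$ and index exactly $n$ and using that both invariants can only drop --- is also the paper's argument (Proposition \ref{prop:piUniv}).

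The one step that fails as written is the production of the classifying morphism over a quasi-projective, non-affine $X$. A ``general section'' of $T\times^G V\to X$ avoids a codimension $>m$ locus only if that bundle is globally generated, which it need not be; and Jouanolou's trick does not repair this, because it replaces $X$ by an affine bundle $X'\to X$ and therefore classifies $P|_{X'}$ rather than $P$, whereas the theorem asserts that $P$ itself is a pull-back along a morphism from $X$. The correct fix, and the one the paper uses, is to twist: embed $\ke=\pi_\ast\ko(d)^\vee$ into $W\otimes\kl^m$ for an ample $\kl$ and $m\gg0$ (possible with $\dim W=N+\dim X$), and note that this changes the $G$-torsor but not its pushout to $\PGL(n)$, since $\PP(\ke)\cong\PP(\ke\otimes\kl^{-m})$; the resulting morphism $X\to\kt$ therefore still pulls $\kp$ back to $P$. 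Two smaller points: your $\pi_1$ and $\Pic$ computations need the complement of the free locus to have codimension at least two, which holds only for $i>0$ in the paper's indexing (the minimal model $\kt_0$ has nontrivial cyclic $\pi_1$ and finite Picard group); and the known examples of classes of period $d$ and index $n$ need not live on a base of dimension $\leq m$, so one must also know that the period and index of the universal class are independent of the approximation level, which the paper deduces from the retraction of a dense open subset of $\kt_i$ onto $\kt_0$.
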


All varieties will be considered over a fixed field $k$. The universality of our construction holds without
any further assumption, but the description of the fundamental group and of the Brauer group
assumes $k$ to be algebraically closed. \smallskip

Note that in the above statement,
the classifying map $X\to \kt$ is not necessarily unique. Also, there is in fact an ascending sequence
of universal Brauer--Severi varieties $\kp_0\subset \kp_1\subset\cdots$ over a sequence
of closed immersions $\kt_0\subset \kt_1\subset \cdots$. The above result is valid for any
$\kp_i\to\kt_i$ with $i\gg0$. For the minimal universal Brauer--Severi variety $\kp_0\to\kt_0$
the fundamental group is non-trivial and the Picard group is cyclic of finite order.

\begin{thmx}\label{thm2}
For $k=\CC$ the singular cohomology of $\kt_i$ is given
as 
$$\xymatrix{H^\ast(\kt_i,\QQ)\cong \bigwedge^\ast\langle \xi_{2n+1},\ldots,\xi_{2N-1}\rangle\otimes H^*({\rm Gr}(N,N+i),\QQ).}$$
Here, $N={n+d-1\choose n-1}$ and $\xi_{2j-1}\in H^{2j-1}(\kt_i,\QQ)$ are classes of weight $2j$ and type $(j,j)$ in the mixed
Hodge structure of $B_i$.
\end{thmx}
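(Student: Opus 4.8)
The plan is to compute $H^\ast(\kt_i,\QQ)$ in two stages: first the rational cohomology of the fibre $\kt_0=\GL_N/G_d$, where (recall from the construction) $G_d=\GL_n/\mu_d$ sits in $\GL_N$ through the representation on $V=\mathrm{Sym}^d(k^n)$, $\dim V=N=\binom{n+d-1}{n-1}$; and then the fibre bundle $\GL_N/G_d\to\kt_i\to{\rm Gr}(N,N+i)$, to which I would apply the Leray--Hirsch theorem. The mixed Hodge structure is then read off from the fact that $\GL_N/G_d$ is a homogeneous space under a linear algebraic group.

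For the fibre I would compute $H^\ast(\GL_N/G_d;\QQ)$ via the universal fibration $\GL_N/G_d\to BG_d\to B\GL_N$. Rationally $\mu_d$ is invisible, so $H^\ast(BG_d;\QQ)=\QQ[\bar c_1,\dots,\bar c_n]$, while $H^\ast(B\GL_N;\QQ)=\QQ[c_1,\dots,c_N]$, the structure map sending $c_j$ to $c_j(\mathrm{Sym}^d)$. The Eilenberg--Moore spectral sequence identifies $H^\ast(\GL_N/G_d;\QQ)$ with $\mathrm{Tor}_{H^\ast(B\GL_N)}(\QQ,H^\ast(BG_d))$, which, resolving $\QQ$ by the Koszul complex on $c_1,\dots,c_N$, becomes the homology of the Koszul complex on the sequence $c_1(\mathrm{Sym}^d),\dots,c_N(\mathrm{Sym}^d)$ in $\QQ[\bar c_1,\dots,\bar c_n]$. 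The crucial input is that this sequence is primary to the irrelevant ideal and that its first $n$ members already generate it and form a regular sequence: the simultaneous vanishing of all Chern classes of $\mathrm{Sym}^d$ forces every weight $\sum_i a_ix_i$, hence every Chern root $x_i$, to vanish; and a power--sum computation shows the indecomposable part of $c_j(\mathrm{Sym}^d)$ is a nonzero multiple of $\bar c_j$ for $1\le j\le n$, since the coefficient of $x_1^j$ in $\sum_{\mu}(\sum_i a_i x_i)^j$ is a sum of positive terms. Hence $c_1(\mathrm{Sym}^d),\dots,c_n(\mathrm{Sym}^d)$ is a regular system of parameters, the remaining $c_{n+1}(\mathrm{Sym}^d),\dots,c_N(\mathrm{Sym}^d)$ lie in the ideal it generates, and the Koszul complex splits off an exterior factor. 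Its homology is $\bigwedge^\ast\langle\xi_{2n+1},\dots,\xi_{2N-1}\rangle$, where $\xi_{2j-1}$ is the homological--degree--$1$ class attached to the redundant generator $c_j(\mathrm{Sym}^d)$ of internal degree $2j$, landing in total degree $2j-1$.

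Next I would globalize over the Grassmannian. The structure map $\kt_i\to{\rm Gr}(N,N+i)$ has fibre $\kt_0=\GL_N/G_d$, and the construction provides a retraction $\kt_i\to\kt_0$; pulling back the classes $\xi_{2j-1}$ along this retraction produces global cohomology classes on $\kt_i$ restricting to the exterior generators on each fibre. The hypotheses of Leray--Hirsch are therefore satisfied, and $H^\ast(\kt_i,\QQ)$ is the free $H^\ast({\rm Gr}(N,N+i),\QQ)$-module $\bigwedge^\ast\langle\xi_{2n+1},\dots,\xi_{2N-1}\rangle\otimes H^\ast({\rm Gr}(N,N+i),\QQ)$, as claimed. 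For the weights and types I would use that $\GL_N/G_d$ is homogeneous under the linear algebraic group $\GL_N$, so its rational cohomology is of Hodge--Tate type and the primitive generators $\xi_{2j-1}$ in degree $2j-1$ carry weight $2j$ and type $(j,j)$, exactly as the invariant classes of $\GL_N$; since the retraction, the Chern classes of the tautological bundle on the Grassmannian, and the transgressions are all morphisms of mixed Hodge structures, the Leray--Hirsch isomorphism is one of mixed Hodge structures and the stated weights and types follow.

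The hard part will be the fibre computation: establishing the regular--sequence property of the $\mathrm{Sym}^d$-Chern classes and thereby the collapse of the Koszul complex to an exterior algebra on precisely the generators $\xi_{2n+1},\dots,\xi_{2N-1}$, together with the bookkeeping that pins the weight $2j$ and type $(j,j)$ onto $\xi_{2j-1}$. Once these are in place the Leray--Hirsch step, and the compatibility with the mixed Hodge structure, are formal.
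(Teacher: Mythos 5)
Your computation of the fibre cohomology $H^\ast(\kt_0,\QQ)$ is a genuinely different route from the paper's: the paper runs Leray--Hirsch on $\PGL(S^dV)\to\kt_0$ with fibre $\PGL(V)$ and checks surjectivity of the restriction by passing to classifying spaces, while you compute ${\rm Tor}_{H^\ast({\rm B}\GL_N)}(\QQ,H^\ast({\rm B}(\GL_n/\mu_d)))$ via a Koszul resolution. Both reduce to the same key input, that $c_j(S^d{\cal V})$ has non-zero component on the indecomposable generator $\bar c_j$ for $j\le n$. Your justification of this via ``the coefficient of $x_1^j$'' is not quite a proof, since that linear functional does not annihilate decomposables (already $e_1^2$ contains $x_1^2$), but the fact is standard and the paper is equally brief here; for $i=0$ your argument is sound.

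The globalization step, however, contains a genuine gap, and it is exactly the step that cannot be repaired. You assert that ``the construction provides a retraction $\kt_i\to\kt_0$'' and pull back the $\xi_{2j-1}$ along it. No global retraction is constructed anywhere: the paper only produces one over a standard affine chart $U\subset{\rm Gr}(N,N+i)$, cf.\ Remark \ref{rem:contr}, and a class defined on $p_i^{-1}(U)$ need not extend across the divisor $p_i^{-1}(Z(s))$. In fact no retraction, even a topological one, can exist, because the restriction $H^\ast(\kt_i,\QQ)\to H^\ast(\kt_0,\QQ)$ fails to be surjective for $i>0$. The fibration $\kt_0\to\kt_i\to{\rm Gr}(N,N+i)$ is pulled back from the universal fibration $\GL_N/(\GL_n/\mu_d)\to{\rm B}(\GL_n/\mu_d)\to{\rm B}\GL_N$ along the classifying map of the tautological subbundle ${\cal S}$, so by naturality of the Serre spectral sequence the class $\xi_{2j-1}$ transgresses to the image in $H^{2j}({\rm Gr}(N,N+i),\QQ)$ of the degree-$2j$ relation among the Chern classes of $S^d$ of the standard bundle, and this image is non-zero as soon as the Grassmannian has enough cohomology in degree $2j$. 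Concretely, for $n=d=2$ and $i=1$ one finds $d_6(\xi_5)=\bigl(c_3-\tfrac13c_1c_2+\tfrac{2}{27}c_1^3\bigr)({\cal S})=-\tfrac{20}{27}h^3\ne0$ on ${\rm Gr}(3,4)=\PP^3$, whence $H^5(\kt_1,\QQ)=0$, whereas the asserted formula would give $H^5(\kt_1,\QQ)=\QQ$. The same conclusion follows from observing that $\kt_i$ is the quotient of the $2i$-connected Stiefel variety of injective maps $S^dV\hookrightarrow S^d_iV$ by the free action of $\GL(V)/\mu_d$, so that $H^k(\kt_i,\QQ)\cong H^k({\rm B}(\GL(n)/\mu_d),\QQ)$ for $k\le 2i$, and the latter is concentrated in even degrees. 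So the step you describe as formal is where the argument breaks down. (For what it is worth, the paper's own weight argument has the same blind spot: it does not exclude the transgression $d_{2j}\colon E_{2j}^{0,2j-1}\to E_{2j}^{2j,0}$, because $H^{2j}({\rm Gr}_i,\QQ)\otimes H^0(\kt_0,\QQ)$ has weight exactly $2j$ rather than $\ge 2j+1$.)
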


Pulling back the classes $\xi_{2j-1}$ under the classifying morphism $\psi\colon X\to \kt_i$ in Theorem \ref{thm1}
leads to classes in $\psi^\ast\xi_{2j-1}\in H^{2j-1}(X,\QQ)$ which we view as obstructions to extend the Brauer--Severi variety
$P\to X$. More precisely, if $P\to X$ extends to a Brauer--Severi variety $\bar P\to\bar X$
over a smooth complex projective variety $\bar X$, then $\psi^\ast(\xi_{2j-1})=0$, cf.\ Remark \ref{rem:obs}. Ultimately, the computation describes the cohomology of the classifying space of the group that governs Brauer--Severi varieties of relative dimension $n-1$ with a fixed relative $\ko(d)$,
cf.\ Remarks \ref{rem:AutO(d)} \& \ref{rem:class}. However, for geometric considerations the finite approximations $\kt_i$ are crucial.

\smallskip
The idea to use universal structures in the algebraic theory of central division algebras is not new,
but the geometric emphasis on Brauer--Severi varieties is. For example, Amitsur \cite{Amitsur} constructed the universal division algebra ${\rm UD}(n,k)$ of index $n$ 
over a certain field extension of $k$. The notion has been studied intensely in the algebra literature,
see \cite{AuelBr} for a recent survey. At least morally, there is a link between the universal
Brauer--Severi variety constructed here and ${\rm UD}(n,k)$. More precisely,
the division algebra associated to the generic fibre $\kp_\eta$ over the function field
$K(\kt)$ is obtained by specialisation of ${\rm UD}(n,k)$. However, we are not aware of a geometric
meaningful interpretation of this link. Variants of the universal division algebra taking  into account
also the period have been introduced and studied by Saltman \cite{SaltIndecomp,SaltmanBrauer}, see also \cite{Tignol}.
The construction of $\kp\to\kt$ lends itself naturally to global considerations.
 For example, the $B_i$, $i\gg0$, admit compactifications by projective varieties with a boundary of arbitrarily high codimension which suggests the use of cohomological methods. Theorem \ref{thm2} is a first step in this direction and we plan to come back to this in a future paper.
 In another direction, various papers in the literature, e.g.\ \cite{first} construct $G$-torsors with similar versality properties for various algebraic groups $G$ (see Remark \ref{rem:first}). \smallskip

To demonstrate the usefulness of the notion of universal Brauer--Severi varieties for geometric
problems, we revisit  in \S\! \ref{sec:dJS} work of de Jong and Starr. Their article \cite{dJS}
was in fact the starting point for this paper. To provide some background, recall that the period $\per(\alpha)$ 
of a Brauer class $\alpha$ is simply its order and the index $\ind(\alpha)$ is the degree of the 
central division algebra representing $\alpha$ (at the generic point). It is known classically that
$\per(\alpha)\mid\ind(\alpha)\mid\per(\alpha)^{e_\alpha}$ for some $e_\alpha$ and Colliot-Th\'el\`ene
 \cite{CT} conjectured that one can choose $e_\alpha=\dim(X)-1$ for all $\alpha\in\Br(K(X))$ with $X$ a variety
over an algebraically closed field. The conjecture has been proved for unramified classes on
projective surfaces by de Jong \cite{dJ} with further contributions by de Jong and Starr \cite{dJS}
and Lieblich \cite{Lieb2,Lieb1}. Despite some recent results for unramified classes \cite{dJP,HM}, the conjecture is essentially completely open in higher dimensions. However, de Jong and Starr managed to prove that once the conjecture is settled for unramified classes,
so for $\alpha\in\Br(X)$ with $X$ projective, it also holds for all ramified classes $\alpha\in\Br(K(X))$.
Their result, not explicitly stated in \cite{dJS}, runs under the name of `discriminant avoidance' and
will be recalled as Theorem \ref{thm}. We provide a proof that roughly follows the original arguments
in \cite{dJS} but in the language of universal Brauer--Severi varieties.\smallskip

\medskip

\noindent
{\bf Acknowledgement:} The text grew out of a talk in a seminar on Brauer groups in  2025. The original goal was to present the discriminant avoidance
result of de Jong and Starr \cite{dJS}. Looking for alternative ways to present the material, we came
up with the notion of the universal Brauer--Severi variety which not only sheds a different light
on the results in \cite{dJS} but turned out to be of independent interest.
We wish to thank the participants of the seminar for interesting questions and feedback.
Special thanks to Giacomo Mezzedimi for the organisation of the seminar. We are also grateful
to Nicolas Perrin for answering a naive question on reductive group quotients, to Asher Auel for helpful comments on a preliminary version and his generous help with the literature, and to Uriya First for bringing \cite{first} to our attention and for a
helpful conversation.


\tableofcontents
\section{Universal Brauer--Severi varieties}
In this section we introduce the universal quasi-projective Brauer--Severi variety $Q\to B$ of index $n$ and period $d$. In fact, we construct a series $Q_i\to B_i$, $i\geq 0$, of universal Brauer--Severi varieties contained
in each other and one should think of the limit $Q_\infty=\lim Q_i\to B_\infty=\lim B_i$ as the infinite-dimensional universal
Brauer--Severi variety. The distinguishing property of $Q_i\to B_i$ for large $i$ is the existence
of a compactification by a projective variety with a boundary of arbitrarily large codimension. Furthermore,
various (cohomological) invariants will turn out to stabilise for large $i$, see \S\! \ref{sec:Coh}.
\smallskip

 The construction is motivated by a result of de Jong and Starr \cite[Prop.\ 2.5.1]{dJS} for general reductive groups. Our construction only deals with Brauer--Severi varieties, so the group $\PGL$,
 but is more  concrete and potentially more transparent from a geometric point of view.

\subsection{Preparations: period and index}

Assume $\pi\colon P\to X$ is a Brauer--Severi variety of relative dimension $n-1$ over a smooth quasi-projective variety $X$ over a 
field $k$ and let  $\alpha=[P]\in \Br(X)$ be its Brauer class.\smallskip

 For the convenience of the reader, we recall two well-known facts
that allow for a geometric way of computing the period and the index
of a Brauer class. We start with the period, cf.\ \cite[\S\! 2]{Artin}.

\begin{lem}\label{lem:Or}
The period $\per(\alpha)$ divides an integer $d$ if and only if there exists a line bundle $\ko(d)$
on $P$ which when restricted to every geometric fibre $P_x\cong\PP^{n-1}$  gives
$\ko(d)$. In other words,
$$\per(\alpha)=\min\{\,d\mid \exists\,\, 
 \ko(d)\in\Pic(P)\,\}.$$
\end{lem}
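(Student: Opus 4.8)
The plan is to extract the period directly from the Leray spectral sequence for the structure morphism $\pi\colon P\longrightarrow X$ in the \'etale topology with coefficients in $\GG_m$; the entire statement drops out of its low-degree (five-term) exact sequence once the relevant sheaves and maps have been identified.

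First I would compute the low-degree terms. As $\pi$ is proper with geometrically connected fibres, $\pi_\ast\GG_m=\GG_m$. Since $P$ is \'etale-locally on $X$ isomorphic to $\PP^{n-1}_X$, the relative Picard sheaf $R^1\pi_\ast\GG_m=\Pic_{P/X}$ is \'etale-locally the constant sheaf $\ZZ$; moreover $\PGL_n$ acts trivially on $\Pic(\PP^{n-1})\cong\ZZ$, as automorphisms preserve the ample generator, so this identification is canonical and $R^1\pi_\ast\GG_m\cong\ZZ$ globally. Feeding these into the five-term sequence, and using $H^1(X,\GG_m)=\Pic(X)$ together with $H^2(X,\GG_m)=\Br(X)$ (which agrees with the Azumaya Brauer group since $X$ is smooth and quasi-projective), yields, for $X$ connected,
$$0\longrightarrow\Pic(X)\xrightarrow{\pi^\ast}\Pic(P)\xrightarrow{\ r\ }\ZZ\xrightarrow{\ \delta\ }\Br(X).$$

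The next step is to identify the two outer maps. The map $r$ sends a line bundle $L$ on $P$ to the integer $e$ with $L|_{P_x}\cong\ko(e)$ on each geometric fibre $P_x\cong\PP^{n-1}$, being the composite of $\Pic(P)\longrightarrow H^0(X,\Pic_{P/X})$ with the identification above. The \emph{main obstacle} is to show that the connecting map $\delta$ sends the canonical generator $1\in\ZZ$ to the Brauer class $\alpha=[P]$ (up to sign): this is the description of $\alpha$ as the obstruction to lifting the tautological section of $\Pic_{P/X}$ to an honest line bundle on $P$, and it should match the boundary map $H^1(X,\PGL_n)\longrightarrow H^2(X,\GG_m)$ attached to $1\longrightarrow\GG_m\longrightarrow\GL_n\longrightarrow\PGL_n\longrightarrow1$, under which the cocycle defining $P$ maps to $\alpha$. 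I would verify this either by a \v{C}ech computation over an \'etale cover trivialising $P$, or by invoking the compatibility of the two spectral sequences.

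Granting $\delta(1)=\alpha$, the lemma follows formally: by additivity $\delta(e)=e\alpha$, so a line bundle restricting to $\ko(d)$ on the fibres exists exactly when $d$ lies in the image of $r$, which by exactness equals $\ker\delta=\{\,e\mid e\alpha=0\,\}$. Hence such an $\ko(d)$ exists if and only if $d\alpha=0$, i.e.\ if and only if $\per(\alpha)\mid d$, and taking the least positive such $d$ gives the displayed formula for $\per(\alpha)$.
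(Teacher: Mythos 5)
Your proof is correct, but it takes a genuinely different route from the one in the paper. The paper argues via twisted sheaves: writing $P=\PP(E)$ for an $\alpha$-twisted locally free sheaf $E$, the direct image of a relative $\ko(d)$ is untwisted and agrees, up to a line bundle, with $(S^dE)^\vee$, so its existence forces $\alpha^d=1$; conversely a cocycle for $\alpha$ with $\alpha_{ijk}^d=1$ glues the local $\ko(d)$'s to an honest line bundle. Your argument instead runs the five-term exact sequence of the Leray spectral sequence for $\pi$ with $\GG_m$-coefficients, $0\to\Pic(X)\to\Pic(P)\to\ZZ\to\Br(X)$, and reduces everything to the identification $\delta(1)=\pm\alpha$. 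This is essentially the classical argument of Artin (the reference the paper itself points to before stating the lemma), and it is also morally the content of the paper's own Remark \ref{rem:AutO(d)}, there phrased via the nonabelian boundary map for $1\to\GG_m\to\Aut(\PP^{n-1},\ko(d))\to\PGL(n)\to 1$. What your approach buys is the full exact sequence, which simultaneously recovers $\Pic(P)\cong\Pic(X)\oplus\ZZ$ (used later in the proof of Proposition \ref{prop:Picard}); what the paper's approach buys is a proof that stays inside the twisted-sheaf formalism and makes the two implications symmetric and explicit. The one place where you defer real content is the claim $\delta(1)=\alpha$: that is precisely where the paper's ``conversely'' step does its work, and while it is standard (a \v{C}ech computation over a trivialising \'etale cover, or \cite[Thm.\ 3.5.5 ff.]{CTS}-type compatibilities), your proof is not complete until it is carried out; as written you correctly flag it rather than prove it.
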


\begin{proof}
For example, one can argue using twisted sheaves. If $P$ is viewed as the
projectivisation of  an $\alpha$-twisted locally free sheaf $E$, then the direct image of a line bundle $\ko(d)$ 
is untwisted and, up to tensoring with line bundles, isomorphic to the dual of the symmetric
product $S^dE$. Hence,  $S^dE$ is untwisted and, therefore, $\alpha^d=1$. Conversely,
combining the fact that $\ko(1)$ always exists as a $\pi^\ast\alpha$-twisted sheaf and 
that $\alpha$ can be represented by a cocycle with $\alpha_{ijk}^d=1$ for $d=\per(\alpha)$,
we find that $\ko(d)$ always exists as an untwisted sheaf.
\end{proof}

Note that the dual of the relative canonical bundle $\omega_\pi$ is fibrewise  just $\ko(-n)$,
which implies the well-known divisibility $\per(\alpha)\mid n$ and, hence,  $\per(\alpha)\mid \ind(\alpha)$.\smallskip

\begin{remark}\label{rem:AutO(d)}
The cohomological version of the lemma uses the short
exact sequence of sheaves in the \'etale topology (or fppf when the characteristic divides $n$):
\begin{equation}\label{eqn:GmGLPGL}
\xymatrix@R=0.1pt{1\ar[r]& \GG_m\ar[r]&\Aut(\PP^{n-1},\ko(d))\ar[r]& \Aut(\PP^{n-1})\ar[r]& 1.\\
}
\end{equation}
Here, $ \Aut(\PP^{n-1})\cong\PGL(n)$ and $\Aut(\PP^{n-1},\ko(d))$ is the group of
automorphisms of the line bundle $\ko(d)$ over $\PP^{n-1}$. For $d=1$ the latter is simply
$\GL(n)$ and (\ref{eqn:GmGLPGL}) is the short exact sequence defining $\PGL(n)$. For $d>1$ one
finds $\Aut(\PP^{n-1},\ko(d))\cong\GL(n)\,/\, \mu_d$ and the special case $d=n$ leads to
the split group $\Aut(\PP^{n-1},\ko(n))\cong \GG_m\times\PGL(n)$. To see this, use that any automorphism of $\PP^{n-1}$ lifts naturally to an  automorphism of the line bundle $\ko(n)\cong\omega^\ast_{\PP^{n-1}}$ or
that $\det\colon \GL(n)\to \GG_m$ induces a splitting of $\GG_m\cong\GG_m\,/\,\mu_d\,\hookrightarrow \GL(n)\,/\,\mu_d$.
Cohomological properties of $\SL(n)\,/\,\mu_d$ have been investigated in the past, see e.g.\ \cite[\S\! 4]{BR}.\smallskip

Furthermore, sequence (\ref{eqn:GmGLPGL}) induces  the exact cohomology sequence
$$\xymatrix{H^1(X,\Aut(\PP^{n-1},\ko(d)))\ar[r]& H^1(X,\PGL(n))\ar[r]^-\delta&H^2(X,\GG_m)\ar[r]&}$$
and we have that  $\delta[P]=d\cdot\alpha$, cf.\ \cite[Ex.\ 1.1]{Merk2}.
\end{remark}

The Brauer--Severi variety $P\to X$ restricted to any open subset $U\subset X$ defines a Brauer--Severi variety $P_U\to U$ over $U$. Similarly, restricting to the generic point $\eta=\Spec(K(X))\in X$ defines a Brauer--Severi variety
$P_\eta$ over the function field $K(X)$ of $X$. This induces the
natural homomorphism $\Br(X)\to\Br(K(X))$, $\alpha\mapsto\alpha_\eta$, which is injective for locally factorial $X$, cf.\ \cite[Thm.\ 3.5.5]{CTS}. Of course, every class in $\Br(K(X))$ is realised by a Brauer class
and by a Brauer--Severi variety on some open subset $U\subset X$ and, in order to argue geometrically, we often fix such a realisation.

\medskip

Note that in general the index $\ind(\alpha)$ of a Brauer class $\alpha\in \Br(X)$ is not realised as the degree $\deg(\ka)$ of an Azumaya algebra $\ka$ representing $\alpha$, see \cite{AW}. Equivalently, $\ind(\alpha)$ can typically not be computed as $\dim(P\to X)+1$ of any single  Brauer--Severi variety $P\to X$  representing $\alpha$. This is different for Brauer classes over the function field. Indeed,
 the index of a Brauer class over a field is the degree of the unique central division algebra realising the class. Also recall that period and index behave well under the restriction morphisms, i.e.\
$\ind(\alpha)=\ind( \alpha_U)=\ind(\alpha_\eta)$.\smallskip

For the proof of the following we refer to \cite[\S\! 3]{Artin} or \cite[\S\! 8]{Kollar}.

\begin{lem} 
Assume $\alpha\in \Br(K)$ is a Brauer class over a field $K$ represented by a Brauer--Severi variety $P$ over $K$. Then
$$\ind(\alpha)=\min\{\,\dim(P')+1\mid P'\subset P\text{ linear}\,\}.$$
\vskip-0.8cm\qed
\end{lem}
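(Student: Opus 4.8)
The plan is to translate the minimisation over linear subvarieties into an algebraic computation on one-sided ideals. Write $P=\mathrm{SB}(A)$ for the central simple $K$-algebra $A$ of degree $n$ with $[A]=\alpha$, so that $\dim P=n-1$. First I would record the standard dictionary between linear subvarieties and ideals: over a splitting field $L$ one has $P_L\cong\PP^{n-1}=\PP(V)$ with $\dim_L V=n$, and under $A_L\cong\End_L(V)$ the minimal left ideals are parametrised by the points of $\PP(V)$, while a left ideal of reduced dimension $s$ (that is, of $L$-dimension $sn$) cuts out a linear subspace $\PP^{s-1}\subset\PP(V)$. This assignment is equivariant for the Galois action, so by descent the $K$-rational linear subvarieties $P'\subset P$ of dimension $s-1$ correspond precisely to the left ideals $I\subset A$ of reduced dimension $s$; this is the content of \cite[\S 3]{Artin} or \cite[\S 8]{Kollar}.

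Granting this, the lemma reduces to determining which reduced dimensions are realised by nonzero left ideals of $A$. Here I would invoke Wedderburn: writing $A\cong M_r(D)$ with $D$ the division algebra of degree $m$ in the class $\alpha$, so that $m=\ind(\alpha)$ and $n=rm$, the regular left module $A$ is a direct sum of $r$ copies of the simple left module $S$ (a minimal left ideal, a ``column''), and $S$ has reduced dimension $r m^2/(rm)=m$. By semisimplicity every left submodule of $A$ is isomorphic to $S^{\oplus t}$ for some $0\le t\le r$, hence has reduced dimension $tm$. Thus the reduced dimensions of nonzero left ideals are exactly $m,2m,\dots,rm$, and the least positive one is $m$.

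Combining the two steps, a $K$-linear subvariety $P'\subset P$ of dimension $s-1$ exists if and only if $m\mid s$ and $0<s\le n$, so $\dim P'+1$ ranges over the positive multiples of $m=\ind(\alpha)$. The minimum is attained at $s=m$, realised by the linear $\PP^{m-1}\subset P$ coming from a single simple summand $S$, which gives $\min\{\dim P'+1\}=m=\ind(\alpha)$, as claimed.

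I expect the only genuine obstacle to be the first step, namely checking that the bijection between linear subvarieties and one-sided ideals is defined over $K$ rather than merely over the splitting field; the subtlety is that $V$ and its subspaces are not individually $K$-rational, only their associated ideals of $A$ are, so the argument must be phrased through Galois descent (equivalently, through the functor of points of $P$ on $K$-algebras). The input for the second step is then the routine Wedderburn decomposition, and working directly with reduced dimensions is what makes the bound clean: since the reduced dimension, unlike the Brauer class itself, does not distinguish $\alpha$ from $\alpha^{-1}$ or from its other powers, no orientation or sign bookkeeping for $[P']$ is needed.
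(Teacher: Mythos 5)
Your proof is correct, and it is essentially the argument the paper itself points to: the paper gives no proof of this lemma, referring instead to \cite[\S\! 3]{Artin} and \cite[\S\! 8]{Kollar}, whose content is exactly your two steps --- the Galois-descent dictionary between $K$-rational linear subvarieties of $P$ and one-sided ideals of the underlying central simple algebra $A$, followed by the Wedderburn computation showing that the reduced dimensions of nonzero ideals of $M_r(D)$ are precisely the positive multiples of $\deg D=\ind(\alpha)$. The only point to watch is the left/right ideal convention (whether a subspace $W\subset V$ corresponds to $\{f:\im f\subseteq W\}$ as a right or left ideal of $\End(V)$ depends on the chosen identification $A_L\cong\End(V)$ or its opposite), but this is immaterial to the conclusion.
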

The minimum on the right hand side is taken over all $K$-rational linear subspaces. For example, if $P$ admits a rational point, which we may view as a zero-dimensional linear subspace $P'\subset P$, then
$\ind(\alpha)=1$, i.e.\ $\alpha\in \Br(K)$ is in fact trivial. Also, by definition of the index, we know that $\ind(\alpha)\mid \dim(P)+1$, which restricts the possible dimensions of linear subspaces. For example, $P$ does not contain a codimension 
one linear subspace unless $\alpha$ is trivial, which fits well with Lemma
\ref{lem:Or}.

\begin{remark}
For a Brauer--Severi variety $P\to X$ over a locally factorial variety $X$ the lemma can be rephrased geometrically by saying that the index is the minimum of all $n'$ such that the relative Grassmannian $\text{Gr}(n'-1,P)\to X$ admits a rational section, something we will explore in \S\! \ref{sec:dJS}.
\end{remark}


\subsection{Construction}Let $V$ be a vector space of dimension $n$ over a field $k$ and let
$d$  and $i$ be non-negative integers. In the applications later, we will assume that $d$ divides $n$ and 
that $n\mid d^e$ for some exponent $e$. One could also just avoid choosing $d$ by setting $d=n$, but the results would be less precise.\smallskip

We introduce the vector spaces $S^d_iV\coloneqq S^dV\oplus k^{\oplus i}$
which are of dimension $N_i=N_i(d,n)={n+d-1\choose n-1}+i$. We often simply write $N=N_0$.
The natural inclusions 
\begin{equation}\label{eqn:SdVi}
S^dV=S^d_0V\subset S^d_1V\subset\cdots
\end{equation}
are all of codimension one and we will write
$$\nu_{d,i}\colon \PP(V)\,\hookrightarrow \PP(S^dV)\,\hookrightarrow \PP(S^d_iV)$$
for the induced composition with the $d$-th Veronese embedding $\nu_d=\nu_{d,0}$. We recommend to restrict to the case $i=0$ during a first reading.\smallskip

The image of each of the $\nu_{d,i}$ defines a $k$-rational point
$$[\im(\nu_{d,i})]\in\kh_{d,i}\coloneqq{\rm Hilb}(\PP(S^d_iV))$$
in the Hilbert scheme of closed subschemes of $\PP(S^d_iV)$.\smallskip

The Hilbert scheme $\kh_{d,i}$ comes with the natural action of $\PGL(S^d_iV)\cong\Aut(\PP(S^d_iV))$.
The stabiliser of this action at the point $[\im(\nu_{d,i})]\in \kh_{d,i}$ shall be denoted
 $\PGL(V,i)$. In other words, $\PGL(V,i)\subset \PGL(S^d_iV)$ is the subgroup of all elements that restrict to an automorphism of $\PP(S^dV)$, which then is induced by an element in
 $\PGL(V)$  via the natural embedding $\PGL(V)\,\hookrightarrow \PGL(S^dV)$.
 In concrete terms,  $\PGL(V,i)$ is the quotient $\GL(V,i)/k^\ast$ with
 $$ \GL(V,i)=\left(\begin{matrix}\GL(V)&\ast\\
0&\GL(k^{\oplus i})\end{matrix}\right).$$

  Hence, the orbit $\kt_i\subset \kh_{d,i}$ through $[\im(\nu_{d,i})]\in \kh_{d,i}$
 is isomorphic to the quotient 
 \begin{eqnarray}\label{eqn:kt}\kt_i\coloneqq\kt_i(d,n)&\coloneqq&\PGL(S^d_iV)\cdot[\im(\nu_{d,i})]\\
 &\cong&\PGL(S^d_iV)\,/\,\PGL(V,i)\cong\GL(S^d_iV)\,/\,\GL(V,i).\nonumber
 \end{eqnarray}
 Note that the last quotient is not a quotient by a subgroup, for 
 $\GL(V)\to\GL(S^dV_i)$ has a non-trivial kernel isomorphic to $\mu_d$.
The description of $\kt_i$ as a quotient allows one to easily compute its dimension as
 \begin{equation}
 \dim\kt_i=N^2+iN-n^2,
 \end{equation}
which only depends on the `index' $n$, the `period' $d$, and the additional integer $i$.
\smallskip

It will turn out that certain features distinguish $$\kt_0\cong\GL(S^dV)\,/\,\GL(V)$$ from the higher $\kt_i$, $i>0$. For one, $\kt_0$ is affine, see  \S\! \ref{sec:comp} below, while the higher ones $\kt_1,\kt_2,\ldots$ are not.
\smallskip

By construction, $\kt_i\subset\kh_{d,i}$ is the open subset of the Hilbert scheme that parametrises all closed subschemes of $\PP(S^d_iV)$
that are projective equivalent to $\im(\nu_{d,i})\subset\PP(S^d_iV)$ over the algebraic closure of $k$.
The universal family of those is obtained by restricting the universal subscheme $\kz\subset
\kh_{d,i}\times\PP(S^d_iV)$ to $\kt_i\subset\kh_{d,i}$. We call it the \emph{universal Brauer--Severi variety}
$$\pi_i\colon\kp_i\coloneqq\kz_{\kt_i}\to\kt_i.$$

As a homogeneous space, $\kp_i$ is described as the quotient
\begin{equation}\label{eqn:kphomog}
\kp_i\cong\PGL(S^d_iV)\,/\,\PGL(V,i)_x\cong\GL(S_i^dV)\,/\,\GL(V,i)_x,
\end{equation}
and as a homogeneous bundle it can be written as
$$\kp_i\cong\PGL(S^d_iV)\times_{\PGL(V)}\PP(V).$$
Here, $\PGL(V,i)_x\subset\PGL(V,i)$ is the stabiliser of a point $x\in \PP(V)$ of the natural action of $\PGL(V,i)$ on $\PP(V)$ and, hence, $\PGL(V,i)_x\cong\GL(V,i)_x\,/\,k^\ast $ with
\begin{equation}\label{eqn:GLVx}
\GL(V,i)_x\cong\left(\begin{matrix}\left(\begin{matrix}k^\ast&\ast\\0&\GL(n-1)\end{matrix}\right)&\ast\\0&\GL(k^{\oplus i})\end{matrix}\right).
\end{equation}

Note that by construction, the universal Brauer--Severi variety $\kp_i$ is homogeneous with respect to the action of $\PGL(S^d_iV)$ and the projection $\kp_i\to\kt_i$ is equivariant.\smallskip

The closed embeddings  $\PP(S^d_iV)\subset \PP(S^d_{i+1}V)$ induce closed embeddings $\kt_i\subset\kt_{i+1}$ such that the restriction of $\kp_{i+1}|_{\kt_i}$ gives back $\kp_i$. In other words, there is a commutative diagram
$$\xymatrix{\kp_0\ar[d]_{\pi_0}\ar@{^(->}[r]&\kp_1\ar[d]_{\pi_1}\ar@{^(->}[r]&\ldots\ar@{^(->}[r]&\kp_i\ar[d]_{\pi_i}\ar@{^(->}[r]&\kp_{i+1}\ar[d]_{\pi_{i+1}}\ar@{^(->}[r]&\\
\kt_0\ar@{^(->}[r]&\kt_1\ar@{^(->}[r]&\ldots\ar@{^(->}[r]&\kt_i\ar@{^(->}[r]&\kt_{i+1}\ar@{^(->}[r]&}$$

\subsection{Compactifications}\label{sec:comp}
For $i=0$ the base $\kt_0$ is the quotient of the affine algebraic group
$\PGL(S^dV)$ by the reductive group $\PGL(V)$ and hence affine. In particular, any compactification
$\kt_0\subset\bar\kt_0$ by a projective variety $\bar\kt_0$ has at least one boundary component of codimension one. However, as  $i$ grows the situation changes.
To make this precise, we use arguments that go back to
 \cite[Rem.\ 1.4]{Totaro}, but see also \cite[Lem.\ 9.2]{CTSa}, \cite[\S\! 2.4]{dJS}, and \cite{EG}. Our original motivation comes from trying to understand \cite[Prop.\ 2.5.1]{dJS} 
in concrete geometric terms. Our varieties $\kt_i$ are the analogue of the smooth atlas $X\to{\rm BPGL}$ constructed by de Jong and Starr which comes with a small compactification $\bar X$ of $X$, such that every  $\Spec(K)\to{\rm BPGL}$  lifts to $X$.

\begin{prop}\label{prop:Dim} Let $c>0$ be a fixed integer and choose $i$ such that
$i\geq c-1$. Then, $\kt_i\cong\PGL(S^d_iV)\,/\,\PGL(V,i)$ admits
a projective compactification  all of whose boundary components
are of codimension $\geq c$.
\end{prop}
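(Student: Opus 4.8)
The plan is to realise $\kt_i$ concretely as a free quotient of an open subset of a projective space by $\PGL(V)$, so that the boundary of the corresponding GIT quotient is controlled by a determinantal degeneracy locus of the expected codimension $i+1$; this is the hands-on analogue of the smooth atlas of de Jong--Starr.

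First I would set $W\coloneqq\Hom(S^dV,S^d_iV)$ and let $U\subset W$ be the open locus of injective maps. Here $\SL(V)$ acts linearly on $W$ through the $d$-th symmetric power $S^d\colon\SL(V)\to\GL(S^dV)$ by precomposition, commuting with the left $\GL(S^d_iV)$-action; its centre acts trivially on $\PP(W)$, so its orbits coincide with those of $\PGL(V)$. Sending an injective $\phi$ to the Veronese subvariety $\phi(\nu_d(\PP(V)))\subset\PP(\phi(S^dV))\subset\PP(S^d_iV)$ defines a $\PGL(V)$-invariant, $\GL(S^d_iV)$-equivariant morphism $\PP(U)\to\kt_i$ which identifies $\kt_i$ with the geometric quotient $\PP(U)/\PGL(V)$. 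One checks here that $S^d(g)$ is scalar only for scalar $g$, so that $\PGL(V)$ acts freely on $\PP(U)$ and the stabiliser matches the description (\ref{eqn:kt}). The crucial numerical input is that the complement $\PP(W)\smallsetminus\PP(U)=\PP(Z)$, with $Z$ the locus of maps of rank $\leq N-1$, is a determinantal variety of codimension $(N_i-N+1)(N-N+1)=i+1$ in $\PP(W)$.

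Next I would compactify by the GIT quotient $\overline{\kt_i}\coloneqq\PP(W)^{\rm ss}/\!\!/\SL(V)$ for the canonical linearisation on $\ko(1)$, which is projective since $\PP(W)$ is projective and $\SL(V)$ is reductive. A Hilbert--Mumford computation shows that every injective $\phi$ is stable: for a nontrivial one-parameter subgroup $\lambda$ of $\SL(V)$ the weights of $\lambda$ on $S^dV$ range over an interval with a strictly positive and a strictly negative end, and since $\phi$ is nonzero on every $\lambda$-weight space its own weights attain both signs, which forces stability. Hence $\PP(U)\subset\PP(W)^{\rm s}$ and $\kt_i=\PP(U)/\PGL(V)$ is an open subvariety of the irreducible projective variety $\overline{\kt_i}$, which is therefore a projective compactification.

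It then remains to bound the boundary $\overline{\kt_i}\smallsetminus\kt_i=q(\PP(W)^{\rm ss}\cap\PP(Z))$ by codimension $\geq i+1\geq c$, where $q\colon\PP(W)^{\rm ss}\to\overline{\kt_i}$ is the quotient map. On the stable part $\PP(W)^{\rm s}\cap\PP(Z)$ all orbits have the full dimension of $\PGL(V)$, so $q$ preserves codimension there and this contribution has codimension $\geq i+1$. The main obstacle is the strictly semistable locus $\PP(W)^{\rm ss}\smallsetminus\PP(W)^{\rm s}$, which for $d\geq 2$ is nonempty and consists of non-injective maps whose orbits may have less than maximal dimension, so that a priori $q$ could drop the codimension of their image below $i+1$. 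I expect this to be the crux: I would control it by running through the Hesselink--Kirwan--Ness stratification of $\PP(W)^{\rm ss}$ (equivalently, the Luna stratification of $\overline{\kt_i}$), where each stratum is indexed by a destabilising one-parameter subgroup and its image in $\overline{\kt_i}$ is a GIT quotient by the associated Levi subgroup, and check stratum by stratum that the loss of orbit dimension is always more than compensated by the codimension already present inside the determinantal locus $\PP(Z)$. Since the proposition asks only for a projective, and not a smooth, compactification, no equivariant resolution is needed and this dimension count suffices.
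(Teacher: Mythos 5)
Your construction is the same as the paper's: in the paper's notation you realise $\kt_i$ as the quotient of the injective locus $U_N\subset U=\Hom(S^dV,S^d_iV)$ (your $\PP(U)\subset\PP(W)$) by $\GL(V)$ (resp.\ $\PGL(V)$) and compactify by the GIT quotient of the ambient linear (resp.\ projective) space. Your Hilbert--Mumford verification that injective maps are stable is correct and is in fact a genuine plus: the paper only records the inclusion $U_N\subseteq U^{\rm ss}$ as an unproved lemma with a footnote. The gap is exactly where you flag it. The codimension bound for the image of the non-injective semistable locus is the decisive estimate, and you only announce that you ``would check stratum by stratum'' via the Hesselink--Kirwan--Ness stratification without carrying out any computation; without it the proposition is not proved. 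Note moreover that some orbit-dimension correction is genuinely needed and not only on the strictly semistable part: for $n=d=2$, $i=1$ the rank-$2$ degeneracy stratum has dimension $10$ while $\dim\kt_1=8$, so the crude bound ``image has dimension at most that of the stratum'' is useless there, and one must actually exhibit large orbits inside the degeneracy locus.

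The paper closes this with an argument considerably lighter than the Kirwan--Ness machinery, and you may want to replace your plan by it. Stratify the non-injective locus by rank, $U_\ell=\{\psi\mid\rk(\psi)=\ell\}$ for $0\le\ell\le N-1$. Each $U_\ell$ is irreducible of dimension $\ell(N+N_i-\ell)$, so that $\dim U-\dim U_\ell=(N-\ell)(N_i-\ell)\ge N_i-N+1=i+1$. For $\ell\ge n$ one exhibits a point of $U_\ell$ with finite $\GL(V)$-stabiliser (e.g.\ $\psi(v_j^d)=v_j^d$ for a basis $v_1,\dots,v_n$ of $V$, whose stabiliser is contained in $\mu_d^n$); hence the generic orbit in $U_\ell$ has dimension $n^2$ and the image of $U_\ell^{\rm ss}$ in the quotient has dimension at most $\dim U_\ell-n^2=\dim\kt_i-(N-\ell)(N_i-\ell)$, i.e.\ codimension at least $i+1$. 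For $\ell<n$ the stratum is small enough that no orbit correction is needed. The point is that the stable/strictly-semistable dichotomy is irrelevant here: since the image of an irreducible $G$-invariant constructible set under the quotient map has dimension at most its dimension minus that of its generic orbit, all one needs is the generic stabiliser dimension on each irreducible rank stratum, which the explicit points above provide. With that count inserted, your proof is complete and matches the paper's.
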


\begin{proof} The key is to reinterpret the quotient (\ref{eqn:kt}) 
as the quotient of a linear space by $\GL(V)$. For this we consider 
the space  $U\coloneqq\Hom(S^dV,S^d_iV)$ of all linear maps
$\psi\colon S^dV\to S^d_iV$ and denote by 
$U_N\subset U$ the open set of all injective maps. Clearly, $U_N$ is  homogeneous under
the natural action of $\GL(S^d_iV)$. The stabiliser of the inclusion
$ S^dV\,\hookrightarrow S^d_iV=S^dV\oplus k^{\oplus i}$ is isomorphic to 
$$H\coloneqq \left(\begin{matrix}{\rm id}_{S^dV} &\ast\\
0&\GL(k^{\oplus i})
\end{matrix}\right)$$
and, therefore, $U_N\cong \GL(S^d_iV)\,/\,H$.
This quotient comes with a further right action of $\GL(S^dV)$ which is given by pre-composition.
We will only need the induced action of the smaller subgroup $\GL(V)\subset\GL(S^dV)$, where we abuse notation and denote by  $\GL(V)$ also its image in $\GL(S^dV)$. Since $\GL(V,i)=H\cdot\GL(V)$, this leads to the alternative description of $\kt_i$ as
$$\kt_i \cong \PGL(S^d_iV)\,/\,\PGL(V,i)\cong 
\GL(S^d_iV)\,/\,\GL(V,i)\cong U_N\,/\,\GL(V).$$
Clearly, the above right action of $\GL(V)$ on $U_N$  extends naturally to a right action
$U\times \GL(V)\to U$, again by pre-composition. The compactification we want to use is provided by the GIT quotient
$U^{\text ss}/\!/\,\GL(V)$.

\begin{lem}\label{lem:injimpliesss}
    We have $U_N\subseteq U^{\text ss}$.
\end{lem}


\TBC{Need reference. The stabiliser in $\GL(S^dV)$ of
a surjection $W\twoheadrightarrow S^dV$ consists of scalars and,
hence, the stabiliser in $\PGL(V)\subset\PGL(S^dV)$ is trivial. To produce an invariant section of some $\ko(a)$, $a\gg0$, non-vanishing at a surjection $\psi\colon W\twoheadrightarrow S^dV$,
take $\bigwedge^N\psi\colon \bigwedge^NW\twoheadrightarrow \bigwedge^NS^dV\cong k$ and evaluate it at an element not contained
in the kernel. This produces an invariant section of $\ko(N)$ non-vanishing at $\psi$.
} Hence, $\kt_i$ can be indeed viewed as an open subset of the projective(!) GIT quotient
\begin{equation}\label{eqn:openemb}
\kt_i\subset \bar\kt_i\coloneqq U^{\text ss}/\!/\,\GL(V)=\PP(U)^{\text ss}/\!/\,\PGL(V).
\end{equation}
To control the dimension of its boundary $\partial \kt_i\subset\bar\kt_i$, one uses the stratification
$U\,\setminus \,U_N=\bigsqcup_{\ell=0}^{N-1}U_\ell$, where the strata are defined as $U_\ell\coloneqq\{\,\psi\mid \rk(\psi)=\ell\,\}$. The dimension of each stratum is, cf.\ \cite[\S\! 7.2.A]{lazarsfeldpositivityvol2}:
\begin{eqnarray*}
\dim(U_\ell)&=&\dim\Hom(S^dV,k^{\oplus \ell})+\dim{\rm Gr}(\ell,S^d_iV)
\\
&=&N\cdot\ell+\ell\cdot(N_i-\ell)=\ell\cdot(2N+i-\ell).
\end{eqnarray*}
We claim that the codimension of the image of $U_\ell^{\rm ss}$ in $\bar\kt_i$ is at least $c$.
For $\ell\leq n$ we have in fact already $\dim(\kt_i)-\dim(U_\ell)\geq i+1\geq c$, which is clearly enough.
For $\ell\geq n$ we first observe that $U_\ell$ always contains a point with finite stabiliser.
Indeed, any $\psi\colon S^dV\to S^d_iV$ with $\psi(v_i^d)=v_i^d$ for $v_1,\ldots ,v_n\in V$ a basis,
has a stabiliser that is contained in $\mu_d^n$ and for $\ell\geq n$ every $U_\ell$ contains a $\psi$ with this property. This implies that the dimension of the image of $U_\ell^{\rm ss}$ in $\bar\kt_i$ can be computed
as $\dim(U_\ell)-n^2$. Thus, it suffices to show that $\dim(\kt_i)-\dim(U_\ell)+n^2\geq i+1\geq c$,
which is easily verified.
\end{proof}


The construction in the above proof, due to Totaro \cite{Totaro}, appears mysterious at first glance, but it can be given a geometric interpretation as follows.
The further quotient by $\GL(S^dV)$ leads to a morphism to the Grassmannian $$\kt_i\cong U_N\,/\,\GL(V)\twoheadrightarrow U_N\,/\,\GL(S^dV)\cong {\rm Gr}(N, S^d_iV),$$ which  maps $[Z]\in \kt_i\subset {\rm Hilb}(\PP(S^d_i))$ to its span $\PP^{N-1}\cong\langle Z\rangle\subset\PP(S^d_iV)$ viewed as a point in ${\rm Gr}_i\coloneqq{\rm Gr}(N, S^d_iV)$. Those fit into a commutative diagram of closed immersions
 $$\xymatrix{\kt_0\ar@{^(->}[r]\ar[d]_{p_0}&\kt_1\ar@{^(->}[r]\ar[d]_{p_1}&\kt_2\ar@{^(->}[r]\ar[d]_{p_2}&\kt_3\ar@{^(->}[r]\ar[d]_{p_3}&\cdots\\
 \text{Spec}(k)\ar@{^(->}[r]&\PP(S^d_1V^\vee)\ar@{^(->}[r]&{\rm Gr}_2\ar@{^(->}[r]&{\rm Gr}_3\ar@{^(->}[r]&\cdots,}$$
 where the embedding ${\rm Gr}_i={\rm Gr}(N,S^d_iV)\,\hookrightarrow {\rm Gr}_{i+1}={\rm Gr}(N,S^d_{i+1}V)$ is obtained
 by viewing an $N$-dimensional subspace $A\subset S^d_iV=S^dV\oplus k^{\oplus i}$ as an $N$-dimensional  subspace of $S^d_{i+1}V=S^dV\oplus k^{\oplus i}\oplus k$.

\begin{ex}\label{ex:codim2}
As a special case, we mention that $\kt_i$ admits a compactification with a boundary
of codimension at least two as soon as  $i\geq1$.
Note, however, the compactification $\kt_i\subset\bar\kt_i$ constructed above should be expected to be  singular (but all singularities are rational, cf.\ \cite{Boutot}).\smallskip

 Indeed, at least for $k=\CC$,
the quotient $\kt_i\cong U_N\,/\!/\,\GL(V)$ is known to be stably rational, see  \cite[Prop.\ 4.7]{CTSa}
or \cite[Thm.\ 1.1]{BoKa} for the quotient by $\SL(V)$.
If $\bar \kt_i$ were smooth, then by purity $\Br(\kt_i)\cong\Br(\bar\kt_i)$, but the 
Brauer group of a stably rational smooth projective variety is trivial and we show below that it is
not for $\kt_i$.
\end{ex}

\begin{ex}
The easiest case where we can describe $\kt_0$ fully is the universal conic, i.e.\ the case $n=d=2$ for which $N=3$. The relevant component of the Hilbert scheme is
the linear system $|\ko_{\PP^2}(2)|\cong\PP^5$ and $\kt_0\subset|\ko_{\PP^2}(2)|$ is simply the open subset of all smooth conics in $\PP^2$ or, in other words, $\kt_0$ is the complement of the discriminant
divisor which is a hypersurface $D\subset \PP^5$ of degree four. As such, $\kt_0$ is certainly affine
and $|\ko_{\PP^2}(2)|$ should count as its minimal compactification.\smallskip

Later we will be interested in various invariants of the spaces $\kt_i$, cf.\ Propositions \ref{prop:Funda},
\ref{prop:Picard}, and \ref{prop:BrTW}. We remark here already that for $n=d=2$ the group $\pi_1(\kt_0)$ cyclic of order three and not four, as one might expect for the complement of a degree four hypersurface, but $D$ is singular. This classical fact is surprisingly hard to find in the literature, but it is covered by the more general results in \cite{Loenne}.
\end{ex}

\begin{remark}\label{rem:codimBound}
In \S\! \ref{sec:covfam} we will consider the fibres $\kt_{i,x}$ of the projection $\kp_i\to\PP(S^d_iV)$
over a point $x\in\PP(S^d_iV)$ as a closed subscheme of $\kt_i$. The arguments in the above proof
can be adapted to prove that for $i\gg0$ also $\kt_{i,x}$ admits a projective compactification with boundary components of arbitrarily high codimension. In fact, it is the closure of $\kt_{i,x}$ in the GIT quotient $\PP(U)^{\rm ss}\,/\!/\,\PGL(V)$ that provides the desired compactification.\TBC{TBC}
\end{remark}

\subsection{Classifying maps I}\label{sec:ClassmapsI}
 The universal Brauer--Severi varieties serve various purposes.
For those problems that are concerned with the ramified Brauer group, so the Brauer group $\Br(K(X))$ of the function
field of a variety, the case $i=0$ is the most natural one. For more global aspects, the higher
$\kp_i\to\kt_i$, $i\gg0$, are more useful.\smallskip

 In the following, we fix again $n$ and $d$ and
let $\kp_0\to\kt_0$ be the $0$th universal Brauer--Severi variety constructed before.\smallskip


\begin{prop}\label{prop:localUniv}
Assume $\pi\colon P\to X$ is a Brauer--Severi variety (with $X$ an arbitrary variety) of relative dimension $n-1$ with its period dividing the fixed integer $d$. Then there exists a Zariski open covering
$X=\bigcup U_i$ and morphisms $\varphi_i\colon U_i\to \kt_0$ such that
the restriction $P|_{U_i}\to U_i$ is isomorphic to the pull-back of $\kp_0\to \kt_0$ under $\varphi_i$, i.e.\
there exist isomorphisms
$$P|_{U_i}\cong\varphi_i^\ast\kp_0\cong \kp_0\times_{\kt_0} U_i$$ that are compatible with the projections onto $U_i$.
\end{prop}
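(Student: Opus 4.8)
The plan is to use the line bundle supplied by Lemma~\ref{lem:Or} to present $P\to X$ Zariski-locally as a flat family of Veronese-type subvarieties of a fixed projective space, and then to read off the classifying maps $\varphi_i$ from the universal property of the Hilbert scheme. Writing $\alpha=[P]\in\Br(X)$ for the Brauer class, the hypothesis $\per(\alpha)\mid d$ together with Lemma~\ref{lem:Or} gives a line bundle $\ko_P(d)\in\Pic(P)$ restricting to $\ko(d)$ on every geometric fibre $P_x\cong\PP^{n-1}$. First I would form $\ke\coloneqq\pi_\ast\ko_P(d)$: since $H^0(\PP^{n-1},\ko(d))$ has constant dimension $N=\binom{n+d-1}{n-1}$ and $H^{>0}(\PP^{n-1},\ko(d))=0$, cohomology and base change show that $\ke$ is locally free of rank $N$ with formation commuting with base change, its fibre over $x$ being $S^dW_x^\vee$ where $P_x=\PP(W_x)$.

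Next I would use that the evaluation $\pi^\ast\ke\to\ko_P(d)$ is fibrewise surjective to obtain a closed immersion $P\hookrightarrow\PP_X(\ke)$ over $X$ which on each geometric fibre is the $d$-uple Veronese embedding; in particular every fibre is, over $\bar k$, projectively equivalent to $\im(\nu_d)$. Then I would pick a Zariski open cover $X=\bigcup U_i$ trivialising $\ke$ together with isomorphisms $\ke|_{U_i}\cong\ko_{U_i}\otimes_kS^dV$, which induce $\PP_X(\ke)|_{U_i}\cong U_i\times\PP(S^dV)$ and turn $P|_{U_i}$ into a flat family of closed subschemes of $\PP(S^dV)$. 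By the universal property of $\kh_{d,0}={\rm Hilb}(\PP(S^dV))$ this family is classified by a morphism $\varphi_i\colon U_i\to\kh_{d,0}$ with $P|_{U_i}$, embedding included, identified with the pull-back $\varphi_i^\ast\kz$ of the universal subscheme. Since each fibre lies in the locus of subschemes projectively equivalent over $\bar k$ to $\im(\nu_d)$, which is exactly the open subset $\kt_0\subset\kh_{d,0}$, the map $\varphi_i$ factors through $\kt_0$, and restricting the universal family yields $P|_{U_i}\cong\varphi_i^\ast\kp_0$ compatibly with the projections to $U_i$.

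The hard part, conceptually, is that $P$ itself need not be the projectivisation of an untwisted vector bundle, which is exactly why no global classifying map is to be expected and why the conclusion is only Zariski-local. What the period hypothesis buys, through Lemma~\ref{lem:Or}, is that $\ko_P(d)$ and hence $\ke$ are untwisted, so the residual twisting lives only in the transition functions of $\ke$ and evaporates upon trivialising over each $U_i$ (the freedom in these trivialisations is the source of the non-uniqueness of the classifying maps noted in the introduction); this is the mechanism that produces honest morphisms $\varphi_i$ to $\kt_0$. The remaining steps --- local freeness of $\ke$ of the expected rank, the factorisation of $\varphi_i$ through the orbit $\kt_0$ rather than the ambient Hilbert scheme, and the matching of $P|_{U_i}$ with $\varphi_i^\ast\kp_0$ on the nose from the construction of $\kp_0\to\kt_0$ --- I expect to be routine.
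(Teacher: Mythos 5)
Your proposal is correct and follows essentially the same route as the paper: use Lemma \ref{lem:Or} to obtain the relative $\ko(d)$, push it forward to get a rank-$N$ locally free sheaf, embed $P$ via the relative Veronese into its projectivisation, trivialise over a Zariski cover, and read off the classifying maps from the Hilbert scheme, observing that they land in the orbit $\kt_0$. The only (immaterial) difference is a dualisation convention in the definition of $\ke$.
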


\begin{proof} By virtue of Lemma \ref{lem:Or}, the period of $P$ divides $d$ if and only if there exists a line bundle
 on $P$ which fibrewise is $\ko(d)$ on $\PP^{n-1}$. The dual of its (untwisted!) direct image $\ke\coloneqq \pi_\ast\ko(d)^\vee$ is locally free of rank $N$
and the relative Veronese map defines a closed embedding
\begin{equation}\label{eqn:relVer1}
\xymatrix@C=10pt@R=15pt{P~\ar[dr]\ar@{^(->}[rr]&&\PP(\ke)\ar[dl]\\
&X&}
\end{equation}
relative over $X$.

Now pick a Zariski open covering $X=\bigcup U_i$ such that the restriction
$\ke_{U_i}$ is trivial and fix trivialisations $\ke_{U_i}\cong S^dV\otimes\ko_{U_i}$. Then the restriction of (\ref{eqn:relVer1}) to $U_i$ composed
with the induced trivialisation $\PP(\ke_{U_i})\cong \PP(S^dV)\times U_i$ can
be viewed as a flat family $P|_{U_i}\subset U_i\times\PP(S^dV)$ of closed
subschemes of $\PP(S^dV)$ parametrised by $U_i$. 
Let  $\varphi_i\colon U_i\to {\rm Hilb}(\PP(S^dV))$ denote the classifying morphism for this family.
As its image is contained in the $\PGL(S^dV)$-orbit $\kt_0\subset {\rm Hilb}(\PP(S^dV))$, we obtain the Cartesian diagram
$$\xymatrix{P|_{U_i}\ar[d]\ar@{}[dr]|\times\ar[r]& \kp_0\ar[d]\\
U_i\ar[r]&\kt_0.}$$
This proves the assertion.
\end{proof}

The ultimate local version of the proposition is the following function field rephrasing.

\begin{cor}\label{cor:ffieldversion}
Assume $P\to X$ is a Brauer--Severi variety of relative dimension $n-1$ with its period dividing a fixed integer $d$. Then there exists a morphism $\varphi\colon\eta=\Spec(K(X))\to \kt_0$ such that
the generic fibre $P_\eta\to \eta=\Spec(K(X))$  is isomorphic to the pull-back $\varphi^\ast\kp_0$.\qed
\end{cor}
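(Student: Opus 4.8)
The plan is to deduce Corollary \ref{cor:ffieldversion} directly from Proposition \ref{prop:localUniv} by restricting the local classifying maps to the generic point. The key observation is that the generic point $\eta = \Spec(K(X))$ lies in every nonempty Zariski open subset of the (irreducible) variety $X$, so in particular it lies in at least one member $U_{i_0}$ of the covering $X = \bigcup U_i$ produced by the proposition. First I would choose such a $U_{i_0}$ and consider the composition
$$\varphi \colon \eta = \Spec(K(X)) \longrightarrow U_{i_0} \longrightarrow \kt_0,$$
where the first map is the canonical inclusion of the generic point and the second is the morphism $\varphi_{i_0}$ supplied by Proposition \ref{prop:localUniv}.

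Next I would verify that this $\varphi$ has the required property. By Proposition \ref{prop:localUniv} there is an isomorphism $P|_{U_{i_0}} \cong \varphi_{i_0}^\ast \kp_0 \cong \kp_0 \times_{\kt_0} U_{i_0}$ compatible with the projections to $U_{i_0}$. Base-changing this isomorphism along $\eta \hookrightarrow U_{i_0}$ and using the compatibility of fibre products with composition, the generic fibre becomes
$$P_\eta \cong (P|_{U_{i_0}}) \times_{U_{i_0}} \eta \cong \kp_0 \times_{\kt_0} \eta \cong \varphi^\ast \kp_0,$$
again compatibly with the projection to $\eta$. Here the middle identification uses the transitivity of base change: pulling $\kp_0$ back first to $U_{i_0}$ and then to $\eta$ is the same as pulling it back directly along the composite $\varphi$.

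Since the argument is essentially a formal consequence of the already-established local statement, there is no serious obstacle; the only point requiring a word of care is the implicit assumption that $X$ is irreducible (so that it possesses a single generic point lying in every nonempty open), which is the standard convention for a variety in this paper. If one wished to allow reducible $X$, one would run the same argument separately on each irreducible component, but in the intended setting a single $U_{i_0}$ suffices and the corollary follows at once.
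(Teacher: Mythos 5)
Your argument is correct and is exactly the one the paper intends: the corollary is stated with only a \qed because it follows immediately from Proposition \ref{prop:localUniv} by restricting one of the local classifying maps $\varphi_{i_0}\colon U_{i_0}\to\kt_0$ to the generic point and invoking transitivity of base change. Your additional remark about irreducibility is consistent with the paper's conventions and does not change the argument.
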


Note that the classifying map $\varphi$ is not unique, not even up to
the action of $\PGL(S^dV)$.

\begin{remark}
The proposition immediately implies upper bounds for the essential dimension of $\PGL(n)$, which are,
however, weaker than the known ones (which are roughly quadratic in $n$), cf.\ \cite[\S\! 10]{MerkEss} or \cite[\S\! 7]{Reichstein}.
\end{remark}

\begin{remark}\label{rem:first} Corollary \ref{cor:ffieldversion} is not entirely new in the literature. Notably, in the special case of the Veronese embedding $\GL(V)/\mu_d\,\hookrightarrow \GL(S^d_iV)$, \cite[Thm.\ 8.1 \& 8.2]{first} (cf.\ also \cite[Thm.\ 8.4]{first}) ultimately also gives the same construction of $Q_i\to B_i$ and proves versality properties with respect to torsors over affine schemes. The universality of the construction though is achieved via the Grassmannian and not the Hilbert scheme as above, and the scope and aims of loc.\ cit.\ and other papers referenced therein are different.
\end{remark}

\subsection{Classifying maps II}

 Let $X$ be a quasi-projective variety with an ample line bundle $\kl$ and $\pi\colon P\to X$  a Brauer--Severi variety of relative dimension $n-1$. We assume that its period divides a fixed integer $d$, so that we have a relative
 $\ko(d)$ at our disposal and 
 then $\ke\coloneqq\pi_\ast\ko(d)^\vee$ is locally free of rank $N$. Furthermore, for $m\gg0$ there exists a vector space $W$ and an injection 
\begin{equation}\label{eqn:gg}
\ke\,\hookrightarrow W\otimes \kl^m
\end{equation}
of constant rank.  

\begin{remark} We can be more precise about the dimension of $W$. Once we know
that $\ke^\vee\otimes\kl^m$ is globally generated, then there exist $\rk(\ke)+\dim(X)=N+\dim(X)$ global sections that globally generate $\ke^\vee\otimes\kl^m$, cf.\ \cite[\S\! 5]{HL}. So for this purpose, we may pick $W$ to be of dimension $\dim(W)=N+\dim(X)$. 
\end{remark}

The induced closed embedding
$$\PP(\ke)\,\hookrightarrow \PP(W\otimes\kl^m)\cong \PP(W)\times X$$
can be composed with (\ref{eqn:relVer1}) to yield a closed embedding over $X$
\begin{equation}\label{eqn:relVer}
\xymatrix@C=10pt@R=15pt{P~\ar[dr]\ar@{^(->}[rr]&&\PP(W)\times X\ar[dl]\\
&X.&}
\end{equation}
Let us now fix an isomorphism $W\cong S^d_iV=S^dV\oplus k^{\oplus i}$ and then view
the smooth family of closed subschemes $P\to X$ of $\PP(S^d_iV)$
as the pull-back  of the universal $\kp_i\to \kt_i$ under the classifying map
$$\psi\colon X\to \kt_i\subset {\rm Hilb}(\PP(S^d_iV)),$$ i.e.\ $P\cong X\times_{\kt_i}\kp_i$.

Altogether we have proved the following global version of Proposition \ref{prop:localUniv}.

\begin{prop}\label{prop:GlobalUniv} Assume $P\to X$ is a Brauer--Severi variety of relative dimension $n-1$ with its period dividing a fixed integer $d$. Then for all $i\geq\dim(X)$ there exists 
a morphism $\psi\colon X\to\kt_i$ such that $P$ is isomorphic to the pull-back of
the $i$-th universal Brauer--Severi variety  $\kp_i\to\kt_i$, i.e.\
there exists an isomorphism
$P\cong\psi^\ast\kp_i\cong X\times_{\kt_i}\kp_i$ compatible with the projections to $X$.\qed
\end{prop}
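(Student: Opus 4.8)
The plan is to reduce the global statement (Proposition \ref{prop:GlobalUniv}) to the already-established ingredients, chiefly Lemma \ref{lem:Or} and the relative Veronese construction (\ref{eqn:relVer1}), together with the freedom to enlarge the target projective space using an ample line bundle. First I would invoke Lemma \ref{lem:Or}: since $\per(\alpha)\mid d$, there exists a relative $\ko(d)$ on $P$, so that $\ke\coloneqq\pi_\ast\ko(d)^\vee$ is a locally free sheaf of rank $N$ on $X$, and the relative Veronese embedding gives a closed immersion $P\hookrightarrow\PP(\ke)$ over $X$. This is exactly the setup of (\ref{eqn:relVer1}); the only difference from the $i=0$ case of Proposition \ref{prop:localUniv} is that here $\ke$ need not be globally trivial, which is why we cannot directly classify into $\kt_0$ and must allow the extra $k^{\oplus i}$ summand.

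The key step is to globally trivialise $\PP(\ke)$ by embedding it into a \emph{constant} projective bundle over $X$. Using that $X$ is quasi-projective with ample $\kl$, for $m\gg0$ the twist $\ke^\vee\otimes\kl^m$ is globally generated, and by the standard bound (cf.\ the remark following (\ref{eqn:gg}), using \cite{HL}) one can find a vector space $W$ with $\dim(W)=N+\dim(X)$ together with an injection $\ke\hookrightarrow W\otimes\kl^m$ of constant rank. Dualising and projectivising yields a closed embedding $\PP(\ke)\hookrightarrow\PP(W\otimes\kl^m)\cong\PP(W)\times X$, whose composition with (\ref{eqn:relVer1}) produces the closed immersion (\ref{eqn:relVer}) of $P$ into the trivial bundle $\PP(W)\times X$ over $X$. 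The point is that $\dim(W)-N=\dim(X)$, so choosing any $i\geq\dim(X)$ and fixing an isomorphism $W\cong S^d_iV=S^dV\oplus k^{\oplus i}$ realises $P$ as a flat family of closed subschemes of the \emph{fixed} projective space $\PP(S^d_iV)$, parametrised by $X$.

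Finally I would let $\psi\colon X\to\mathrm{Hilb}(\PP(S^d_iV))$ be the classifying morphism of this flat family and check that its image lands in the orbit $\kt_i$. This is forced by construction: every geometric fibre of $P\to X$ is a Brauer--Severi variety of relative dimension $n-1$ with relative $\ko(d)$, hence over $\bar k$ each fibre is projectively equivalent to $\im(\nu_{d,i})$, which is precisely the condition cutting out $\kt_i\subset\mathrm{Hilb}(\PP(S^d_iV))$ as the open orbit through $[\im(\nu_{d,i})]$. The universal property of the Hilbert scheme then gives a Cartesian square expressing $P\cong\psi^\ast\kp_i\cong X\times_{\kt_i}\kp_i$ compatibly with the projections to $X$.

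The main obstacle, and the only place requiring genuine care, is verifying that the image of $\psi$ really lies in the orbit $\kt_i$ rather than merely in $\kh_{d,i}$: one must confirm that the embedding coming from $\ke^\vee\hookrightarrow W\dual\otimes\kl^{-m}$ restricts on each fibre to a \emph{linearly normal} Veronese-type embedding, so that the span of each fibre is an honest $\PP^{N-1}$ and the subscheme is projectively equivalent to $\im(\nu_{d,i})$ over $\bar k$. Because $\ke^\vee\cong(\pi_\ast\ko(d)^\vee)^\vee$ computes the full space of fibrewise degree-$d$ sections, the relative Veronese map is linearly normal on fibres, and the extra $k^{\oplus i}$ coordinates merely re-embed the same subscheme inside a larger ambient space without changing its projective-equivalence class; this is exactly the geometric content behind the inclusions (\ref{eqn:SdVi}) and the embeddings $\kt_i\subset\kt_{i+1}$. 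Once this fibrewise normality is granted, the remaining assembly is routine and the proposition follows, as recorded by the \textbf{\qed} at the end of the statement.
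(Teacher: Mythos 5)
Your proposal follows essentially the same route as the paper's own argument: the relative $\ko(d)$ from Lemma \ref{lem:Or}, the relative Veronese embedding into $\PP(\ke)$ with $\ke=\pi_\ast\ko(d)^\vee$, the embedding into the constant bundle $\PP(W)\times X$ via an injection $\ke\hookrightarrow W\otimes\kl^m$ with $\dim(W)=N+\dim(X)$, and the classifying map into ${\rm Hilb}(\PP(S^d_iV))$ landing in the orbit $\kt_i$. Your closing remark on fibrewise linear normality makes explicit a point the paper leaves implicit, but it is the same proof.
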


\begin{remark}\label{rem:choices}
The construction of $\psi$ involves a few choices.
 Firstly, we had to choose the vector space $W=S^d_iV$, so essentially its dimension which we tried to keep small depending on the dimension of $X$.
 \smallskip
 
 Secondly, we had to choose a surjection $q\colon S^d_iV^\vee\otimes\ko\twoheadrightarrow \ke^\vee\otimes\kl^m$ and any such surjection can be altered by an isomorphism $S^d_iV^\vee\otimes \ko\congpf S^d_iV^\vee\otimes \ko$. For $X$ integral and pro\-jective, the latter is given by an element in $\PGL(S^d_iV)$ and so the difference between the two classifying maps
is explained by the action of $\PGL(S^d_iV)$ on $\kt_i$. However,
any other choice  of $q$, which usually exists when $X$ is not projective, substantially changes $\psi$. The only way to make $\psi$
canonical (up to the $\PGL(S^dV_i)$-action) would be by choosing $i=h^0(X,\ke^\vee\otimes\kl^m)^\ast-N$, which however makes it dependent on $\ke$.
\end{remark}

\begin{remark} The comparison of the two constructions in 
Propositions \ref{prop:localUniv} \& \ref{prop:GlobalUniv} is a little subtle.
Ideally, one would like  the restriction of $\psi\colon X\to\kt_i$ to one of
the open subsets $U_i\subset X$ in the proof of Proposition  \ref{prop:localUniv} to be linked to the $\varphi_i\colon U_i\to \kt_0$ constructed there, e.g.\ via some closed linear
embedding $\PP(S^dV)\,\hookrightarrow \PP(S^d_iV)$ 
and the induced $\kt_0\,\hookrightarrow \kt_i$.
However, although every trivialisation $\ke_{U_i}\cong S^dV\otimes\ko_{U_i}$ can be extended
to an injection of constant rank $\ke_{U_i}\cong S^dV\otimes\ko_{U_i}\,\hookrightarrow W\otimes\kl^m_{U_i}\cong W\otimes\ko_{U_i}$, the latter might not be induced by a constant $S^dV\,\hookrightarrow W$.

So for $i\geq \dim(X)$ one can arrange things into a diagram that is commutative on the right but not on the left:
$$\xymatrix{\Spec(K(X))\ar[d]^-\varphi&\cdots&&X\ar[d]^{\psi}\ar@{=}[r]&X\ar[d]^\psi&\cdots\\
\kt_0&\cdots&\kt_i\ar@{^(->}[r]&\cdots\kt_{\dim(X)}\ar@{^(->}[r]&\kt_{\dim(X)+1}&\cdots}$$
\end{remark}

\subsection{Compatibilities} For certain problems (not for the period-index conjecture), it sometimes
suffices to consider the case $d=n$ and
it is possible to link any other choice to it in terms of  classifying maps. We will restrict the discussion
to the case of the smallest universal Brauer--Severi varieties $\kt_0$ and wish to compare
them for the two choices $(d,n)$ and $(n,n)$. So we are considering the two universal Brauer--Severi varieties $$\pi^d\colon \kp_0(d,n)\to \kt_0(d,n) ~\text{ and }~\pi^n\colon\kp_0(n,n)\to \kt_0(n,n).$$

\begin{prop}\label{prop:compa}
There exists a morphism $\varphi\colon\kt_0(d,n)\to \kt_0(n,n)$ such that $\kp_0(d,n)\cong\varphi^\ast\kp_0(n,n)$.
\end{prop}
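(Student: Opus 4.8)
The plan is to realise $\kp_0(d,n)$, together with its relative $\ko(n)\coloneqq\ko(d)^{\otimes n/d}$, as a family of $n$-uple Veronese varieties inside a \emph{fixed} $\PP(S^nV)$, and to read off $\varphi$ as the resulting classifying map. Since $d\mid n$, the line bundle $\ko(n)$ restricts to $\ko(n)$ on every geometric fibre, so by Lemma \ref{lem:Or} the period of $\kp_0(d,n)$ also divides $n$. The two tools are the relative $(n/d)$-uple Veronese $\nu_{n/d}$ and the multiplication map $m\colon S^{n/d}(S^dV)\longrightarrow S^nV$, $w_1\cdots w_{n/d}\mapsto w_1\cdots w_{n/d}$ (product in $S^\bullet V$). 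This $m$ is, up to scalar, the unique $\GL(V)$-equivariant surjection onto the Cartan summand, reflecting that $S^nV$ occurs in the plethysm $S^{n/d}(S^dV)$ with multiplicity one. I also use that the rank-$N$ bundle $\pi_*\ko(d)$ is canonically trivial on $\kt_0(d,n)$: the surjection $S^dV^\vee\otimes\ko\longrightarrow\pi_*\ko(d)$ coming from the ambient $\PP(S^dV)$ is an isomorphism of rank-$N$ bundles, which is what lets us keep the ambient projective space constant.

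First I would compose the embedding $\kp_0(d,n)\,\hookrightarrow\kt_0(d,n)\times\PP(S^dV)$ with $\id\times\nu_{n/d}$ to land in $\kt_0(d,n)\times\PP(S^{n/d}(S^dV))$, and then with the linear projection ${\rm pr}\colon\PP(S^{n/d}(S^dV))\longrightarrow\PP(S^nV)$ induced by $m$ (a priori defined only away from the centre $\PP(\ker m)$). The crucial point is that no fibre meets this centre, so that the composite is a genuine \emph{morphism} on all of $\kp_0(d,n)$: over $[g\cdot\nu_d(\PP V)]\in\kt_0(d,n)$ a fibre point maps to $m\bigl((g\cdot v^d)^{n/d}\bigr)=(g\cdot v^d)^{n/d}\in S^nV$, which is the $(n/d)$-th power of the nonzero form $g\cdot v^d\in S^dV$, hence nonzero since $S^\bullet V$ is a domain. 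Using the trivialisation $\pi_*\ko(d)\cong S^dV^\vee\otimes\ko$ above, this exhibits $\kp_0(d,n)$ as a flat family of closed subschemes of the fixed $\PP(S^nV)$ on which $\ko(1)$ pulls back to the relative $\ko(n)$. The induced morphism to ${\rm Hilb}(\PP(S^nV))$ is the candidate $\varphi$, and $\kp_0(d,n)\cong\varphi^\ast\kp_0(n,n)$ then holds by construction; over the base point the fibre is exactly $\nu_n(\PP V)$, since $m\bigl((v^d)^{n/d}\bigr)=v^n$.

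The hard part will be checking that \emph{every} fibre lands in the orbit $\kt_0(n,n)\subset{\rm Hilb}(\PP(S^nV))$, i.e.\ that each image ${\rm pr}\bigl(\nu_{n/d}(g\cdot\nu_d(\PP V))\bigr)$ is again a \emph{nondegenerate} $n$-uple Veronese and not a degenerate linear projection of one. Equivalently, the rank $\dim S^nV$ bundle $\pi_*\ko(n)$ must be trivial, which amounts to the composite $S^nV^\vee\xrightarrow{\iota}S^{n/d}(S^dV^\vee)\xrightarrow{S^{n/d}(g)^\vee}S^{n/d}(S^dV^\vee)\xrightarrow{\nu_d^\ast}S^nV^\vee$ being an isomorphism for \emph{all} $g\in\GL(S^dV)$, where $\iota=m^\vee$ selects the Cartan component. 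By multiplicity one this composite is a nonzero scalar at $g=\id$, so $\varphi$ is at least a well-defined morphism on a dense $\PGL(S^dV)$-invariant open subset of $\kt_0(d,n)$. The subtlety is that $\iota$ is only $\GL(V)$- and not $\GL(S^dV)$-equivariant, so homogeneity does not propagate the nondegeneracy off the base point, and one must genuinely control the intersection of the translated Cartan space $S^{n/d}(g)\iota(S^nV^\vee)$ with $\ker\nu_d^\ast$. I expect the resolution to combine the multiplicity-one structure with the $\PGL(S^dV)$-equivariance of the total family and with the cohomological rigidity of the base: for $k=\CC$ and $i=0$ Theorem \ref{thm2} gives $H^{2j}(\kt_0(d,n),\QQ)=0$ in a large range of positive even degrees, forcing the low Chern classes of $\pi_*\ko(n)$ to vanish, so that the triviality of $\pi_*\ko(n)$ — and hence the extension of $\varphi$ to a morphism on all of $\kt_0(d,n)$ — should reduce to a finite representation-theoretic verification.
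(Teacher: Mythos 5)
You have set this up exactly as the paper does: re-embed each fibre by the relative $(n/d)$-uple Veronese and then project linearly via the multiplication map $m\colon S^{n/d}(S^dV)\to S^nV$ onto the Cartan summand. You have also put your finger on precisely the step that carries all the weight, namely that for \emph{every} $g\in\GL(S^dV)$ the translated Cartan space $S^{n/d}(g)(S^nV)$ meets $\ker(m)$ only in $0$ --- equivalently, that the degree-$n$ forms $(g(v^d))^{n/d}$, $v\in V$, span all of $S^nV$, so that each projected fibre is a nondegenerate $n$-uple Veronese and not a degenerate linear projection of one. This is the same assertion that the paper's proof compresses into the phrase that the symmetric power of the trivialisation ``commutes with the two quotient maps''; your observation that the relevant locus is only $\GL(V)$-invariant, not $\GL(S^dV)$-invariant, so that homogeneity cannot propagate the nondegeneracy from the base point, is correct.

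The step in fact fails, so the gap is genuine and not merely a missing verification. Take $n=4$, $d=2$, $V=\langle x,y,z,w\rangle$, and set $F_1=xy+zw$, $F_2=xy-zw$, $F_3=xz+yw$, $F_4=xz-yw$. These quadrics are linearly independent, so there is $g\in\GL(S^2V)$ with $g(x^2)=F_1$, $g(y^2)=F_2$, $g(z^2)=F_3$, $g(w^2)=F_4$. Then $F_1^2-F_2^2-F_3^2+F_4^2=4xyzw-4xyzw=0$, while $x^4-y^4-z^4+w^4\neq 0$; hence the linear map $S^4V\to S^4V$ determined by $v^4\mapsto (g(v^2))^2$ has nontrivial kernel, and the quartics $(g(v^2))^2$ span a proper subspace of $S^4V$. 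Over the corresponding point of $\kt_0(2,4)$ your composite therefore sends the fibre to a degenerate projection of $\nu_4(\PP^3)$, which does not lie in the orbit $\kt_0(4,4)$. So the construction only yields a rational map to $\kt_0(n,n)$ (equivalently, a morphism to $\kt_i(n,n)$ with $i=\dim S^{n/d}(S^dV)-\dim S^nV$ via the surjection $S^{n/d}(S^dV^\vee)\otimes\ko\twoheadrightarrow\pi_\ast\ko_\pi(n)$). Your proposed repair cannot close this: the statement to be proved is that (some twist of) the rank-$\dim S^nV$ bundle $\pi_\ast\ko_\pi(n)$ is \emph{trivial}, and the vanishing of its rational Chern classes --- which does follow from the weight argument of Theorem \ref{thm2} --- gives at best rational, not actual, triviality; note also that $\det\pi_\ast\ko_\pi(n)$ lies in the finite but nontrivial group $\Pic(\kt_0(d,n))\cong\ZZ/(dN/n)\ZZ$ and is not automatically zero without an extra twist of the relative $\ko(n)$. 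As written, the argument establishes the proposition only over a dense open subset of $\kt_0(d,n)$, and a genuinely different idea is needed to extend it across the bad locus.
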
 

\begin{proof} Taking the $n/d$-th symmetric power of  the natural trivialisation
$S^dV^\vee\otimes\ko\cong \pi^d_\ast\ko_{\pi^d}(d)$ over $B_0(d,n)$ induces
an isomorphism $S^{n/d}(S^dV^\vee)\otimes\ko\cong S^{n/d}\pi^d_\ast\ko_{\pi^d}(d)$.
It commutes with the two quotient maps $S^{n/d}(S^dV^\vee)\twoheadrightarrow S^nV^\vee$
and $S^{n/d}\pi^d_\ast\ko_{\pi^d}(d)\twoheadrightarrow \pi^d_\ast\ko_{\pi^d}(n)$ and, therefore,
defines a trivialisation $S^nV^\vee\otimes\ko\cong \pi_\ast^d\ko_{\pi^d}(n)$. Apply the construction
of the classifying morphism $\varphi\colon B_0(d,n)\to B_0(n,n)$ in \S\! \ref{sec:ClassmapsI} to conclude.
\end{proof}


\begin{remark}
The morphism $\varphi\colon B_0(d,n)\to B_0(n,n)$ induces pull-back maps
$$\varphi^\ast\colon \Pic(B_0(n,n))\to\Pic(B_0(d,n))~\text{ and }~\varphi^\ast\colon \Br(B_0(n,n))\to\Br(B_0(d,n)).$$
In the next section, all these groups will be shown to be cyclic of finite order.
For example, $\Br(B_0(n,n))\cong\ZZ/n\ZZ$ and $\Br(B_0(d,n))\cong\ZZ/d\ZZ$,
both generated by the Brauer class of the respective universal Brauer--Severi varieties. 
Hence, by construction of $\varphi$, the two restriction maps $\varphi^\ast$ are the natural quotients.
\end{remark}

With similar techniques one can prove, for example, the existence of a classifying morphism
$$\varphi\colon B_0(d,n)\times B_0(\ell,\ell)\to B_0(\ell n,\ell d),$$
such that the pull-back $\varphi^\ast Q_0(\ell n,\ell d)$ of the universal Brauer--Severi variety
over $B_0(\ell n,\ell d)$ is isomorphic to the exterior tensor product $Q_0(d,n)\boxtimes Q_0(\ell,\ell)$
of the universal Brauer--Severi varieties over the two factors.
\section{Brauer, Picard, and fundamental group}\label{sec:PIBrPic}
In this section we compute the three groups in the title for the universal Brauer--Severi variety $\kp_i\to\kt_i$. The computation will reveal a fundamental difference between the case $i=0$ and $i>0$.\smallskip

As in the previous section, we let $V$ be a $k$-vector space of dimension $n$ and $d$ an integer
dividing $n$. However, from now on we will assume that $k$ is algebraically closed.
For other fields $k$, the description of the fundamental group and the Brauer
group (but not of the Picard group) would need to be modified, cf.\ Remark \ref{rem:Serre}.\smallskip

 Recall that by construction, the universal Brauer--Severi variety $\kp_0\to\kt_0$ parametrises 
all closed subschemes of $\PP(S^dV)$ that are projective equivalent to the image
of a Veronese embedding. Similarly, the global universal Brauer--Severi variety
$\kp_i\to\kt_i$, $i>0$, parametrises all subschemes contained in the bigger projective
space $\PP(S^d_iV)$ projective equivalent to the image of a Veronese embedding. \smallskip

We begin by collecting a few standard facts for classical linear algebraic groups over an algebraically closed field $k$:
\begin{enumerate}
\item The fundamental group of $\PGL(\ell)$ is cyclic of order $\ell$, i.e.\
$\pi_1(\PGL(\ell))\cong\mu_\ell$. The universal cover is given by the $\mu_\ell$-quotient $\SL(\ell)\twoheadrightarrow \PGL(\ell)$.
\item The Picard group of $\PGL(\ell)$ is also cyclic of order $\ell$, i.e.\
$\Pic(\PGL(\ell))\cong\ZZ/\ell\ZZ$, cf.\ \cite[Cor.\ 4.6]{FossumeIversen}.
The \'etale cyclic cover associated with its torsion generator
is again  the quotient $\SL(\ell)\twoheadrightarrow \PGL(\ell)$.
\item The Brauer group $\Br(\PGL(\ell))$ is trivial, cf.\ \cite[Cor.\ 4.3]{Iversen}.
\item The general linear group $\GL(\ell)$ has trivial Picard and Brauer group.
The former follows from realising $\GL(\ell)$ as an open subset of the affine space 
of dimension $\ell^2$. The latter is well-known and can be found in \cite[Prop.\ 7.4]{RS}. By means of the determinant map, the fundamental group of $\GL(\ell)$ is identified with $\pi_1(\GL(\ell))\cong\pi_1(\GL(1))\cong\ZZ$.\end{enumerate}

\subsection{Fundamental group}
The easiest of the three computations is the one of the fundamental group.
The difference between the two cases $B_0$ and $B_i$, $i>0$, in the following proposition
ultimately reflects the discussion in the previous section:
We expect in both cases the compactification of $\kt_i$ to be simply connected. Since for $i\gg0$ the boundary of the GIT-compactification $\kt_i\subset\bar\kt_i$ is of codimension at least two, also $\kt_i$ should be simply connected.
On the other hand, loops around the codimension one components of the boundary 
of any projective compactification $ \kt_0\subset\bar\kt_0$ of the affine $\kt_0$ 
potentially produce non-trivial elements in the fundamental group.

\begin{prop}\label{prop:Funda}
The space $\kt_i$ is simply connected if and only if $i>0$. Furthermore,
the fundamental group of $\kt_0$ is cyclic of order $dN/n$:
$$\pi_1(\kp_i)\cong\pi_1(\kt_i)\cong\begin{cases}
    \{1\},& \text{if } i>0\\
    \ZZ/(dN/n)\ZZ,              & \text{if } i=0.
\end{cases}$$
\end{prop}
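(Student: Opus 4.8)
The plan is to compute $\pi_1(\kt_i)$ directly from the description of $\kt_i$ as a homogeneous space, using the fibration long exact sequence of homotopy groups together with the standard facts about classical groups listed just before the proposition. Since $\kp_i\cong\PGL(S^d_iV)\times_{\PGL(V)}\PP(V)$ is a $\PP^{n-1}$-bundle over $\kt_i$ and $\PP^{n-1}$ is simply connected, the projection $\pi_i\colon\kp_i\to\kt_i$ induces an isomorphism $\pi_1(\kp_i)\cong\pi_1(\kt_i)$; this disposes of the first identification in the statement, so it suffices to treat $\kt_i$ itself.

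First I would exploit the presentation $\kt_i\cong\GL(S^d_iV)/\GL(V,i)$ from (\ref{eqn:kt}). The cleanest route is via the quotient description in the proof of Proposition~\ref{prop:Dim}, namely $\kt_i\cong U_N/\GL(V)$, where $U_N\subset\Hom(S^dV,S^d_iV)$ is the open set of injective maps and $\GL(V)$ acts freely by precomposition (freeness because the stabiliser computation shows the action has trivial stabiliser on injective maps, the relevant kernel $\mu_d$ of $\GL(V)\to\GL(S^dV)$ being what one must track). The associated fibration $\GL(V)\to U_N\to \kt_i$ yields the exact sequence
$$\pi_2(\kt_i)\to\pi_1(\GL(V))\to\pi_1(U_N)\to\pi_1(\kt_i)\to\pi_0(\GL(V)).$$
Here $\pi_1(\GL(V))\cong\ZZ$ by fact (4), and $\GL(V)$ is connected so $\pi_0(\GL(V))=1$. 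The key geometric input is the connectivity/simple-connectivity of $U_N$: it is the complement of the degeneracy loci $U\setminus U_N=\bigsqcup_{\ell<N}U_\ell$ inside the affine (hence contractible) space $U=\Hom(S^dV,S^d_iV)$. Using the codimension computation $\dim(U_\ell)=\ell(2N+i-\ell)$ already recorded, the complement $U\setminus U_N$ has codimension $N_i-N+1=i+1$ in $U$; for $i\geq 1$ this codimension is $\geq 2$, so $U_N$ is simply connected, whereas for $i=0$ the complement is a divisor and $U_N$ acquires a single $\ZZ$ from the loop around it.

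Assembling these: for $i>0$, $U_N$ simply connected forces $\pi_1(\kt_i)$ to be a quotient of $\pi_1(\GL(V))/\im(\pi_1(U_N))$, hence trivial once one checks the boundary map $\pi_2(\kt_i)\to\pi_1(\GL(V))\cong\ZZ$ is surjective; the geometric source of this surjection is the Picard/first Chern class of the tautological $\GL(V)$-bundle, or equivalently the non-triviality of $\Pic(\kt_i)$ computed later. For $i=0$, the affine space $U$ contributes nothing but the divisor complement $U_N=U\setminus U_0$ gives $\pi_1(U_N)\cong\ZZ$, and the quotient by the image of $\pi_1(\GL(V))\cong\ZZ$ produces the cyclic group, whose order I would pin down as $dN/n$ by tracking the multiplicity with which the generator of $\pi_1(\GL(V))$ maps in, governed by the index of the sublattice generated by $\det$ on the $\mu_d$-cover. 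The main obstacle is precisely this last bookkeeping: the factor $d$ enters through the $\mu_d$-kernel of $\GL(V)\to\GL(S^dV)$ and the factor $N/n$ through comparing the determinant characters of $\GL(V)$ acting on $V$ versus on $S^d_0V$, so the order $dN/n$ emerges only after carefully computing how the generator of $H^1$ (equivalently the winding number around the discriminant divisor $U_0$) pulls back under the two determinant maps. I expect this character computation, rather than any homotopy-theoretic subtlety, to be where the real work lies.
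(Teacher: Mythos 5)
Your route---the presentation $\kt_i\cong U_N/\GL(V)$ from the proof of Proposition \ref{prop:Dim} together with the long exact homotopy sequence---is viable and genuinely different from the paper's argument in the case $i>0$: the paper works with the fibration $\GL(V,i)\to\GL(S^d_iV)\to\kt_i$ and observes that $\pi_1(\GL(k^{\oplus i}))\to\pi_1(\GL(S^d_iV))$ is already surjective, whereas you would get simple connectivity from the codimension-$(i+1)$ estimate for $U\setminus U_N$ inside the contractible $U$. However, your assembly of the exact sequence for $i>0$ is wrong as written. For a fibration $F\to E\to B$ with $E=U_N$ and $B=\kt_i$, exactness of
$$\pi_2(B)\to\pi_1(F)\to\pi_1(E)\to\pi_1(B)\to\pi_0(F)=1$$
at $\pi_1(B)$ says that $\pi_1(\kt_i)$ is a quotient of $\pi_1(U_N)$, not of $\pi_1(\GL(V))$; the expression ``$\pi_1(\GL(V))/\im(\pi_1(U_N))$'' corresponds to no map in the sequence. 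So once $U_N$ is simply connected (i.e.\ $i\geq1$), you are done immediately, and the surjectivity of $\pi_2(\kt_i)\to\pi_1(\GL(V))$ that you propose to extract from $\Pic(\kt_i)$ is neither needed nor relevant---it would only control the kernel of $\pi_1(F)\to\pi_1(U_N)$. The error is self-healing here, but as stated the step is incorrect.

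Two further points for $i=0$. First, the $\GL(V)$-action on $U_N$ is \emph{not} free: the stabiliser of an injective $\psi$ consists of the scalars $\zeta\cdot{\rm id}_V$ with $\zeta^d=1$, which is exactly the kernel $\mu_d$ of $\GL(V)\to\GL(S^dV)$ recorded after (\ref{eqn:kt}); your parenthesis asserts freeness while simultaneously acknowledging this kernel. The fibre is therefore $\GL(V)/\mu_d$, whose fundamental group is $\ZZ\oplus\ZZ/d\ZZ$ for $d\mid n$ (the determinant descends and splits off the $\ZZ$). This does not change the answer, since the torsion dies in the torsion-free $\pi_1(\GL(S^dV))\cong\ZZ$ and the image is still that of $\pi_1(\GL(V))$, but it must be said. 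Second, your bookkeeping of the order $dN/n$ is off: the single identity $\det_{S^dV}|_{\GL(V)}=\det_V^{dN/n}$ (the sum of the weights of $S^dV$ is $dN/n$ times that of $V$) already yields the full factor, exactly as in the paper's proof; the $d$ does not enter separately ``through the $\mu_d$-kernel'' with only $N/n$ coming from the determinants. With these corrections your argument closes and agrees with the paper's.
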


Recall that we work over an algebraically closed ground field $k$. For $k=\CC$,
one can alternatively work with the \'etale or the topological fundamental group.

\begin{proof}
We use the representation of $\kt_i$ as the quotient of $\GL(S^d_iV)$ by the action of the group 
$$\left(\begin{matrix}\GL(V) &\ast\\
0&\GL(k^{\oplus i})
\end{matrix}\right)$$
and the fact that  $\det\colon\GL\to \GG_m$ induces an isomorphism
$\pi_1(\GL)\cong\pi_1(\GG_m)\cong\ZZ$. Now, for $i\ne0$ the inclusion $\GL(k^{\oplus i})\,\hookrightarrow\GL(S^d_iV)$ is compatible with taking determinants. Hence, the induced map $\pi_1(\GL(k^{\oplus i}))\to \pi_1(\GL(S^d_iV))$ in the standard exact sequence for fibrations is surjective, which implies the first assertion.\smallskip

For $i=0$ it suffices to prove that  $\ZZ\cong\pi_1(\GL(V))\to\pi_1(\GL(S^dV))\cong\ZZ$ induced by $\GL(V)\to\GL(S^dV)$ is given by multiplication with $dN/n$. This follows from the observation that the determinant on
$\GL(S^dV)$ restricted to $\GL(V)$ is the $(dN/n)$-th power of the determinant on $\GL(V)$.
Alternatively, one can use the exact  sequence  of homotopy groups for the quotient $\kt_0=\PGL(S^dV)\,/\,\PGL(V)$ 
of the subgroup(!) $\PGL(V)\subset \PGL(S^dV)$.\smallskip

The isomorphism $\pi_1(\kp_i)\cong\pi_1(\kt_i)$ follows from the long exact homotopy sequence for
the fibration $\kp_i\to\kt_i$ which has simply connected fibres.
\end{proof}

\subsection{Picard group} 
Any classifying map $\psi\colon X\to \kt_i$, cf.\ Proposition \ref{prop:GlobalUniv}, 
pulls back line bundles on $\kt_i$ to line bundles on  $X$. So one should maybe 
not expect $\kt_i$ to carry too many non-trivial line bundles. Note, however, that the construction
of the classifying map $\varphi$ already involved an ample line bundle on $X$.\smallskip

As for the fundamental group, there is a difference in the Picard groups in the two cases
$i=0$ and $i>0$.  Since $\kt_i$ for $i>0$ admits a compactification with a boundary
of codimension at least two, its Picard group cannot be trivial and not even torsion.

\begin{prop}\label{prop:Picard}
The Picard group of $\kt_i$ is described by:
 $$\Pic(\kt_i)\cong\begin{cases}
    \ZZ,& \text{if } i>0\\
    \ZZ/(dN/n)\ZZ,              & \text{if } i=0.
\end{cases}$$
 Accordingly,
 $$\Pic(\kp_i)\cong
\begin{cases}
    \ZZ^{\oplus 2},& \text{if } i>0\\
    \ZZ/(dN/n)\ZZ\oplus \ZZ,              & \text{if } i=0.
\end{cases}$$ 
\end{prop}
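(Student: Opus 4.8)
The plan is to read both Picard groups off the homogeneous-space presentations, exploiting the exact sequence that compares characters with the Picard group of a quotient. For a connected linear algebraic group $G$ and a closed subgroup $H$ there is an exact sequence
$$\hat{G}\xrightarrow{\ \mathrm{res}\ }\hat{H}\longrightarrow\Pic(G/H)\longrightarrow\Pic(G),$$
where $\hat{G}=\Hom(G,\GG_m)$ is the character group, cf.\ \cite{FossumeIversen}. I would apply this with $G=\GL(S^d_iV)$, whose Picard group is trivial by the facts recorded at the start of the section. The sequence then collapses to $\Pic(\kt_i)\cong\hat{H}/\mathrm{res}(\hat{G})$ for $H=\GL(V,i)$, and to the analogous statement for $\kp_i\cong\GL(S^d_iV)/\GL(V,i)_x$ with the stabiliser in place of $H$.

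The character groups come from the Levi quotients. The unipotent radical of $H=\GL(V,i)$ (the off-diagonal $\ast$-block) carries no characters, so $\hat{H}$ is the character group of the Levi $\GL(V)\times\GL(k^{\oplus i})$, namely free on $a\coloneqq\det_V$ and $b\coloneqq\det_{k^{\oplus i}}$ (only $a$ survives when $i=0$). The generator $\det$ of $\hat{G}\cong\ZZ$ restricts by the block-triangular formula $\det_{S^d_iV}=\det(S^dg)\cdot\det(m)$ together with the weight identity $\det(S^dg)=\det(g)^{dN/n}$, obtained by averaging the weights of the symmetric power, so that $\mathrm{res}(\det)=(dN/n)\,a+b$. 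For $i>0$ the coefficient $1$ of $b$ lets me eliminate that generator and $\Pic(\kt_i)\cong\ZZ$, whereas for $i=0$ the relation reads $(dN/n)\,a=0$ and $\Pic(\kt_0)\cong\ZZ/(dN/n)\ZZ$.

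The same computation applies to $\kp_i$: by (\ref{eqn:GLVx}) the Levi of $\GL(V,i)_x$ is $\GG_m\times\GL(n-1)\times\GL(k^{\oplus i})$, and writing $a=\det_V=c+a'$ as the sum of the scalar character $c$ and $a'=\det_{n-1}$ one gets $\mathrm{res}(\det)=(dN/n)(c+a')+b$. For $i>0$ this cuts the rank-three character lattice down to $\ZZ^2$, while for $i=0$ the quotient $(\ZZ c\oplus\ZZ a')/\langle(dN/n)(c+a')\rangle\cong\ZZ\oplus\ZZ/(dN/n)\ZZ$ after the unimodular change of basis $\{c,\,c+a'\}$. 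Equivalently, and more geometrically, both cases follow from the Brauer--Severi fibration $\pi_i$: in the exact sequence
$$0\longrightarrow\Pic(\kt_i)\xrightarrow{\ \pi_i^\ast\ }\Pic(\kp_i)\longrightarrow\ZZ$$
the last map records the fibrewise degree, and its image equals, by Lemma \ref{lem:Or}, the multiples of the period, i.e.\ $d\ZZ\cong\ZZ$; being free it splits off, so $\Pic(\kp_i)\cong\Pic(\kt_i)\oplus\ZZ$, reproducing the stated answers.

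The step I would treat with the most care is the exact sequence itself. The vanishing $\Pic(\GL(S^d_iV))=0$ gives surjectivity of $\hat{H}\longrightarrow\Pic(\kt_i)$, and faithfully flat descent along $G\longrightarrow G/H$ identifies $\Pic(\kt_i)$ with $H$-linearised line bundles on $G$, whose comparison with $\hat{H}$ through the associated-bundle construction produces the kernel $\mathrm{res}(\hat{G})$; none of this needs local triviality of the torsor. It then remains to check that $\det_V$ is a genuine character of $H$, which is exactly where $d\mid n$ enters, forcing the kernel $\mu_d$ of $\GL(V)\longrightarrow\GL(S^dV)$ into the kernel of $\det_V$, and that $dN/n$ is an integer, which is automatic as it is the weight of $\det$ on $S^dV$.
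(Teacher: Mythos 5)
Your argument is correct and follows essentially the same route as the paper: the character-group exact sequence for $\Pic(G/H)$ with $G=\GL(S^d_iV)$, the identity $\det_{S^dV}|_{\GL(V)}=\det_V^{dN/n}$, and the standard splitting $\Pic(\kp_i)\cong\Pic(\kt_i)\oplus\ZZ$ coming from the relative $\ko(d)$, which is exactly how the paper deduces the second half from the first. (Only a cosmetic caveat in your auxiliary direct computation for $\kp_i$: the relevant subgroup of $\GL(S^d_iV)$ is $\GL(V,i)_x/\mu_d$, whose character lattice is an index-$d$ sublattice of the full Levi lattice you use; the quotient by $\mathrm{res}(\det)$ nevertheless comes out the same, and your fibration argument covers this case anyway.)
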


\begin{proof}
The second assertion follows from the first, as the Picard group of
any Brauer--Severi variety $P\to X$ is described by $$\Pic(P)\cong\Pic(X)\oplus\ZZ,$$
where the second summand is generated by the relative $\ko(d)$ with $d$ the period of
$[P]\in\Br(X)$.\smallskip

The first assertion follows from \cite[Thm.\ 4]{Popov}: For a connected linear algebraic group $G$ with
$\Pic(G)=0$, the Picard group of the quotient by a closed subgroup $H\subset G$ is
described by the short exact sequence
\begin{equation}\label{eqn:sesChar}
\xymatrix{0\ar[r]&\chi_G(H)\ar[r]&\chi(H)\ar[r]&\Pic(G\,/\,H)\ar[r]&0.}
\end{equation}
Here, $\chi(H)$ is the character group of $H$ and $\chi_G(H)=\im(\chi(G)\to\chi(H))$.
 \smallskip

 In the case $i=0$,  we set $G=\GL(S^dV)$ and $H=\GL(V)\,/\,\mu_d\subset\GL(S^dV)$. Then,
 since $d\mid n$, the group
 $\chi(H)$ is freely generated by $\det_V$ and the group $\chi_G(H)$ is generated by the restriction of $\det_{S^dV}$ on $\GL(S^dV)$ to $\GL(V)$, which is, as observed earlier, the $(dN/n)$-th power of the determinant $\det_V$.  
Hence, $$\Pic(\kt_0)\cong\coker\left( \ZZ\cdot{\det}_V^{dN/n}\to \ZZ\cdot{\det}_V\right)\cong\ZZ/(dN/n)\ZZ.$$

For $i>0$, we set $G=\GL(S^d_iV)$ and $$H=\left(\begin{matrix}\GL(V)\,/\,\mu_d&\ast\\
0&\GL(k^{\oplus i})\end{matrix}\right).$$ Then $\Pic(\kt_i)$ is the cokernel of the restriction
map
$$\chi(\GL(S^d_iV))\cong\ZZ\cdot{\det}_{S^d_iV}\to \chi(H)=\ZZ\cdot{\det}_V\oplus\ZZ\cdot{\det}_{k^{\oplus i}},$$
where the generator $\det_{S^d_iV}$ maps to $({\det}_V^{dN/n},\det_{k^{\oplus i}})$. This proves the claim.
\end{proof}
The proposition assumes that $d$ divides $n$ and this is the only case of interest to us. Without this assumption
similar arguments as above prove that $\Pic(\kt_0)$ is cyclic of order $\gcd(d,n)N/n$ and that for $i>0$ the Picard group
$\Pic(\kt_i)$ is still cyclic of infinite order (with a different generator).

\begin{remark}\label{rem:unicover}
According to Propositions \ref{prop:Funda} and \ref{prop:Picard}, we have $$\pi_1(\kt_0)\cong\ZZ/(dN/n)\ZZ~\text{ and }~\Pic(\kt_0)\cong\ZZ/(dN/n)\ZZ.$$
Hence, the cyclic \'etale cover associated with the generator $L\in\Pic(\kt_0)$ describes the universal
cover of $\kt_0$:
$$\xymatrix{\widetilde\kt_0\cong {\bf Spec}\left(\bigoplus_{i=0}^{(dN/n)-1} L^i\right)\ar@{->>}[rr]^-{(dN/n):1}&&\kt_0}$$
\end{remark}

\subsection{Brauer group}
It is known that the kernel of the pull-back map
to the Brauer--Severi variety is spanned by its class $\pmb{\alpha}_i\coloneqq
[\kp_i]$, i.e.\ 
$$\ker\left(\Br(\kt_i)\to\Br(\kp_i)\right)=\langle\pmb{\alpha}_i\rangle.$$

In fact, $[\kp_i]$ not only generates the kernel but the entire Brauer group of
$\kt_i$, as is shown by the next proposition, which resembles a result of Saltman \cite[Thm.\ 3.3]{SaltmanBrauer}.

\begin{prop}\label{prop:BrTW}
For all $i\geq0$, the class $\pmb{\alpha}_i$ of the global universal Brauer--Severi variety  $\kp_i\to\kt_i$ gene\-rates $\Br(\kt_i)$ and the Brauer group of $\kp_i$ is trivial, i.e.\ 
$$\Br(\kt_i)=\langle\pmb{\alpha}_i\rangle~\text{ and }~\Br(\kp_i)=\{0\}.$$
\end{prop}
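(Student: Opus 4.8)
The plan is to compute $\Br(\kt_i)$ using the same group-quotient description that powered the Picard and fundamental group computations, now feeding into the appropriate cohomological machinery for the Brauer group of a quotient $G/H$. Since we work over an algebraically closed field $k$ and have already recorded that $\Br(\GL(\ell))=0$ and $\Br(\PGL(\ell))=0$, the natural tool is the exact sequence relating the Brauer group of a homogeneous space $G/H$ to the character group and Picard group of the structure group $H$, in the spirit of the sequence (\ref{eqn:sesChar}) used for $\Pic$. Concretely, for $G$ a connected linear algebraic group with $\Pic(G)=\Br(G)=0$ and $H\subset G$ a closed subgroup, there is an exact sequence continuing (\ref{eqn:sesChar}) to the right,
$$
\xymatrix{\Pic(H)\ar[r]&\Br(G/H)\ar[r]&\Br(G)=0,}
$$
so that $\Br(\kt_i)$ is a quotient of $\Pic(H)$ (after accounting for the image of $\Pic(G)=0$). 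First I would set $G=\GL(S^d_iV)$ and take $H$ to be the relevant stabiliser $\GL(V,i)/\mu_d$ as in the proof of Proposition \ref{prop:Picard}, and identify $\Pic(H)$. Here $H$ retracts onto $\GL(V)/\mu_d$, whose Picard group is the torsion group $\ZZ/d\ZZ$ coming from the $\mu_d$-quotient $\SL(V)\twoheadrightarrow\SL(V)/\mu_d$, exactly the input recorded in the preliminary list of facts about $\PGL(\ell)$ and $\SL(\ell)/\mu_\ell$.

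The key step is then to pin down the map $\Pic(H)\to\Br(\kt_i)$ and show it is surjective with the expected image. Chasing the sequence, $\Br(\kt_i)$ should come out as the cokernel of the restriction $\chi(G)\to\chi(H)$ feeding into the torsion of $\Pic(H)$; since $d\mid n$ the $\mu_d$-torsion in $\Pic(\GL(V)/\mu_d)$ survives to give $\Br(\kt_i)\cong\ZZ/d\ZZ$. I would identify the generator with $\pmb{\alpha}_i=[\kp_i]$ by comparing the connecting map $\delta$ in Remark \ref{rem:AutO(d)} (where $\delta[P]=d\cdot\alpha$ and $[\kp_i]$ arises from the class in $H^1(\kt_i,\PGL)$ of the universal family) with the torsion generator produced by the quotient sequence; both are governed by the same $\mu_d\hookrightarrow\GL(V)\to\GL(S^d_iV)$ with kernel $\mu_d$, so they agree up to sign. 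This simultaneously shows $\Br(\kt_i)=\langle\pmb{\alpha}_i\rangle$ and that the order is exactly $d=\per(\pmb{\alpha}_i)$, consistent with the period computation via Lemma \ref{lem:Or}.

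For the vanishing $\Br(\kp_i)=0$, I would use the homogeneous-space description (\ref{eqn:kphomog}), $\kp_i\cong\GL(S^d_iV)/\GL(V,i)_x$, and apply the same sequence with $H$ replaced by the point-stabiliser $\GL(V,i)_x$ of (\ref{eqn:GLVx}). The crucial difference is that $\GL(V,i)_x$ contains the block $\left(\begin{smallmatrix}k^\ast&\ast\\0&\GL(n-1)\end{smallmatrix}\right)$, whose extra $\GG_m$ factor (the $k^\ast$ in the top-left corner, acting with nontrivial weight) splits off a character that trivialises the torsion class; equivalently, the $\mu_d$ ambiguity is killed by the presence of a genuine $\GG_m$, so $\Pic(\GL(V,i)_x)$ contributes nothing to $\Br$. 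Said more directly: the kernel of $\Br(\kt_i)\to\Br(\kp_i)$ is $\langle\pmb{\alpha}_i\rangle$ by the stated result on pull-back to a Brauer--Severi variety, and this kernel is all of $\Br(\kt_i)$, so $\Br(\kp_i)$ injects into a trivial group once we know $\Br(\kp_i)$ receives nothing new from $H^1$; combining the two computations gives the claim. The main obstacle I anticipate is making the exact sequence $\Pic(H)\to\Br(G/H)\to\Br(G)$ rigorous over a general algebraically closed field (rather than just $\CC$), which requires the étale/fppf version of Popov's and Iversen's results and careful treatment of the $\mu_d$-kernel in characteristic dividing $n$; the identification of the generator with $\pmb{\alpha}_i$ is the other place demanding care, since it hinges on matching two a priori different descriptions of the same torsion class.
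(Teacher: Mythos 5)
Your proposal is correct in outline but proves the key vanishing $\Br(\kp_i)=0$ by a genuinely different mechanism than the paper. Both arguments hinge on Gabber's exact sequence $0\to\langle\pmb{\alpha}_i\rangle\to\Br(\kt_i)\to\Br(\kp_i)\to0$, which reduces everything to showing $\Br(\kp_i)=0$; the paper establishes this by writing $\kp_0\cong\PGL(S^dV)/\PGL(V)_x$ and constructing, via an explicit linear-algebra and transversality computation, a rational section of $\PGL(S^dV)\twoheadrightarrow\kp_0$, so that $\Br(\kp_0)$ injects into $\Br(\PGL(S^dV))=0$. You instead invoke the exact sequence for torsors under connected linear groups (Sansuc, Colliot-Th\'el\`ene--Sansuc), $\Pic(G)\to\Pic(H)\to\Br(G/H)\to\Br(G)$, and compute $\Pic$ of the stabilisers via Popov's sequence: for the point stabiliser $H'=\GL(V,i)_x/\mu_d$ the corner character restricts to a generator of $\chi(\mu_d)$, so $\Pic(H')=0$ and hence $\Br(\kp_i)=0$; this is correct. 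Your route buys something the paper defers to Proposition \ref{prop:piUniv}: applied to $\kt_i=G/H$ with $H=\GL(V,i)/\mu_d$ it gives $\Br(\kt_i)\cong\Pic(H)\cong\ZZ/d\ZZ$ at once (for $d\mid n$), so the period of $\pmb{\alpha}_i$ comes for free rather than via the fibration and compactification arguments of \S\! \ref{sec:periodindexUniv}. The costs are, first, dependence on the torsor exact sequence, which over a general algebraically closed field of characteristic dividing $d$ requires the fppf formulation and care with the non-smooth $\mu_d$ --- the paper's explicit section is characteristic-free and self-contained; and second, your identification of the generator with $\pmb{\alpha}_i$ via the connecting map $\delta$ is the softest step, but it is also dispensable: once $\Br(\kp_i)=0$ is known, the surjectivity in Gabber's sequence alone forces $\Br(\kt_i)=\langle\pmb{\alpha}_i\rangle$, exactly as in the paper.
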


In the next section, see Proposition \ref{prop:piUniv}, we will explain that this implies $$\Br(\kt_i)\cong\ZZ/d\ZZ$$
if $d\mid n\mid d^e$ for some $e$.

\begin{proof} We use the standard fact that
pull-back induces a short exact sequence
$$0\to\langle{\pmb\alpha}_i\rangle\to\Br(\kt_i)\to\Br(\kp_i)\to0,$$
cf.\ \cite[Part II, Thm.\ 2]{Gabber}. Thus, it suffices to show that $\Br(\kp_i)$ is trivial. \smallskip

For notational simplicity, we will restrict to the case $\kt_0$. Since
$\Br(\PGL(S^dV))=0$, it suffices to show that the projection $\PGL(S^dV)\twoheadrightarrow\kp_0$ 
admits a rational section. Fix a basis $v_1,\ldots,v_n\in V$ and consider the induced
basis $v_I\in S^dV$ with $|I|=d$. Among them there are $v_{I_1}=v_1^d,~v_{I_2}=v_1^{d-1}v_2,\ldots,v_{I_n}=v_1^{d-1}v_n$. Then for $g\in \GL(S^dV)$ we define a matrix $\left(a_{ij}\right)$, $i=2,\ldots,n$ and $j=1,\ldots,n$,
by $$g(v_{I_i})=\sum_{j=1}^na_{ij}v_{I_j}+\sum_{I\ne I_k}a_Iv_I.$$
A direct computation shows that the transversal intersection of the stabiliser $\PGL(V)_x\subset\PGL(S^dV)$ and the linear subspace cut out by the $n^2-n$ linear equations expressed by $$\left(a_{ij}\right)=
\left(\begin{matrix}0&1&0&\ldots&0\\
                              0&0&1&\ldots &0\\
                              0&0&0&\ldots &1
\end{matrix}\right)$$
consists of the unit matrix. As $n^2-n=\dim\PGL(V)_x$, this linear subspace will then also intersect the generic
fibre of $\PGL(S^dV)\to\kp_0$  transversally in one point, i.e.\ it defines a rational section.\smallskip

As an aside, the projection $\PGL(S^dV)\to\kt_0$ does not admit any rational section, for $\Br(\kt_0)$ is definitely not trivial. In particular, there is no linear subspace as above of codimension $n^2-1$ that
would intersect $\PGL(V)\subset\PGL(S^dV)$ transversally in the unit matrix.
\end{proof}

\begin{remark}
For $i\geq 1$, there exists a compactification $\kt_i\subset\bar\kt_i$
with a boundary $\partial\kt_i$ of codimension $\geq2$, see Example \ref{ex:codim2}. 
We expect $\bar\kt_i$ to be singular along $\partial\kt_i$ and, in principle, the GIT construction can be used to describe the singularities. If $\bar\kt_i$ were smooth,
then purity allows one to extend ${\pmb\alpha_i}$, i.e.\ restriction defines
an isomorphism $$\Br(\bar\kt_i)\cong\Br(\kt_i)\cong\ZZ/d\ZZ.$$
If $k=\CC$, this could be translated into $H^3(\bar\kt_i,\ZZ)_{\rm tors}\cong\ZZ/d\ZZ$, using
Proposition \ref{prop:piUniv} below, and, in particular, $\kt_i$ would not be rational.\smallskip

In the affine case $\kt_0$, the Brauer class ${\pmb \alpha}_0$ is ramified along certain
components of the boundary $\partial\kt_0\subset\bar\kt_0$. In other words, $\Br(\bar\kt_0)\to\Br(\kt_0)$ is not surjective. Indeed, otherwise there would be no ramified Brauer classes on any $X$. In other words,
the unramified Brauer group of $K(\kt_0)$ is trivial.
In the algebraic setting, this resembles a result of Saltman \cite[Thm.\ 2.9]{SaltmanBrauer}
or \cite[Cor.\ 14.30]{SaltmanLectures}.
\end{remark}

\begin{remark}\label{rem:Serre}
For arithmetic applications it could be of interest to know the Brauer group of $\kt_i$ over
a non-closed field $k$. In fact, assuming $i>0$ and ${\rm char}(k)=0$, the Hochschild--Serre spectral sequence
gives an exact sequence $\Br(k)\to\Br_1(\kt_i)\to H^1(k,\Pic(\kt_{i}\times\bar k))$, where
$\Br_1(\kt_i)=\ker(\Br(\kt_i)\to \Br(\kt_{i}\times\bar k))$. 
We use the assumption $i>0$  to ensure $H^0(\kt_i,\GG_m)=k^\ast$
and  $\Pic(\kt_i\times\bar k)\cong \ZZ$. Furthermore, the Galois action on the latter is trivial
and since $\kt_i(k)\ne\emptyset$, the Brauer group injects into $\Br(\kt_i)$. Altogether, this proves
$$\Br(k)\cong\Br(\kt_i)$$
for $i>0$ and an arbitrary field $k$ of characteristic zero. It seems likely that  the arguments can be pushed further to prove the result in full generality, but we will not pursue this here.
\end{remark}

\subsection{Period and index of the  generator}\label{sec:periodindexUniv}
After having verified that the Brauer group of $\kt_i$ is cyclic, 
we now determine the period and the index of its generator.

\begin{prop}\label{prop:piUniv}
Assume that $d$ and $n$ share the same prime factors and $d\mid n$. Then the period and the index of the universal Brauer class $\pmb{\alpha}_i$ are
given by
$$\per(\pmb{\alpha}_i)=d~\text{ and }~\ind(\pmb{\alpha}_i)=n.$$
In particular, $$\Br(\kt_i)\cong\ZZ/d\ZZ.$$
\end{prop}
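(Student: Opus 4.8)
The plan is to combine two divisibilities already at hand with their reverses, the latter coming from the universality of $\kp_i\to\kt_i$. On the one hand $\per(\pmb{\alpha}_i)\mid d$ by Lemma~\ref{lem:Or}, since $\kp_i$ carries a relative $\ko(d)$ by construction; on the other hand $\ind(\pmb{\alpha}_i)\mid n$, because $\kp_i\to\kt_i$ is a Brauer--Severi variety of relative dimension $n-1$ and the index always divides the fibre dimension plus one, i.e.\ $n$. It therefore remains to prove $d\mid\per(\pmb{\alpha}_i)$ and $n\mid\ind(\pmb{\alpha}_i)$, and the guiding idea is that the generic fibre of $\kp_i$ ought to be at least as twisted as any Brauer--Severi variety of the prescribed numerical type.

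The key external input is the classical existence theorem for central simple algebras: precisely because $d\mid n$ and $d$, $n$ share the same prime factors, there is a finitely generated field extension $F/k$ carrying a central division algebra $D$ of degree $n$ and period exactly $d$ (indeed this hypothesis is the known necessary and sufficient condition for such a $D$ to exist). Spreading $D$ out over a smooth quasi-projective model $X$ with $K(X)=F$ produces a Brauer--Severi variety $P\to X$ of relative dimension $n-1$ whose period divides $d$ and whose generic fibre represents $[D]$. As its period divides $d$, Corollary~\ref{cor:ffieldversion}, composed with the closed immersion $\kt_0\subset\kt_i$ under which $\pmb{\alpha}_i$ restricts to $\pmb{\alpha}_0$, yields a classifying morphism $\varphi\colon\Spec F\to\kt_i$ with $\varphi^\ast\pmb{\alpha}_i=[D]$.

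The period is then immediate: $\varphi^\ast\colon\Br(\kt_i)\to\Br(F)$ is a group homomorphism, so the order $d$ of $[D]=\varphi^\ast\pmb{\alpha}_i$ divides the order $\per(\pmb{\alpha}_i)$, whence $\per(\pmb{\alpha}_i)=d$. For the index, let $\xi\in\kt_i$ be the image point of $\varphi$, so that $[D]=(\pmb{\alpha}_i)_\xi\otimes_{\kappa(\xi)}F$; since extension of scalars can only lower the index, $n=\ind([D])$ divides $\ind((\pmb{\alpha}_i)_\xi)$.

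The main obstacle is the last step, propagating the index from $\xi$ to the generic point $\eta$ of $\kt_i$: one needs $\ind((\pmb{\alpha}_i)_\xi)\mid\ind((\pmb{\alpha}_i)_\eta)$, i.e.\ that the index of an Azumaya class can only drop under specialisation. This is the one non-formal point, and I would justify it by choosing a separable splitting field of $(\pmb{\alpha}_i)_\eta$ of degree equal to $\ind((\pmb{\alpha}_i)_\eta)$, spreading it to a generically finite cover of $\kt_i$ of that degree, and restricting over a neighbourhood of $\xi$ to bound $\ind((\pmb{\alpha}_i)_\xi)$; alternatively one cites this standard semicontinuity. Granting it, $n\mid\ind((\pmb{\alpha}_i)_\eta)=\ind(\pmb{\alpha}_i)$, and with $\ind(\pmb{\alpha}_i)\mid n$ we conclude $\ind(\pmb{\alpha}_i)=n$. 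Finally, Proposition~\ref{prop:BrTW} gives $\Br(\kt_i)=\langle\pmb{\alpha}_i\rangle$, so this group is cyclic generated by a class of order $\per(\pmb{\alpha}_i)=d$, and therefore $\Br(\kt_i)\cong\ZZ/d\ZZ$.
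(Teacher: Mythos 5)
Your upper bounds and your period computation are fine, and your overall strategy is the ``reversed'' argument that the paper itself sketches after the Corollary following this proposition: start from a division algebra of prescribed period and index over a function field and push the lower bounds back through the classifying map. (Two small caveats there: the existence of $D/F$ of degree $n$ and period $d$ is, as the paper notes, exactly the Corollary it \emph{deduces} from this proposition and is ``not stated explicitly as such anywhere in the literature''; the paper only takes the prime-power case as input, citing de Jong and Gabber, and assembles the general case via tensor products and \cite[Prop.~2.8.13]{GS}. Your period argument, by contrast, is genuinely simpler than the paper's, which proves $d\mid\per(\pmb{\alpha}_i)$ by a purely geometric analysis of line bundles on $\kp_i$ via the second projection $\kp_i\to\PP(S^d_iV)$ and Lemma~\ref{lem:PicBiBix}.)

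The genuine gap is the step you yourself flag: $\ind\bigl((\pmb{\alpha}_i)_\xi\bigr)\mid\ind\bigl((\pmb{\alpha}_i)_\eta\bigr)$ for the image point $\xi$ of the classifying morphism. This is not ``standard semicontinuity,'' and your sketched justification does not work: a degree-$r$ separable splitting field of $(\pmb{\alpha}_i)_\eta$ spreads out only to a \emph{generically} finite cover $Y\to\kt_i$, whose fibre over $\xi$ may be positive-dimensional or non-flat, and the class is only killed on an open subset of $Y$ that need not dominate a neighbourhood of $\xi$; likewise the chain-of-DVRs variant loses control because at each codimension-one step one only bounds the index over an \emph{extension} of the residue field. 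That specialisation of this kind is delicate is precisely why the paper devotes Proposition~\ref{prop:ratsec} and Corollary~\ref{cor:spec} (properness of the relative Grassmannian, Hilbert schemes, very general $t$ over an uncountable field) to the one-parameter case. The paper's proof of the present proposition avoids the issue entirely by using the homogeneity of $\kt_0$ under $\PGL(S^dV)$: if $\ind(\pmb{\alpha}_0)=r<n$, then $\pmb{\alpha}_0$ is represented by a Brauer--Severi variety of relative dimension $r-1$ over \emph{some} nonempty open $U\subset\kt_0$; since the action is transitive and $\pmb{\alpha}_0$ is invariant, a translate of $U$ contains $\xi$, and restricting to $\xi$ and extending to $F$ forces $\ind([D])\mid r<n$, a contradiction. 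You should either adopt this translation argument or supply an actual proof of your specialisation claim (it is true for $\xi$ on the regular scheme $\kt_i$, but the honest argument goes through the properness of $\mathrm{Gr}(r-1,\kp_i)\to\kt_i$ and a refined intersection/zero-cycle-of-degree-one argument on the fibre over $\xi$, not through spreading out splitting fields).
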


\begin{proof} Let us first show that it suffices to prove the assertion for one $i$.\smallskip

Assume it holds for $i=0$. Then use $$d=\per(\pmb{\alpha}_0)=\per(\pmb{\alpha}_i|_{\kt_0})\mid\per(\pmb{\alpha}_i)~\text{ and }~n=\ind(\pmb{\alpha}_0)=\ind(\pmb{\alpha}_i|_{\kt_0})\mid\ind(\pmb{\alpha}_i),$$
for  $\kp_i|_{\kt_0}\cong\kp_0$. As by construction, $\kp_i\to\kt_i$ comes with a relative
$\ko(d)$ and its relative dimension is $n-1$, we also  have
$\per(\pmb{\alpha}_i)\mid d$ and $\ind(\pmb{\alpha}_i)\mid n$, which proves the assertion for $i$. \smallskip

Conversely, assume the assertion holds for some $i>0$. Then use the rational classifying
map $\varphi\colon B_i\to B_0$, see Corollary \ref{cor:ffieldversion} or (\ref{eqn:proj}) in \S\! \ref{sec:Fibring}, for which  $\varphi^\ast\kp_0\cong\kp_i$ on
some open subset of $B_i$. This implies $$d=\per({\pmb \alpha}_i)\mid\per({\pmb \alpha}_0)\mid d~\text{ and 
}~n=\ind({\pmb \alpha}_i)\mid\ind({\pmb \alpha}_0)\mid n,$$ which is enough to conclude assertion for $i=0$. Clearly, the equivalence of the cases $i=0$ and $i>0$ holds separately for
the two assertions concerning the period and the index. \smallskip

Let us now prove the assertion for some $i\gg0$. We first verify that the period of $\pi\colon\kp_i\to\kt_i$ is
exactly $d$. According to Lemma \ref{lem:Or}, if its period is, say, $d'$ then there exists
a relative $\ko(d')$ which can of course be modified by any invertible sheaf in $\kt_i$.
In \S\! \ref{sec:covfam}, we will view $\kp_i$ as a closed subscheme of $ \kt_i\times\PP(S^d_iV)$ and
the fibres of the second projection $p\colon\kp_i\to\PP(S^d_iV)$
as closed subschemes $\kt_{i,x}\subset \kt_i$.
We have proved that $\Pic(\kt_i)\cong\ZZ$, see Proposition \ref{prop:Picard}, and we will later prove
that the restriction map $\Pic(\kt_i)\congpf\Pic(\kt_{i,x})$ is an isomorphism, see Lemma \ref{lem:PicBiBix}.
It is here where we use $d\mid n$.\smallskip

 Therefore, after twisting by line bundles coming from $\kt_i$, we
may assume that $\ko(d')$ restricts trivially to the fibres $\kt_{i,x}\subset \kp_i$ of $\kp_i\to\PP(S^d_iV)$. On the other hand, $\kt_{i,x}$ admits a normal projective compactification with a boundary of
codimension at least two, cf.\ Remark \ref{rem:codimBound}. Hence, $p_\ast\ko(d')$ is an invertible sheaf on $\PP(S^d_iV)$ and so of the
form $\ko(\ell)$. Hence, $\ko(d')\cong p^\ast\ko(\ell)$. Since $\ko(1)$ under the Veronese
embedding $\PP(V)\,\hookrightarrow\PP(S^dV)\,\hookrightarrow\PP(S^d_iV)$ pulls back to
$\ko(d)$, we find $d'=\ell \cdot d$. This proves  $\per({\pmb\alpha}_i)=d$, because $\ko(d)$ exists
and, hence, $d'\mid d$.\smallskip

Let us now turn to the computation of the index. Here we  use that for any prime $p$ and a positive integer $\ell$ there exists a Brauer class $\alpha$ of period $p$ and index $p^\ell$ on some quasi-projective variety $X$. For a ramified example see e.g.\ \cite[\S\! 1]{dJ} (the argument there generalises from $p=2$ to arbitrary prime $p$) and for examples of unramified classes, at least in characteristic zero, see Gabber's appendix to \cite{CT3}. 
Shrinking to an open subset of $X$ we may assume that $\alpha$ is represented by a Brauer--Severi variety $P\to X$, which is then isomorphic to the pull-back  of $\kp_0\to\kt_0=\kt_0(d,n)$,
with $n\coloneqq p^\ell$ and $d\coloneqq p$, under a classifying
morphism $\psi\colon X\to \kt_0$. Using the homogeneous nature of $\kp_0\to\kt_0$, this 
immediately proves that $\kp_0\to \kt_0(p^\ell,p)$ has index $p^\ell$. More precisely,
if the index of $\kp_0\to\kt_0$ were smaller than $p^\ell$, then its class could be represented by a Brauer--Severi variety of relative dimension $<p^\ell-1$ over
some non-empty open subset $U\subset\kt_0$  using the transitive action, we may assume that $U$ contains the image of the generic point of $X$, which would show that the class of $P\to X$ is of index $<p^\ell$.\smallskip

We now claim that then also $\kp_0\to \kt_0(p^\ell,p^m)$ for any $m>0$ has index $p^\ell$.
To this end, consider the rational classifying  map $\kt_0(p^\ell,p)\to \kt_0(p^\ell,p^m)$ associated
with $\kp_0\to\kt_0(p^\ell,p)$ but considered with the relative
$\ko(p^m)$ instead of $\ko(p)$, cf.\ Proposition \ref{prop:compa}. As in the argument before, since the index can only drop (again
the homogeneity is used), this is enough to deduce the claim.\smallskip

For the general case, let  $n=\prod p_i^{\ell_i}$ and $d=\prod p_i^{m_i}$ be the prime factorisations of the given $n$ and $d$, which are assumed to have the same prime factors. In order to prove that the index
of $\kp_0\to \kt_0(d,n)$ has index $n$, it suffices to show that its pull-back
under the classifying morphism $\prod\kt_0(p_i^{\ell_i},p_i^{m_i})\to \kt_0(d,n)$
associated with the exterior product $$\bigotimes \pi_i^\ast \kp_0(p_i^{\ell_i},p_i^{m_i})\to \prod\kt_0(p_i^{\ell_i},p_i^{m_i})$$ has index $n$. As we are working over an algebraically closed
field and so all varieties admit rational points, the individual pull-backs $\pi_i^\ast \kp_0(p_i^{\ell_i},p_i^{m_i})$ remain of period $p_i^{m_i}$ and
index $p_i^{\ell_i}$. Hence, according to a well-known fact, cf.\ \cite[Prop.\ 2.8.13]{GS}
or the general result in \cite[Thm.\ 3]{AW3},  the index of their product is then indeed $n=\prod p_i^{\ell_i}$.
\end{proof}

Ideally, we would have liked to give again a global proof for the assertion about the index, using e.g.\ the description of the cohomology of $\kt_i$ in  \S\! \ref{sec:Coh}. In fact, we expect the Brauer--Severi variety $\kp_i\to\kt_i$ to be topologically of period $d$ and index $n$, which should be reflected by the integral cohomology of $\kp_i$. Unfortunately, the integral
cohomology of $\PGL(N,\CC)$ and its classifying space is quite intricate, cf.\ \cite{AW2, Gu}.
Furthermore, the assumption that $n$ and $d$ share the same prime factors  would need to be used here and it is not obvious how this could come in. Note that up to this point in the proof above, only $d\mid n$ was used.\smallskip

The proposition has the following obvious consequence.

\begin{cor} Assume $k$ is a given field and $d$ and $n$ are two fixed positive integers with the same prime factors and such that $d\mid n$. Then there
exists a quasi-projective variety $X$ over $k$ with a Brauer class $\alpha\in\Br(X)$ of period
$\per(\alpha)=d$ and index $\ind(\alpha)=n$.\qed
\end{cor}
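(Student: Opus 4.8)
The plan is to exhibit the universal object itself as the required example: set $X \coloneqq \kt_0 = \kt_0(d,n)$ and let $\alpha \coloneqq \pmb{\alpha}_0 = [\kp_0] \in \Br(\kt_0)$ be the class of the $0$th universal Brauer--Severi variety. By construction $\kt_0 \cong \GL(S^dV)\,/\,\GL(V)$ is a homogeneous space under a linear algebraic group, hence quasi-projective over $k$; equivalently, it is the $\PGL(S^dV)$-orbit through $[\im(\nu_{d,0})]$ and thus an open subscheme of ${\rm Hilb}(\PP(S^dV))$. So the only thing left to verify is that $\per(\alpha) = d$ and $\ind(\alpha) = n$.

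Here I would note that Proposition \ref{prop:piUniv} delivers precisely these two equalities, but under the standing assumption (in force since the start of \S\! \ref{sec:PIBrPic}) that the ground field is algebraically closed. Since the corollary allows an arbitrary field $k$, I would bridge the gap by base change to $\bar k$. The construction of $\kp_0 \to \kt_0$ commutes with base change, so $(\kt_0)_{\bar k} \cong \kt_0(d,n)$ over $\bar k$ and $\alpha_{\bar k} = \pmb{\alpha}_0 \in \Br((\kt_0)_{\bar k})$. Applying Proposition \ref{prop:piUniv} over $\bar k$ yields $\per(\alpha_{\bar k}) = d$ and $\ind(\alpha_{\bar k}) = n$.

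Both period and index can only drop under restriction to $\bar k$, i.e.\ $\per(\alpha_{\bar k}) \mid \per(\alpha)$ and $\ind(\alpha_{\bar k}) \mid \ind(\alpha)$. In the other direction, $\per(\alpha) \mid d$ because $\kp_0$ carries a relative $\ko(d)$ (Lemma \ref{lem:Or}), and $\ind(\alpha) \mid n$ because $\kp_0 \to \kt_0$ has relative dimension $n-1$. Combining the two pairs of divisibilities forces $\per(\alpha) = d$ and $\ind(\alpha) = n$, as desired.

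The nearest thing to an obstacle is exactly this passage from $\bar k$ back to $k$: one must ensure that restriction to the algebraic closure does not increase the period or the index. Both facts are entirely standard, period and index being non-increasing under extension of the base field, so no genuine difficulty arises. Everything else is immediate from the universal construction, which is why the statement is recorded as an obvious consequence of Proposition \ref{prop:piUniv}.
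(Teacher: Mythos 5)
Your proposal is correct and matches the paper's intended argument: the corollary is recorded as an immediate consequence of Proposition \ref{prop:piUniv}, with $X=\kt_0$ and $\alpha=\pmb{\alpha}_0$ serving as the example. Your additional step of descending from $\bar k$ to $k$ --- using that period and index do not increase under base field extension, together with the a priori divisibilities $\per(\alpha)\mid d$ (relative $\ko(d)$) and $\ind(\alpha)\mid n$ (relative dimension $n-1$) --- correctly supplies the one detail the paper leaves implicit, namely that Proposition \ref{prop:piUniv} is proved under the standing assumption that the ground field is algebraically closed while the corollary is stated for arbitrary $k$.
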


As far as we are aware, this result is not stated explicitly as such anywhere in the literature, but it is
certainly well-known to the experts, cf.\ \cite[Rem.\ 4.7.6]{GS}. In fact, one can turn the above argument around and give an alternative argument for the key step
in the above proof: First construct $P\to \Spec(K(X))$ of dimension $n-1$ over the function field of some quasi-projective variety $X$ with given $\per([P])=d$ and $\ind([P])=n$. Then apply the universality property, see Proposition \ref{prop:localUniv} and
Corollary \ref{cor:ffieldversion}, which shows that $P$ is pulled back from $\kp_0\to\kt_0$ via
the classifying map $\Spec(K(X))\to \kt_0$. Hence, period and index of $[\kp_0]\in\Br(\kt_0)$ cannot
be smaller than $d$ and $n$, respectively.

\begin{remark}
Note that the period-index conjecture would predict that
$\ind(\pmb{\alpha}_0)\mid\per(\pmb{\alpha}_0)^{\dim\kt_0-1}$, i.e.
$n\mid d^{\dim\kt_0-1}$. As $\dim(\kt_0)=N^2-n^2$ is rather large,
this is indeed the case.
\end{remark}

\begin{remark}
For $i=0$, we can consider the universal cover $\widetilde \kt_0\to\kt_0$, which is cyclic 
of degree $\ell\coloneqq dN/n$, cf.\ Remark \ref{rem:unicover}, and it is then natural to ask about the pull-back map
$$\ZZ/d\ZZ\cong\langle{\pmb\alpha}_0\rangle\cong\Br(\kt_0)\to\Br(\widetilde\kt_0).$$
We predict that typically this maps is in fact injective. Hence, ${\pmb \alpha}_0$ and no non-trivial power
of it will be trivialised by $\widetilde\kt_0$. Indeed, otherwise one could conclude the same for
arbitrary (ramified) Brauer classes over all function fields which would prove a far too strong cyclicity result. In fact, for e.g.\ $d=n=3$, the degree of the covering $\widetilde \kt_0\to\kt_0$ is not
divisible by the period of $\pmb\alpha_0$ and so cannot split it.
\end{remark}

\begin{cor}
Assume $k=\CC$. Then $H^2_{\text{\rm \'et}}(\kt_0,\QQ/\ZZ)\cong\Br(\kt_0)\cong\ZZ/d\ZZ$
and for $i>0$, there is an extension $$0\to \QQ/\ZZ\to H^2_{\text{\rm\'et}}(\kt_i,\QQ/\ZZ)\to\Br(\kt_i)\cong\ZZ/d\ZZ\to0.$$
\end{cor}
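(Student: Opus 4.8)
The plan is to reduce everything to the Kummer sequence, feeding in the values $\Pic(\kt_i)$ from Proposition~\ref{prop:Picard} and $\Br(\kt_i)\cong\ZZ/d\ZZ$ from Proposition~\ref{prop:piUniv}. Since $\kt_i$ is smooth (it is an orbit under $\PGL(S^d_iV)$), its cohomological Brauer group $H^2_{\text{\rm \'et}}(\kt_i,\GG_m)$ is torsion and coincides with $\Br(\kt_i)$, while $H^1_{\text{\rm \'et}}(\kt_i,\GG_m)=\Pic(\kt_i)$. Hence for every $n$ the Kummer sequence $1\to\mu_n\to\GG_m\to\GG_m\to1$, with $n$-th power map on the right, yields the short exact sequence
\[
\xymatrix{0\ar[r]&\Pic(\kt_i)/n\ar[r]& H^2_{\text{\rm \'et}}(\kt_i,\mu_n)\ar[r]&\Br(\kt_i)[n]\ar[r]&0.}
\]
I would then pass to the filtered colimit over $n$ ordered by divisibility, the transition maps being induced by the inclusions $\mu_n\hookrightarrow\mu_{nm}$. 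Over $k=\CC$ these identify $\varinjlim_n\mu_n\cong\QQ/\ZZ$, so that $\varinjlim_n H^2_{\text{\rm \'et}}(\kt_i,\mu_n)\cong H^2_{\text{\rm \'et}}(\kt_i,\QQ/\ZZ)$. Since filtered colimits are exact and $A\otimes\QQ/\ZZ\cong\varinjlim_n A/n$ for any abelian group $A$, this produces the basic exact sequence
\begin{equation}\label{eqn:CorKummer}
\xymatrix{0\ar[r]&\Pic(\kt_i)\otimes\QQ/\ZZ\ar[r]& H^2_{\text{\rm \'et}}(\kt_i,\QQ/\ZZ)\ar[r]&\Br(\kt_i)\ar[r]&0,}
\end{equation}
where the right-hand identification uses that $\Br(\kt_i)=\bigcup_n\Br(\kt_i)[n]$ is torsion.

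Granting \eqref{eqn:CorKummer}, both claims follow by substitution. For $i=0$ the group $\Pic(\kt_0)\cong\ZZ/(dN/n)\ZZ$ is finite, and a finite group tensored with the divisible group $\QQ/\ZZ$ vanishes; thus \eqref{eqn:CorKummer} collapses to $H^2_{\text{\rm \'et}}(\kt_0,\QQ/\ZZ)\cong\Br(\kt_0)\cong\ZZ/d\ZZ$. For $i>0$ we have $\Pic(\kt_i)\cong\ZZ$, hence $\Pic(\kt_i)\otimes\QQ/\ZZ\cong\QQ/\ZZ$, and \eqref{eqn:CorKummer} is exactly the asserted extension
\[
\xymatrix{0\ar[r]&\QQ/\ZZ\ar[r]& H^2_{\text{\rm \'et}}(\kt_i,\QQ/\ZZ)\ar[r]&\ZZ/d\ZZ\ar[r]&0.}
\]

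The one point requiring care is the bookkeeping of the transition maps in \eqref{eqn:CorKummer}. Under $\mu_n\hookrightarrow\mu_{nm}$ the map on the left term is multiplication by $m$ on $\Pic(\kt_i)/n\to\Pic(\kt_i)/nm$, read off from the copy of $\GG_m$ carrying the connecting homomorphism, so that its colimit is indeed $\Pic(\kt_i)\otimes\QQ/\ZZ$; on the right term the maps are the inclusions $\Br(\kt_i)[n]\hookrightarrow\Br(\kt_i)[nm]$, read off from the copy of $\GG_m$ into which $\mu_n$ embeds, and their colimit is $\Br(\kt_i)$. I expect this to be the main, though entirely routine, obstacle; the substantive input has already been supplied by the computations of $\Pic(\kt_i)$ and $\Br(\kt_i)$. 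As a sanity check, over $\CC$ one also has the Artin comparison $H^2_{\text{\rm \'et}}(\kt_i,\QQ/\ZZ)\cong H^2(\kt_i^{\mathrm{an}},\QQ/\ZZ)$, but the Kummer-sequence route keeps the link to $\Pic$ and $\Br$ most transparent and is the one I would write up.
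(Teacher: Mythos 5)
Your proposal is correct and follows essentially the same route as the paper: the paper cites the standard short exact sequence $0\to\Pic(\kt_i)\otimes\QQ/\ZZ\to H^2_{\text{\'et}}(\kt_i,\QQ/\ZZ)\to\Br(\kt_i)\to0$ (phrased via the special Brauer group, with a reference to the literature) and then substitutes the computed values of $\Pic$ and $\Br$, exactly as you do. The only difference is that you derive that sequence explicitly from the Kummer sequence by passing to the colimit over $n$, which is precisely the ``standard argument'' the paper alludes to.
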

For further information on the rational cohomology of $\kt_i$ see \S\! \ref{sec:Coh}.
\begin{proof} The proof follows a standard argument, cf.\ \cite[Lem.\ 9.3]{CTSa}. The special Brauer group $\SBr(\kt_i)\cong H^2_{\text{\rm\'et}}(\kt_i,\QQ/\ZZ)$, see \cite{BrauerII,HuyMa}, sits
in a short exact sequence 
$$0\to \Pic(\kt_i)\otimes \QQ/\ZZ\to H^2_{\text{\rm\'et}}(\kt_i,\QQ/\ZZ)\to\Br(\kt_i)\cong\ZZ/d\ZZ\to0,$$
where we refer to Proposition \ref{prop:piUniv} for the isomorphism. This proves both
assertions.
\end{proof}
\subsection{Universal division algebra}
As the Brauer--Severi variety $\kp_i\to \kt_i$ is of relative dimension $n-1$ and of index $n$,
see Proposition \ref{prop:piUniv}, 
the associated Azumaya sheaf $\ka_i$ is a division algebra at the generic point, i.e.\
$D_i\coloneqq \ka_i\otimes K(\kt_i)$ is a central division algebra over the function field $K(\kt_i)$.\smallskip

To simplify the notation, let us restrict to the case $i=0$. Then $\ka=\ka_0$ is the locally free sheaf
of algebras that can be described as the homogeneous bundle $$\PGL(S^dV)\times_{\PGL(V)}(V\otimes V^\vee)$$
on $B_0=\PGL(S^dV)/\PGL(V)$. Since $\kt_0$ is affine, $\ka$ is the sheaf associated to the
$\ko(\kt_0)$-algebra $A=H^0(B_0,\ka)$, which can be understood as the space of
$\PGL(V)$-equivariant morphisms $\PGL(S^dV)\to V\otimes V^\vee$.\smallskip

Note that by the relative Euler sequence for Brauer--Severi varieties, there exists
a natural projection $A\to H^0(Q_0,{\mathcal T}_\pi)$, where ${\mathcal T}_\pi$ is the relative
tangent sheaf of the projection $\pi\colon \kp_0\to \kt_0$. For  generic $s\in A$ and its
image $\bar s\in H^0(\kp_0,{\mathcal T}_\pi)$, the zero set $Z(\bar S)\subset Q_0$ defines
a generically finite morphism $Z(\bar s)\to \kt_0$ of degree $n$ that splits $\pmb \alpha_0$.\smallskip

Passing to the generic points defines a splitting extension $L/K(\kt_0)$ for $\pmb\alpha_0\in \Br(K(B_0))$.
It would be interesting and should in principle be possible to compute its Galois group $G(d,n)\coloneqq{\rm Gal}(L/K(\kt_0))$. The Galois group of any other Brauer--Severi variety $P\to X$ of index $n$ and period $d$ is then isomorphic to a subgroup of $G(d,n)$.

\section{The geometry of the universal Brauer--Severi variety}\label{sec:contr}
This section discusses some basic features of the geometry of the universal Brauer--Severi varieties
$B_i$ beyond their Picard, Brauer and fundamental group. We will first consider the natural fibrations
over the Grassmannians, which will lead us to coverings by subvarieties with trivial Brauer groups and eventually give a complete description of their rational cohomology.

\subsection{Fibring $\kt_i$ by $\kt_0$}\label{sec:Fibring}
In \S\! \ref{sec:comp} we already encountered the morphism
\begin{equation}\label{eqn:quotp}
p_i\colon \kt_i\to {\rm Gr}_i\coloneqq{\rm Gr}(N,S^d_iV),~[Z]\mapsto \langle Z\rangle,
\end{equation}
which there occurred naturally as a morphism of quotients
$$p_i\colon\kt_i\cong U_N/\GL(V)\twoheadrightarrow U_N/\GL(S^dV)\cong{\rm Gr}_i.$$
The projection $p_i$ can alternatively be described as the classifying morphism
for the quotient bundle $ S^d_iV^\vee\otimes\ko_{\kt_i}\twoheadrightarrow\pi_{i\ast}\ko_{\pi_i}(d)$,
where $\pi_i\colon \kp_i\to \kt_i$ is the projection. It is not difficult to see that $p_i\colon\kt_i\to{\rm Gr}_i$ is a Zariski locally trivial fibration with fibres isomorphic to $\kt_0\cong \GL(S^dV)\,/\,\GL(V)$, see also Remark \ref{rem:contr} below. In fact, if $\GL(S^dV)\subset \GL(S^d_iV)$
is the natural embedding defined by the fixed splitting $S^dV\oplus k^{\oplus i}=S^d_iV$, then
the natural action of $\GL(S^d_iV)$ on $\kt_i$ restricts to an action of
$\GL(S^dV)$, the quotient of which is again
$p_i\colon \kt_i=\GL(S^d_iV)\,/\,\GL(V,i)\twoheadrightarrow{\rm Gr}(N,S^d_iV)\cong\kt_i\,/\,\GL(S^dV)$.\smallskip

The fibration sheds a new light on the computations  of the various groups, $\Br$, $\Pic$, and $\pi_1$, 
which we presented in each case for $\kt_0$ and $\kt_i$, $i>0$ separately.\smallskip

$\bullet$ The long exact sequence of homotopy groups reads:\TBC{Need reference for Grassmannian. Maybe Bott--Tu, Hatcher, Milnor... Follows from long homotopy sequence for
${\rm Gr}_i(N,W)=U(n)/U(N)\times U()$. Is there an algebraic version, positive char.}
$$\xymatrix@R=0.1pt{\cdots\ar[r]&\pi_2({\rm Gr}_i)\ar[r]&\pi_1(\kt_0)\ar[r]&\pi_1(\kt_i)\ar[r]&\pi_1({\rm Gr_i}).\\
&\cong \ZZ&\cong\ZZ/(dN/n)\ZZ&\cong0&\cong0}$$

$\bullet$ From the Leray spectral sequence one obtains the exact sequence \TBC{There is something to say here: The left hand term should be $H^1({\rm Gr}_i(N,S^d_iV),p_\ast\GG_m)$ which should just be $H^1({\rm Gr}_i(N,S^d_iV),\GG_m)$.}
$$\xymatrix@R=0.1pt{\cdots\ar[r]&\Pic({\rm Gr}_i)\ar[r]&\Pic(\kt_i)\ar[r]&H^0({\rm Gr}_i,R^1p_{i\ast}\GG_m)\ar[r]&H^2({\rm Gr}_i,p_{i\ast}\GG_m).\\
&\cong\ZZ&\cong \ZZ&\cong H^0({\rm Gr}_i,\ZZ/(dN/n)\ZZ)\\
&&&\cong \ZZ/(dN/n)\ZZ}$$
It is not difficult to see that the restriction map $\Pic(\kt_i)\to\Pic(\kt_0)$ is surjective. This shows that the generator $\ko(1)\in \Pic({\rm Gr}_i)$, the Pl\"ucker line bundle, pulls back to the $\ell\coloneqq(dN/n)$-th multiple of the 
ample generator $L_i$ of $\Pic(\kt_i)\cong\ZZ$.\smallskip

Assume $0\ne s\in H^0({\rm Gr}_i,\ko(1))$ is a section with its zero locus $Z(s)\subset{\rm Gr}_i$, a Schubert cycle of codimension one. Then $p_i^\ast s\in H^0(\kt_i,L_i^\ell=p_i^\ast \ko(1))$ and the standard
 construction induces a cyclic covering $\widetilde\kt_i\to\kt_i$ of degree $\ell$ which
 is ramified along $p_i^{-1}Z(s)$. Restricted to any fibre of $\kt_i\to{\rm Gr}_i$ over a point in ${\rm Gr}_i\,\setminus\,Z(s)$, it gives back the universal cover of the fibre $\widetilde\kt_i|_{\kt_0}\cong\widetilde\kt_0\to\kt_0$,
 cf.\ Remark \ref{rem:unicover}.

\begin{remark}\label{rem:contr}
To be more specific, consider the Pl\"ucker embedding ${\rm Gr}_i\,\hookrightarrow \PP(\bigwedge^{N}S^d_iV)$ and let $s=s_1^\ast\wedge\cdots\wedge s_{N}^\ast\in H^0({\rm Gr},\ko(1))$,
where $s_1,\ldots,s_{N}\in S^dV$ is a fixed basis. Then the open subset $U\coloneqq {\rm Gr}_i\,\setminus\,Z(s)$ parametrises all subspaces $A\subset S^d_iV$  mapping isomorphically onto $S^dV$
under the projection $S^d_iV=S^dV\oplus k^{\oplus i}\twoheadrightarrow S^dV$. Clearly, $U$ is isomorphic to an affine space; it is one of the standard affine charts of the Grassmannian.
\smallskip

Over the open set $U$ the space $\kt_i$ contracts to $\kt_0$, for projecting to
$\PP(S^dV)$ defines a morphism
\begin{equation}\label{eqn:proj}
\kt_i\supset p_i^{-1}(U)\twoheadrightarrow \kt_0
\end{equation}
which is the identity on the fibre $\kt_0=p^{-1}([S^dV])$. The fibres of $p_i^{-1}(U)\to\kt_0$
are affine spaces and $\kp_i\to\kt_i$ restricted to $p_i^{-1}(U)$ is the pull-back of $\kp_0\to\kt_0$:
$$\xymatrix{\kp_i\ar[d]&\ar@{_(->}[l]~\kp_i|_{p_i^{-1}U}\ar@{}[dr]|\times\ar[d]\ar[r]&\kp_0\ar[d]\\
\kt_i\ar[d]_{p_i}&p_i^{-1}U\ar@{_(->}[l]\ar[d]\ar[r]&\kt_0\\
{\rm Gr}_i&\ar@{_(->}[l]U.&}$$
\end{remark}

Another description of the projection  (\ref{eqn:proj}) goes as follows. Observe that the quotient bundle $S^d_iV^\vee\otimes\ko_{\kt_i}\twoheadrightarrow \pi_{i\ast}\ko_{\pi_i}(d)$
trivialises naturally over the pre-image $p_i^{-1}(U)$. Then  (\ref{eqn:proj}) is the classifying map $\varphi\colon p_i^{-1}(U)\to \kt_0$ defined in \S\! \ref{sec:ClassmapsI} associated with the Brauer--Severi variety $\kp_i|_{p_i^{-1}(U)}\to p_i^{-1}(U)$. 
\subsection{Rational sections} The projection $p_i\colon\kt_i\twoheadrightarrow {\rm Gr}_i$
admits sections over the  standard open charts
$U\cong {\mathbb A}^{N\cdot i}\subset {\rm Gr}_i$. These are the fibres of (\ref{eqn:proj}), which
 can also be described more explicitly as follows.
\smallskip

Recall that a standard affine chart ${\mathbb A}^{N\cdot i}\cong U_{\!A_0}\subset {\rm Gr}_i$
is defined as the image of $$\Hom(A_0,S^d_iV/A_0)\cong{\mathbb A}^{N\cdot i}\,\hookrightarrow
{\rm Gr}_i,~\xi\mapsto ({\rm id}+\xi)(A_0),$$ where  $[A_0]\in {\rm Gr}_i$ is any point and a decomposition
$S^d_iV=A_0\oplus S^d_iV/A_0$ is chosen. Hence, 

\begin{equation}\label{eqn:sec}\left(\begin{matrix} 1&0\\\Hom(A_0,S^d_iV/A_0)&1\end{matrix}\right)\subset \GL(S^d_iV)
\end{equation} determines a section of the quotient
$\GL(S^d_iV)\twoheadrightarrow {\rm Gr}_i={\rm Gr}(N,S^d_iV)$ over the open chart $U_{\!A_0}\subset {\rm Gr}_i$.
The  image $\widetilde U_{\!A_0}\subset \kt_i$ of (\ref{eqn:sec})  under the projection $\GL(S^d_iV)\twoheadrightarrow \kt_i
\cong \GL(S^d_iV)\,/\,\GL(V,i)$  then determines a section of $\kt_i\to {\rm Gr}_i$ over the
open chart $U_{\!A_0}\subset {\rm Gr}(N,S^d_iV)$. We thus have proved the following result.

\begin{prop}\label{prop:covGr}
The base of the global universal Brauer--Severi variety $\kt_i$ is covered by affine
spaces of codimension $N^2-n$ $$\widetilde U_{\!A_0}={\mathbb A}^{N\cdot i}\subset \kt_i$$ that define sections of the projection
$\kt_i\twoheadrightarrow {\rm Gr}_i$ over the standard open charts $U_{\!A_0}\subset {\rm Gr}_i$.\qed
\end{prop}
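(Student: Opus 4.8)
The plan is to combine the $\GL(S^d_iV)$-equivariance of the fibration $p_i\colon\kt_i\twoheadrightarrow{\rm Gr}_i$ with the classical big-cell description of the Grassmannian. The presentations $\kt_i\cong\GL(S^d_iV)\,/\,\GL(V,i)$ and ${\rm Gr}_i\cong\GL(S^d_iV)\,/\,\Stab(S^dV)$ are compatible: since $\GL(V,i)\subseteq\Stab(S^dV)$, the composite $\GL(S^d_iV)\to\kt_i\to{\rm Gr}_i$ (the second arrow being $p_i$) is the orbit map $g\mapsto g\cdot S^dV$. I would therefore first construct one section over the standard chart around the reference point $[S^dV]$, and then sweep out all of $\kt_i$ by translating it with the transitive action.

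For the reference chart, fix the splitting $S^d_iV=S^dV\oplus k^{\oplus i}$ and let $U_-\subset\GL(S^d_iV)$ be the block lower-unipotent subgroup (\ref{eqn:sec}). The orbit map carries $U_-$ isomorphically onto the standard chart $U_{\!S^dV}$, because $u_\xi\cdot S^dV=(\id+\xi)(S^dV)$ runs through exactly these graphs as $\xi$ ranges over $\Hom(S^dV,k^{\oplus i})$. Composing with $\GL(S^d_iV)\to\kt_i$ yields $\widetilde U_{\!S^dV}$, and the point to verify is that this composite is injective: an element of $U_-\cap\GL(V,i)$ is simultaneously block lower-unipotent and block upper-triangular, which forces it to be the identity. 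As $p_i$ restricted to $\widetilde U_{\!S^dV}$ then inverts the projection, $\widetilde U_{\!S^dV}$ is a genuine section of $p_i$ over $U_{\!S^dV}$, isomorphic to $U_-\cong{\mathbb A}^{N\cdot i}$. Its codimension is $\dim\kt_i-N\cdot i=N^2-n^2$, i.e.\ the dimension of the fibre $\kt_0$ of $p_i$, as one expects for a section.

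To reach the remaining charts and obtain the covering, I would invoke transitivity of $\GL(S^d_iV)$ on $\kt_i$. For any $g$, equivariance of $p_i$ shows that $g\cdot\widetilde U_{\!S^dV}$ is again a section over the standard chart $U_{\!gS^dV}=g\cdot U_{\!S^dV}$, again an affine space ${\mathbb A}^{N\cdot i}$, and is exactly a section of the same type as (\ref{eqn:sec}), attached to the decomposition $S^d_iV=gS^dV\oplus gk^{\oplus i}$ but based at $g\cdot o$ with $o=[\id]$. Since every point of $\kt_i$ is of the form $g\cdot o$ and lies on $g\cdot\widetilde U_{\!S^dV}$, these sections $\widetilde U_{\!A_0}$ cover $\kt_i$, which is the claim.

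The step demanding genuine care is precisely this covering. It is tempting to keep the base point $o$ fixed and simply vary the decomposition $A_0\oplus C$ in (\ref{eqn:sec}); however all the resulting sections then pass through $o$, and a dimension count inside the fibre over $[S^dV]$ shows that they fail to reach the remaining points of that fibre. The correct remedy is to let the base point move, realising each section as a $\GL(S^d_iV)$-translate of $\widetilde U_{\!S^dV}$: transitivity forces the translates to sweep out $\kt_i$, while equivariance keeps each of them a section over a standard chart. The two remaining ingredients, the big-cell isomorphism $U_-\cong U_{\!S^dV}$ and the triviality of $U_-\cap\GL(V,i)$, are routine block-matrix verifications.
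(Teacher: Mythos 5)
Your proposal is correct and follows the same route as the paper: the block lower\-/unipotent subgroup (\ref{eqn:sec}) is the big-cell section of $\GL(S^d_iV)\twoheadrightarrow{\rm Gr}_i$, its image in $\kt_i$ is a section of $p_i$ over the standard chart, and your explicit use of $\GL(S^d_iV)$-translates to produce the covering is precisely the careful reading of the paper's ``where $[A_0]\in{\rm Gr}_i$ is any point'' --- as you rightly observe, the base point of the orbit map $\GL(S^d_iV)\to\kt_i$ must move along with $A_0$, otherwise the construction does not even yield sections over the other charts, let alone a covering. One further remark in your favour: your codimension count $\dim\kt_i-N\cdot i=N^2-n^2=\dim\kt_0$ is the correct one, so the ``$N^2-n$'' in the statement is evidently missing an exponent.
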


Recall that the Brauer group of an affine space is trivial and so $\Br(\widetilde U_{\!A_0})$ is trivial. Hence,
any family of smooth subvarieties $Y_t\subset \kt_i$ such that generically $Y_t$ is contained in one of the $\widetilde U_{\!A_0}$, i.e.\ $\eta_{Y_t}\in \widetilde U_{\!A_0}$, has the property that the
restriction $\ZZ/d\ZZ\cong \Br(\kt_i)\to\Br(Y_t)$ is trivial.\smallskip


\subsection{Covering family}\label{sec:covfam}
There is another covering family of $\kt_i$, $i\geq0$, to which the generator ${\pmb \alpha}_i\in \Br(\kt_i)$ restricts trivially.\smallskip

By construction, $\pi_i\colon\kp_i\to\kt_i$ is the universal family of subschemes in $\PP(S^d_iV)$ that 
are projective equivalent to the image of the Veronese embedding
$\nu_{d,i}\colon \PP(V)\,\hookrightarrow\PP(S^dV)\,\hookrightarrow\PP(S^d_iV)$.
Hence, $\kp_i$ can be viewed as a closed subscheme  $$\kp_i\subset \kt_i\times\PP(S^d_iV)$$
and, due to the homogeneous nature of the construction, the fibres $ q_i^{-1}(x)$ of the induced projection $q_i\colon\kp_i\twoheadrightarrow \PP(S^d_iV)$ are all isomorphic to each other. They shall be considered
as closed subschemes $$\kt_{i,x}\coloneqq \pi_i(q_i^{-1}(x))\subset \kt_i.$$

Since ${\pmb \alpha}_i\in \Br(\kt_i)$ pulls back to the trivial class on $\kp_i$, all the restrictions
${\pmb \alpha}_i|_{\kt_{i,x}}\in\Br(\kt_{i,x})$ are trivial. Presumably,  the Brauer group of $\kt_{i,x}$ itself is not.\smallskip

In comparison to Proposition \ref{prop:covGr}, the covering family thus produced consists of closed subschemes of $\kt_i$.

\begin{prop}
The base $\kt_i$ of the global universal Brauer--Severi variety $\kp_i\to\kt_i$ admits an isotrivial covering family $$\bigcup_{x\in\PP(S^d_iV)}\kt_{i,x}=\kt_i$$
by smooth closed subschemes $\kt_{i,x}\subset\kt_i$, $x\in \PP(S^d_iV)$, of codimension $N_i-n$, such that the restriction map 
$$\langle{\pmb\alpha}_i\rangle\cong \Br(\kt_i)\to \Br(\kt_{i,x})$$ is trivial.\qed
\end{prop}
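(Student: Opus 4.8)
The plan is to identify each $\kt_{i,x}$ with a fibre of the projection $q_i\colon\kp_i\to\PP(S^d_iV)$ and then read off all the claims from the homogeneity of $\kp_i$ under $\PGL(S^d_iV)$. First I would record that, by construction, $\kp_i\subset\kt_i\times\PP(S^d_iV)$ and $q_i^{-1}(x)=\kp_i\cap(\kt_i\times\{x\})$, so that the first projection restricts to an isomorphism $q_i^{-1}(x)\cong\kt_{i,x}$ of closed subschemes of $\kt_i$ (the projection $\kt_i\times\{x\}\cong\kt_i$ being an isomorphism, this is a scheme isomorphism and not merely a bijection on points). The covering property is then immediate: every $[Z]\in\kt_i$ corresponds to a non-empty subscheme $Z\subset\PP(S^d_iV)$, which contains some point $x$, whence $[Z]\in\kt_{i,x}$.

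For smoothness and isotriviality I would use that $\kp_i\cong\PGL(S^d_iV)\,/\,\PGL(V,i)_x$ is homogeneous, cf.\ (\ref{eqn:kphomog}), and that $q_i$ is $\PGL(S^d_iV)$-equivariant. Transitivity of the action on $\kp_i$ shows that the stabiliser $\Stab(x)\subset\PGL(S^d_iV)$ of $x\in\PP(S^d_iV)$ acts transitively on $q_i^{-1}(x)$: given $([Z],x)$ and $([Z'],x)$, a $g$ with $g([Z],x)=([Z'],x)$ automatically satisfies $g\cdot x=x$. Hence each fibre is homogeneous, in particular smooth. Equivariance and transitivity of $\PGL(S^d_iV)$ on $\PP(S^d_iV)$ give, for any $x,x'$ and any $h$ with $h\cdot x=x'$, an isomorphism $h\colon q_i^{-1}(x)\cong q_i^{-1}(x')$, that is $\kt_{i,x}\cong\kt_{i,x'}$; this is the isotriviality. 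In particular all fibres are equidimensional, so the codimension follows from a single dimension count: using $\dim\kp_i=\dim\kt_i+(n-1)$, $\dim\PP(S^d_iV)=N_i-1$, and $\dim\kt_i=N^2+iN-n^2$, one finds $\dim\kt_{i,x}=\dim q_i^{-1}(x)=\dim\kt_i-(N_i-n)$, so the codimension is $N_i-n$ as claimed.

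Finally, the triviality of $\Br(\kt_i)\to\Br(\kt_{i,x})$ is the point anticipated just before the statement, and it follows formally from $\Br(\kp_i)=0$ (Proposition \ref{prop:BrTW}). Under the identification $\kt_{i,x}\cong q_i^{-1}(x)\subset\kp_i$, the inclusion $\kt_{i,x}\subset\kt_i$ factors as $q_i^{-1}(x)\subset\kp_i\xrightarrow{\pi_i}\kt_i$; hence the restriction of $\pmb{\alpha}_i$ to $\kt_{i,x}$ is the further restriction of $\pi_i^\ast\pmb{\alpha}_i\in\Br(\kp_i)$ to the fibre. Since $\Br(\kp_i)=0$ already kills $\pi_i^\ast\pmb{\alpha}_i$, the restriction vanishes.

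I do not expect a serious obstacle here; once the identification $\kt_{i,x}\cong q_i^{-1}(x)$ is in place the argument is essentially bookkeeping. The only points needing care are that this identification is an isomorphism of schemes (guaranteed because $q_i^{-1}(x)$ is cut out inside $\kt_i\times\{x\}$) and that all fibres of $q_i$ are smooth and equidimensional, both of which are handled by the homogeneity of $\kp_i$ and the equivariance of $q_i$; the Brauer-theoretic conclusion is then an immediate consequence of $\Br(\kp_i)=0$.
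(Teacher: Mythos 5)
Your proposal is correct and follows essentially the same route as the paper, which proves the proposition implicitly in the discussion preceding the statement: identify $\kt_{i,x}$ with the fibre $q_i^{-1}(x)$ of $q_i\colon\kp_i\subset\kt_i\times\PP(S^d_iV)\to\PP(S^d_iV)$, use the homogeneity of $\kp_i$ under $\PGL(S^d_iV)$ and the equivariance of $q_i$ for smoothness, isotriviality and the codimension count, and deduce the vanishing of $\pmb{\alpha}_i|_{\kt_{i,x}}$ from the factorisation of the inclusion through $\kp_i$. The only cosmetic difference is that you invoke the full vanishing $\Br(\kp_i)=0$ from Proposition \ref{prop:BrTW}, whereas the paper only uses the weaker (and standard) fact that $\pmb{\alpha}_i$ lies in the kernel of $\pi_i^\ast\colon\Br(\kt_i)\to\Br(\kp_i)$.
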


The situation looks different for the Picard group. This is the following result which was
 used in the proof of Proposition \ref{prop:piUniv}.

\begin{lem}\label{lem:PicBiBix} As before, we assume that $d$ divides $n$.
The restriction defines an isomorphism
$$\Pic(\kt_i)\congpf \Pic(\kt_{i,x}).$$
\end{lem}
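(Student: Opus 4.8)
The plan is to present $\kt_{i,x}$ as a homogeneous space under a parabolic subgroup of $G\coloneqq\GL(S^d_iV)$ and then to run Popov's exact sequence (\ref{eqn:sesChar}) in parallel with the computation of $\Pic(\kt_i)$ in Proposition \ref{prop:Picard}. Since $\PGL(S^d_iV)$ acts transitively on $\PP(S^d_iV)$ and the formation of $\kt_{i,x}$ is equivariant, all the $\kt_{i,x}$ are isomorphic with conjugate inclusions into $\kt_i$; as $\PGL(S^d_iV)$ is connected and hence acts trivially on $\Pic(\kt_i)$, it is enough to treat a single $x$, which I take on the reference Veronese, $x=\nu_{d,i}([v])=[v^d]$ for a basis vector $v\in V$.

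First I would identify the subvariety. With $H\coloneqq\GL(V,i)_x$ the stabiliser of the base point of $\kp_i$, cf.\ (\ref{eqn:kphomog}) and (\ref{eqn:GLVx}), the second projection $q_i\colon\kp_i=G/H\to\PP(S^d_iV)=G/P$ is precisely the quotient by the maximal parabolic $P\coloneqq\Stab_G([v^d])$ stabilising the line $k\cdot v^d$, and $H\subset P$. Thus the fibre $q_i^{-1}(x)=P/H$ is a single $P$-orbit; as $\pi_i$ is injective on it with image $\{[Z]\mid x\in Z\}=\kt_{i,x}$, I obtain $\kt_{i,x}\cong P/H$, smooth of codimension $N_i-n$, with point stabiliser $P\cap\GL(V,i)=H$. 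Under this description the inclusion $\kt_{i,x}\subset\kt_i$ is the morphism of quotients induced by $H\subset\GL(V,i)$.

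Next I would apply (\ref{eqn:sesChar}) to $\kt_i=G/\GL(V,i)$ and to $\kt_{i,x}=P/H$. The first is Proposition \ref{prop:Picard}. For the second I need $\Pic(P)=0$: this holds because $P$ retracts, as a variety, onto its Levi factor $\GG_m\times\GL(N_i-1)$, a product of general linear groups of trivial Picard group, its unipotent radical being an affine space. Restriction of characters along $H\subset\GL(V,i)$ fits the two sequences into a commutative ladder, so that the assertion reduces to the induced map of cokernels $\Pic(\kt_i)\to\Pic(\kt_{i,x})$ being an isomorphism.

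Finally, the computation contains the one genuine subtlety, namely the place where $d\mid n$ enters. Because $\GL(V)\to\GL(S^dV)$ has kernel $\mu_d$ (the scalars $\zeta\,\mathrm{id}$ with $\zeta^d=1$), the characters of $H$ are exactly those of the abstract group (\ref{eqn:GLVx}) that are trivial on $\mu_d$. Writing $\lambda,\mu,\delta$ for the eigenvalue on $k v$, the determinant of the induced action on $V/kv$, and $\det_{k^{\oplus i}}$, and using $d\mid n$ so that $n-1\equiv-1$, this cuts $\chi(H)$ down to the sublattice $\{p\lambda+q\mu+r\delta\mid p\equiv q\ (\mathrm{mod}\ d)\}$. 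The free group $\chi(P)$ is spanned by $\det_{S^d_iV}$ and the eigenvalue $a$ on the line $k v^d$; restricting along $H\subset P$ gives $a|_H=d\lambda$ and $\det_{S^d_iV}|_H=\tfrac{dN}{n}(\lambda+\mu)+\delta$, where I use that $\det_{S^dV}$ restricts to $\det_V^{dN/n}$ on $\GL(V)$. A short lattice computation then yields $\coker\bigl(\chi_P(H)\to\chi(H)\bigr)\cong\ZZ$ for $i>0$ (resp.\ $\cong\ZZ/(dN/n)\ZZ$ for $i=0$), generated in both cases by the image of $\det_V=\lambda+\mu$. Since the generator of $\Pic(\kt_i)$ in Proposition \ref{prop:Picard} is also $\det_V$ and restriction sends its class to that of $\det_V|_H$, the map carries generator to generator and is an isomorphism. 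The hard part is exactly this $\mu_d$-bookkeeping: ignoring it would spuriously contribute a $\ZZ/d\ZZ$ summand, so one must track precisely which characters descend to $H$, and it is there that the hypothesis $d\mid n$ is indispensable.
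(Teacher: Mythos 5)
Your proof is correct and follows essentially the same route as the paper: realise $\kt_{i,x}$ as a homogeneous space $P/H_x$ under the parabolic stabilising $x$, apply Popov's sequence (\ref{eqn:sesChar}) to both $\kt_i$ and $\kt_{i,x}$, and compare the two cokernels via the commutative ladder of restriction maps on character lattices. Your explicit identification of $\chi(H_x)$ as the sublattice $\{p\lambda+q\mu+r\delta \mid p\equiv q \pmod d\}$ --- rather than the full lattice spanned by the three block determinants --- is precisely the $\mu_d$-bookkeeping that the paper leaves implicit, and it is indeed the step where the hypothesis $d\mid n$ is used.
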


\begin{proof} The arguments are very similar to the ones in the proof of Proposition \ref{prop:Picard}.
We will restrict to the case $i>0$, which is the one that was used in \S\! \ref{sec:periodindexUniv}.

First, as a homogeneous space we can describe $\kt_{i,x}$
as $$\kt_{i,x}\cong\GL(S^d_iV)_x\,/\, H_x.$$
Here, $H_x\subset \GL(S^d_iV)_x$ is the subgroup obtained
from $\GL(V,i)_x$, see (\ref{eqn:GLVx}), by dividing out by the action of $\mu_d$ on the first block.
The group $\GL(S^d_iV)_x$ is the stabiliser of the $\GL(S^d_iV)$-action on $\PP(S^d_iV)$ at a point $x\in\PP(S^dV)\subset\PP(S^d_iV)$.\smallskip

We use \cite{Popov} to describe $\Pic(\kt_{i,x})$ and to compare it to $\Pic(\kt_i)$ by the following commutative diagram of exact horizontal sequences
$$\xymatrix@R=5pt{\cong \ZZ\cdot\det_{S^d_iV}&\cong\ZZ\cdot\det_V\oplus\ZZ\cdot\det_{k^{\oplus i}}&\\
\chi(\GL(S^d_iV))\ar[r]\ar[dd]&\chi(H)\ar[r]\ar[dd]&\Pic(\kt_{i})\ar[r]\ar[dd]&0\\
&&&\\
\chi(\GL(S^d_iV)_x)\ar[r]&\chi(H_x)\ar[r]&\Pic(\kt_{i,x})\ar[r]&0.}$$
The vertical maps are the restriction for the character groups and the Picard group.
\smallskip

Writing out $\GL(S^d_iV)_x$ and $H_x$ as matrix groups, their character groups are computed as
$$\chi(\GL(S^d_iV)_x)\cong\ZZ\cdot {\det}_k\oplus\ZZ\cdot{\det}_{k^{\oplus N_i-1}}~\text{ and }~\chi(H_x)\cong\ZZ\cdot{\det}_k\oplus \ZZ\cdot {\det}_{k^{\oplus n-1}}\oplus\ZZ\cdot{\det}_{k^{\oplus i}}.$$
(If $d$ does not divide $n$, then the description of $\chi(H_x)$ changes. In this case, the restriction
$\Pic(\kt_i)\to\Pic(\kt_{i,x})$ will have a non-trivial finite cokernel, cf.\ the comment after Proposition
\ref{prop:Picard}.)\smallskip

Following  the maps between the character groups as in the proof of Proposition \ref{prop:Picard},
one eventually deduces that the restriction indeed yields $\ZZ\cong\Pic(\kt_i)\congpf\Pic(\kt_{i,x})$.
\end{proof}

\subsection{Cohomology}\label{sec:Coh}
Our next goal is to compute the cohomology of the universal Brauer--Severi 
varieties $\pi_i\colon\kp_i\to\kt_i$.  We will restrict to the case $k=\CC$. Since the Leray--Hirsch theorem computes the rational cohomology of $\kp_i$ in terms of that of $\kt_i$,  we will restrict to the latter. Here is the easiest case.

\begin{prop}
The cohomology of $B_0\cong\PGL(S^dV)/\PGL(V)$ is given by the graded exterior algebra
$$\xymatrix{H^*(B_0,\QQ) = \bigwedge^\ast\nolimits_{\QQ}\langle\xi_{2n+1}, \xi_{2n+3}, \dots, \xi_{2N-1}\rangle,}$$
where $n=\dim V$, $N=\dim S^dV$ and each $\xi_i$ is in degree $i$. In particular, the Poincar\'e polynomial of $B_0$ is given by
\[ P_{B_0}(t) = \sum_{k \ge 0} \dim_{\mathbb{Q}} H^k(B_0,\QQ) t^k = \prod_{j=n+1}^N (1 + t^{2j-1}). \]
\end{prop}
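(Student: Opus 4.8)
The plan is to compute $H^*(B_0,\QQ)$ directly from the presentation $B_0 \cong \PGL(S^dV)/\PGL(V) \cong \GL(S^dV)/\GL(V)$ using the rational cohomology of the classical groups and the Leray--Serre spectral sequence of the fibration
\[
\GL(V) \longrightarrow \GL(S^dV) \longrightarrow B_0.
\]
Recall that over $\QQ$ the cohomology of $\GL(\ell,\CC)$ (homotopy equivalent to $U(\ell)$) is the exterior algebra $\bigwedge^* \langle e_1, e_3, \dots, e_{2\ell-1}\rangle$ on generators $e_{2j-1}$ in odd degrees $2j-1$, $j=1,\dots,\ell$. So I would set $G = \GL(S^dV)$ with generators in degrees $1,3,\dots,2N-1$ and $H = \GL(V)$ with generators in degrees $1,3,\dots,2n-1$.

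**The key step** is to understand the restriction map $H^*(BG,\QQ) \to H^*(BH,\QQ)$ on classifying spaces, or equivalently to track how the transgression behaves. The homogeneous space $G/H$ fits into the fibration $G/H \to BH \to BG$, and since both $G$ and $H$ have torsion-free cohomology that is exterior on odd generators, the Eilenberg--Moore or Serre spectral sequence degenerates in a controlled way. Concretely, $H^*(BU(\ell),\QQ) = \QQ[c_1,\dots,c_\ell]$ is polynomial on Chern classes $c_j$ in degree $2j$, and the inclusion $H \hookrightarrow G$ is (up to the quotient by $\GG_m$, which is invisible rationally) the map $V \mapsto S^dV$ of representations. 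The induced map $H^*(BG,\QQ) \to H^*(BH,\QQ)$ sends the Chern classes of the tautological bundle on $BG$ to the Chern classes of $S^d$ of the tautological bundle on $BH$. The crucial claim I would verify is that this map is \emph{surjective}: because $c_1(S^dV)$ is a nonzero multiple of $c_1(V)$ (indeed the multiplier is $dN/n$, already computed in Proposition \ref{prop:Funda}), the image contains $c_1(V)$, and an induction on degree then shows all the $c_j(V)$ lie in the image.

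**Granting surjectivity**, the standard consequence for the homogeneous space of compact (or reductive) groups is that $H^*(G/H,\QQ)$ is again an exterior algebra: the fibration $G \to G/H$ is totally non-cohomologous to zero precisely on the first $n$ generators $e_1,\dots,e_{2n-1}$ of $G$, which transgress to the $n$ algebraically independent classes $c_1,\dots,c_n$ pulled back from $BH$ (surjectivity guarantees these are independent, since $H^{\mathrm{even}}(BH,\QQ) = \QQ[c_1,\dots,c_n]$ is a polynomial algebra on $n$ generators and we are hitting a regular sequence). The remaining $N-n$ generators $e_{2n+1}, e_{2n+3}, \dots, e_{2N-1}$ of $G$ in degrees $2j-1$ for $j=n+1,\dots,N$ survive to $H^*(G/H,\QQ)$ and generate it freely as an exterior algebra. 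Setting $\xi_{2j-1}$ to be the class corresponding to $e_{2j-1}$ yields exactly
\[
H^*(B_0,\QQ) = \textstyle\bigwedge^*_{\QQ}\langle \xi_{2n+1},\dots,\xi_{2N-1}\rangle,
\]
and the Poincaré polynomial follows immediately as the product $\prod_{j=n+1}^N (1+t^{2j-1})$ since each exterior generator $\xi_{2j-1}$ contributes a factor $(1+t^{2j-1})$.

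**The main obstacle** I anticipate is the surjectivity of $H^*(BG,\QQ) \to H^*(BH,\QQ)$, i.e.\ showing the symmetric-power map $c_*(V) \mapsto c_*(S^dV)$ hits all of $\QQ[c_1,\dots,c_n]$. The cleanest route is the splitting principle: formally write the Chern roots of $V$ as $x_1,\dots,x_n$, so the roots of $S^dV$ are the degree-$d$ monomials $\sum a_i = d$ with value $\sum a_i x_i$. The Chern classes of $S^dV$ are then the elementary symmetric functions of these, and one must argue that the subalgebra they generate is all of $\QQ[x_1,\dots,x_n]^{S_n} = \QQ[c_1,\dots,c_n]$. This is a concrete statement about symmetric functions; the degree-$2$ part already gives $c_1(V)$ up to the nonzero scalar $dN/n$, and a dimension/regularity argument (the $n$ classes $c_1(S^dV),\dots$ restricted suitably form a regular sequence) closes it. Alternatively, one can bypass the explicit symmetric-function computation by invoking that $G/H$ is a homogeneous space of reductive groups of the \emph{same rank difference} and citing the general structure theorem for such quotients, but I would prefer the splitting-principle argument as it makes the generators $\xi_{2j-1}$ transparent and matches the index range $n+1 \le j \le N$ in the statement.
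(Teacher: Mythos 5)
Your proposal is correct and follows essentially the same route as the paper: reduce to the surjectivity of the restriction map $H^*(\mathrm{B}G,\QQ)\to H^*(\mathrm{B}H,\QQ)$, prove it via the splitting principle using that $c_i(S^d\mathcal V)$ equals a nonzero multiple of $c_i(\mathcal V)$ plus terms in lower Chern classes, and then conclude by Leray--Hirsch (the paper) or equivalently the Borel/Eilenberg--Moore description of $H^*(G/H,\QQ)$ (your packaging). The only cosmetic difference is that you work with $\GL$ rather than $\PGL$, so the degree-one generators appear on both sides and cancel, which is harmless rationally since the kernel $\mu_d$ of $\GL(V)\to\GL(S^dV)$ is finite.
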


\begin{proof}
To compute the terms in question, one uses that $\PGL(n)$ is homotopic to its maximal connected compact subgroup $\mathrm{PU}(n)=\mathrm{PSU}(n)$, whose cohomology can be computed from that of its finite cover $\mathrm{SU}(n)$ \cite[Cor.\ 4D.3]{hatcher}, giving
\begin{equation}\label{eqn:cohPGL}
\xymatrix{ H^*(\PGL(n),\QQ) \cong \bigwedge^\ast\nolimits_{\QQ}\langle\xi_3, \xi_5, \dots, \xi_{2n-1}\rangle.}
\end{equation}

From the Leray--Hirsch Theorem, we deduce that
\begin{equation}\label{eqn:lerayhirsch}
 H^*(\PGL(N),\QQ)\cong H^*(B_0,\QQ)\otimes H^*(\PGL(n),\QQ)
 \end{equation}
as long as we can prove that the pull-back map
\begin{equation}\label{eqn:pullbackcoh}
\nu_{d}^*\colon H^*(\PGL(N),\QQ) \to H^*(\PGL(n),\QQ)
\end{equation} 
is surjective and sends $\xi_i\mapsto\xi_i$ after multiplication by a non-zero scalar.\smallskip 

To do this we argue with the induced map on topological classifying spaces with their universal principal $G$-bundle $\mathrm{E}G\to \mathrm{B}G$, and the associated vector bundles $\mathcal{V}$ and $ \mathcal{V}_d$ over them
\begin{equation*}
    \xymatrix{
        \mathcal{V}=\mathrm{E}\PGL(V)\times_{\PGL(V)}V\ar[d] & \mathcal{V}_d=\mathrm{E}\PGL(S^dV)\times_{\PGL(S^dV)} S^dV\ar[d] \\
        \mathrm{B}\PGL(V)\ar[r]^{\nu_d} & \mathrm{B}\PGL(S^dV).
    }
\end{equation*}
A theorem of Borel, cf.\
 \cite[Thm.\,1.81\,\&\,1.86]{Felix}, applied to the maximal compact subgroup $\mathrm{PSU}(V)$,  gives that
\[ H^*(\mathrm{B}\PGL(V),\QQ) \cong \QQ[{\rm c}_2,{\rm  c}_3, \ldots, {\rm c}_n]. \]
Here, ${\rm c}_i={\rm c}_i(\mathcal{V})\in H^{2i}(\mathrm{B}\PGL(V),\QQ)$ is the $i$-th Chern class of the tautological vector bundle $\mathcal{V}$, cf.\ \cite[14.3]{MS}. Since $\nu_d$ is induced by the $d$-th symmetric power representation
, we obtain
\[ \nu_d^*\mathcal{V}_d = S^d\mathcal{V} \] 
where $S^d\mathcal{V}$ is the $d$-th symmetric power of the vector bundle $\mathcal{V}$. The $i$-th Chern classes of the symmetric product can be computed using the splitting principle and it is not hard to see 
that
\[ {\rm c}_i(S^d\mathcal{V}) = e_{d,i}\cdot {\rm c}_i(\mathcal{V}) + \text{lower order terms}\]
where $e_{d,i}$ is a non-zero constant and the remaining terms are an expression in the smaller degree Chern classes. Hence, by induction in $i$, the map $\nu_d^*\colon H^*(\mathrm{B}\PGL(V),\QQ)\to H^*(\mathrm{B}\PGL(S^dV),\QQ)$ is surjective. 
Consider now the Serre spectral sequence for the fibration
\[ \PGL(V) \to \mathrm{E}\PGL(V) \to \mathrm{B}\PGL(V). \]
As in the proof of \cite[Thm.\  1.81]{Felix}, we see that going from the cohomology of $\PGL(V)$ to that of its classifying space sends a generator $\xi_{2i-1}$ to the Chern classe ${\rm c}_i$ with a shift in degree by one. In particular, surjectivity of the pull-back map on cohomology of classifying spaces implies 
the surjectivity of (\ref{eqn:pullbackcoh}). To conclude, note that (\ref{eqn:lerayhirsch}) in combination with (\ref{eqn:cohPGL}) implies the proposition.
\end{proof}

In addition, we remark that the generators $\xi_{2j-1}\in H^{2j-1}(\kt_0,\QQ)$ are of type $(j,j)$ and
thus of weight $2j$ with respect to the mixed Hodge structure on the smooth, quasi-projective variety $\kt_0$. In other words, the Hodge--Deligne polynomial of $\kt_0$ is
$$\mu(t,u,v)=\prod_{j=n+1}^N(1+t^{2j-1}u^jv^j).$$
This is a consequence of the corresponding statement for the mixed Hodge structure
of $\PGL$ cf.\ \cite[Thm.\ 9.1.5]{Deligne3}.
\smallskip

The passage from the cohomological description of $\kt_0$ to the one of the  higher $\kt_i$, $i>0$ is not difficult and provided by the following result which concludes the proof of Theorem \ref{thm2}.

\begin{cor} The cohomology of the higher universal Brauer--Severi varieties $\kt_i$, $i>0$, is given
by $$H^*(\kt_i,\QQ)\cong H^\ast(\kt_0,\QQ)\otimes H^\ast({\rm Gr}(N,S^d_iV),\QQ).$$
\end{cor}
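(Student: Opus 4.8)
The plan is to read off $H^*(\kt_i,\QQ)$ from the fibration $p_i\colon\kt_i\to{\rm Gr}_i={\rm Gr}(N,S^d_iV)$ of \S\ref{sec:Fibring}, which is Zariski locally trivial with fibre $\kt_0\cong\PGL(S^dV)/\PGL(V)$. Since the base ${\rm Gr}_i$ has cohomology concentrated in even degrees while the fibre has cohomology the exterior algebra $\bigwedge^\ast\langle\xi_{2n+1},\dots,\xi_{2N-1}\rangle$ on odd generators (by the preceding proposition), the Leray--Hirsch theorem reduces the claim to lifting the generators $\xi_{2j-1}$ to classes on $\kt_i$, i.e.\ to proving that the restriction $H^*(\kt_i,\QQ)\to H^*(\kt_0,\QQ)$ to a fibre is surjective. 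Granting this, Leray--Hirsch produces an isomorphism of $H^*({\rm Gr}_i,\QQ)$-modules $H^*(\kt_i,\QQ)\cong H^*({\rm Gr}_i,\QQ)\otimes H^*(\kt_0,\QQ)$, which is the assertion.

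To decide the surjectivity I would pass to classifying spaces. The $\kt_0$-bundle $\kt_i\to{\rm Gr}_i$ is associated, through the action of $\PGL(S^dV)$ on $\kt_0$, to the projectivised tautological subbundle $\PP(\mathcal{A})$ on ${\rm Gr}_i$; hence it is the pull-back of the universal fibration $\kt_0\to\mathrm{B}\PGL(V)\to\mathrm{B}\PGL(S^dV)$, the second map being $\nu_d$, along the classifying map $f\colon{\rm Gr}_i\to\mathrm{B}\PGL(S^dV)$ of $\PP(\mathcal{A})$. In the universal fibration the odd generator $\xi_{2j-1}$ transgresses to a class $\kappa_j\in H^{2j}(\mathrm{B}\PGL(S^dV),\QQ)$, and $\kappa_{n+1},\dots,\kappa_N$ form a generating set of the kernel of the surjection $\nu_d^\ast$ computed in the previous proof. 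By naturality of the Serre spectral sequence, the transgression of $\xi_{2j-1}$ in our fibration is $f^\ast\kappa_j\in H^{2j}({\rm Gr}_i,\QQ)$, and by multiplicativity the spectral sequence degenerates at $E_2$ if and only if all $f^\ast\kappa_j$ vanish.

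The vanishing of the $f^\ast\kappa_j$ is the main obstacle. As a first, partial step one sees that the generators do extend over the big Schubert cell: by Remark \ref{rem:contr} the restriction of $p_i$ over the standard chart $U\subset{\rm Gr}_i$ retracts onto $\kt_0$ through the projection (\ref{eqn:proj}), so each $\xi_{2j-1}$ lifts to $p_i^{-1}(U)$. Globalising these lifts is exactly the statement $f^\ast\kappa_j=0$, and this is the delicate point: a priori $f^\ast\kappa_j$ is a reduced Chern class of the tautological bundle $\mathcal{A}$, which does not vanish on a Grassmannian for purely formal reasons, so the required degeneration must be extracted from the special geometry of the construction -- the fibre being a space of Veronese subvarieties rather than an abstract reduction of structure group -- or, failing a conceptual argument, verified by a direct comparison of Poincar\'e polynomials. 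Pinning down this vanishing is where I expect the real work to lie.
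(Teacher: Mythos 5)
You have set the problem up the same way the paper does (Leray--Hirsch, reduced to lifting the generators $\xi_{2j-1}$ along the restriction to a fibre of $p_i\colon\kt_i\to{\rm Gr}_i$), and you have correctly isolated the crux as the vanishing of the transgression $d_{2j}(\xi_{2j-1})=f^\ast\kappa_j\in H^{2j}({\rm Gr}_i,\QQ)$. The gap is the one you name yourself: this vanishing is never established, so the proposal is not a proof. The paper attempts to close exactly this gap by a weight argument: $\xi_{2j-1}$ has weight $2j$, while $E_2^{r,2j-r}\cong H^r({\rm Gr}_i,\QQ)\otimes H^{2j-r}(\kt_0,\QQ)$ is asserted to have weights $\geq 2j+1$, forcing all $d_r(\xi_{2j-1})=0$. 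That assertion is correct for $r<2j$, but it breaks down precisely at the transgression $r=2j$: there $H^{2j-r}(\kt_0,\QQ)=H^0(\kt_0,\QQ)$ has weight $0$, so $E_2^{2j,0}\cong H^{2j}({\rm Gr}_i,\QQ)$ has weight exactly $2j$ and the differential is not obstructed. So the one differential your approach cannot dispose of is also the one the paper's argument does not cover.

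Worse, your suspicion that $f^\ast\kappa_j$ is a genuinely non-vanishing reduced Chern class of the tautological bundle appears to be correct, so the missing step cannot be supplied. Take $n=d=2$, $i=1$, so $N=3$, ${\rm Gr}_1\cong\PP^3$ and $\kt_1$ is the space of smooth conics in $\PP^3$. Up to homotopy $\kt_1\cong U(4)/(S^2U(2)\times U(1))$, and the Cartan model $\bigl(\QQ[a_1,a_2,b]\otimes\Lambda(p_1,p_3,p_5,p_7),\ dp_{2k-1}=c_k(S^2\mathcal{V}_2\oplus\mathcal{L})\bigr)$ gives $dp_1=3a_1+b$, $dp_3=2a_1^2+4a_2+3a_1b$, $dp_5=4a_1a_2+(2a_1^2+4a_2)b$, a regular sequence with quotient $\QQ[a_1]/(a_1^3)$, while $dp_7=4a_1a_2b$ lies in the ideal they generate. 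Hence $H^\ast(\kt_1,\QQ)\cong\QQ[a_1]/(a_1^3)\otimes\Lambda(\tilde p_7)$ with Poincar\'e polynomial $(1+t^2+t^4)(1+t^7)$, whereas the Corollary predicts $(1+t^5)(1+t^2+t^4+t^6)$; in particular $H^5(\kt_1,\QQ)=H^6(\kt_1,\QQ)=0$ rather than $\QQ$, i.e.\ $d_6(\xi_5)$ is a nonzero multiple of $h^3$ and the spectral sequence does not degenerate. (The same model recovers $H^\ast(\kt_0,\QQ)\cong\Lambda(\xi_5)$, and both candidate answers give the identical count of smooth conics in $\PP^3(\FF_q)$, so point counts do not detect the discrepancy; this is also consistent with the fact that rationally ${\rm B}(\GL(n)/\mu_d)$ has the cohomology of ${\rm B}\GL(n)$, with no odd classes, contrary to Proposition \ref{prop:cohBinfty}.) So your fallback of ``a direct comparison of Poincar\'e polynomials'' is the right instinct -- carried out, it shows the statement needs to be corrected rather than proved.
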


\begin{proof}
The assertion is again a consequence of the Leray--Hirsch theorem, for which we need to show that 
the restriction maps $H^q(B_i,\QQ)\to H^q(B_0,\QQ)$ are surjective. In fact, since the restriction map
is multiplicative, it suffices to lift the generators $\xi_{2j-1}$, $j=n+1,\ldots,N$.\smallskip

For the latter we use the
Leray spectral sequence $E_2^{p,q}=H^p({\rm Gr}_i,R^q p_\ast\QQ)\Rightarrow H^{p+q}(B_i,\QQ)$
for the projection $p_i\colon B_i\to{\rm Gr}_i={\rm Gr}(N,S^d_iV)$.  Since ${\rm Gr}_i$ is simply connected, all the local systems are actually constant and, therefore, $E_2^{p,q}\cong H^p({\rm Gr}_i,\QQ)\otimes H^q(B_0,\QQ)$. In particular,
the restriction morphism is the natural map $E^q=H^q(B_i,\QQ)\twoheadrightarrow E_\infty^{0,q}\subset E_2^{0,q}\cong H^q(B_0,\QQ)$. 
\smallskip

Suppose now that one of the generators $\xi_{2j-1}\in H^{2j-1}(B_0,\QQ)=E_2^{0,2j-1}$
cannot be lifted to a class on $\kt_i$. Then there exists an $r$ such that  $0\ne d_r^{0,2j-1}(\xi_{2j-1})\in E_r^{r,2j-r}$. 
However, on the $E_2$-page, $E_2^{r,2j-r}\cong H^r({\rm Gr}_i,\QQ)\otimes H^{2j-r}(\kt_0,\QQ)$ and according to the Hodge--Deligne polynomial the mixed
Hodge structure on the 
$H^{2j-r}(\kt_0,\QQ)$ is concentrated in weight
$\geq 2j-r+1$. Therefore, $H^r({\rm Gr}_i,\QQ)\otimes H^{2j-r}(\kt_0,\QQ)$ is concentrated in weight $\geq 2j+1$. Since $\xi_{2j-1}$ is of weight $2j$ and the Leray spectral sequence lives in the
category of mixed Hodge structures, this gives a contradiction.
\end{proof}

\begin{remark}\label{rem:obs} Assume $X\subset\bar X$ is a Zariski  open subset of a smooth projective
variety $\bar X$ and $P\to X$ is a Brauer--Severi variety. Then for $i\gg0$ the classifying morphism
$\psi\colon X\to \kt_i$, see Proposition \ref{prop:GlobalUniv}, leads to classes $\psi^\ast\xi_{2j-1}\in H^{2j-1}(X,\QQ)$. If $P\to X$ extends
to a Brauer--Severi variety $\bar P\to \bar X$, then we may assume that $\psi$ is the restriction of 
a classifying morphism $\bar\psi\colon \bar X\to\kt_i$ associated with $\bar P$. As the Hodge structure of $\bar X$ is pure,
the classes $\bar\psi^\ast\xi_{2j-1}\in H^{2j-1}(\bar X,\QQ)$ are all trivial and so are their restrictions
$\psi^\ast\xi_{2j-1}\in H^{2j-1}(X,\QQ)$. Equivalently, if any of the classes $\psi^\ast\xi_{2j-1}$
is not zero, then $P\to X$  cannot be extend to a Brauer--Severi variety $\bar P\to \bar X$.\smallskip

 Clearly, if the class $\alpha\in \Br(X)$ of $P\to X$ is ramified, then an extension $\bar P\to \bar X$ cannot exist. However, it could happen
that $\bar P$ does not exist, e.g.\ because of $\psi^\ast\xi_{2j-1}\ne0$, but still its Brauer class $\alpha$ is unramified 
(e.g.\ because it can be represented by a different Brauer--Severi variety $P'\to X$ that does extend) and so is contained in $\Br(\bar X)\subset \Br(X)$. In other words, the classes $\psi^\ast\xi_{2j-1}$ are invariants of the Brauer--Severi variety
and not merely of its Brauer class. \smallskip

Note that the choices involved in the definition of the classifying
morphism $\psi$, cf.\ Remark \ref{rem:choices}, are all continuous and so the cohomology
classes $\psi^\ast\xi_{2j-1}\in H^\ast(X,\QQ)$ are in fact independent of these choices.
\end{remark}

Eventually, one can pass to the stable cohomology or, equivalently, the cohomology of
$\kt_\infty(d,n)\coloneqq\bigcup_{i\geq0} \kt_i(d,n)$. As it sits over ${\rm Gr}(N,\infty)=\bigcup_{i\geq0} {\rm Gr}(N,S^d_iV)$
and the cohomology of the latter is well known, see e.g.\ \cite[Thm.\ 4.D.4]{hatcher}, one obtains the following.

\begin{prop}\label{prop:cohBinfty}
For fixed $n$ and $d$ the rational cohomology ring of $\kt_\infty(d,n)$ is
$$\xymatrix{H^\ast(\kt_\infty(d,n),\QQ)\cong \bigwedge^\ast\nolimits_{\QQ}\langle\xi_{2n+1}, \xi_{2n+3}, \dots, \xi_{2N-1}\rangle}\otimes \QQ[{\rm c}_1,{\rm c}_2,\ldots,{\rm c}_{N}],$$
where as before $N=\dim S^dV$ with $\dim(V)=n$.\qed
\end{prop}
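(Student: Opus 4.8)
The plan is to derive the statement from the Corollary above by passing to the limit over $i$. Recall that $\kt_\infty(d,n)=\varinjlim_i\kt_i(d,n)$ is the increasing union along the closed immersions $\kt_i\subset\kt_{i+1}$, and that these are compatible with the embeddings ${\rm Gr}(N,S^d_iV)\,\hookrightarrow {\rm Gr}(N,S^d_{i+1}V)$ whose union is ${\rm Gr}(N,\infty)$. First I would note that the fibrations $p_i\colon\kt_i\to{\rm Gr}(N,S^d_iV)$ with fibre $\kt_0$ assemble to a fibration $p_\infty\colon\kt_\infty\to{\rm Gr}(N,\infty)$, again with fibre $\kt_0$.

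The cleanest route is then to apply the Leray--Hirsch theorem directly to $p_\infty$. Its hypotheses hold because the exterior generators $\xi_{2j-1}\in H^{2j-1}(\kt_0,\QQ)$, $j=n+1,\dots,N$, lift to classes on every $\kt_i$ --- this is precisely the surjectivity of $H^\ast(\kt_i,\QQ)\to H^\ast(\kt_0,\QQ)$ established in the proof of the Corollary --- and, chosen compatibly, these lifts glue to classes on $\kt_\infty$ whose monomials restrict to a basis of $H^\ast(\kt_0,\QQ)$. Leray--Hirsch then exhibits $H^\ast(\kt_\infty,\QQ)$ as a free module over $H^\ast({\rm Gr}(N,\infty),\QQ)$ with this basis; since the $\xi_{2j-1}$ are of odd degree, they square to zero and anticommute, so the module structure refines to the ring isomorphism
$$H^\ast(\kt_\infty,\QQ)\cong \bigwedge^\ast\nolimits_{\QQ}\langle\xi_{2n+1},\dots,\xi_{2N-1}\rangle\otimes H^\ast({\rm Gr}(N,\infty),\QQ).$$

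It remains to identify the base: the infinite Grassmannian ${\rm Gr}(N,\infty)$ is a model for ${\rm B}\GL(N,\CC)\simeq {\rm B}U(N)$, so its rational cohomology is the polynomial ring $\QQ[{\rm c}_1,\dots,{\rm c}_N]$ on the universal Chern classes, cf.\ \cite[Thm.\ 4.D.4]{hatcher}; substituting gives the claim. Equivalently, one may take the inverse limit of the finite-level formula: in each fixed degree the groups $H^k(\kt_i,\QQ)$ stabilise as $i\to\infty$, so the ${\varprojlim}^1$ term in the Milnor sequence vanishes and $H^k(\kt_\infty,\QQ)=\varprojlim_i H^k(\kt_i,\QQ)$, with the exterior factor constant and the Grassmannian factor converging to $\QQ[{\rm c}_1,\dots,{\rm c}_N]$. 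The only point needing care --- and the reason I prefer to argue with $p_\infty$ from the start --- is the compatibility of the Leray--Hirsch (equivalently K\"unneth) splitting with the restriction maps $\kt_i\subset\kt_{i+1}$, which is automatic once the lifts of the $\xi_{2j-1}$ are fixed on $\kt_\infty$ and restricted.
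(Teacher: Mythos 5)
Your argument is correct and follows essentially the same route as the paper, which likewise obtains the result by passing to the stable limit of the finite-level Corollary via the fibration over ${\rm Gr}(N,\infty)$ and the standard computation $H^\ast({\rm Gr}(N,\infty),\QQ)\cong\QQ[{\rm c}_1,\ldots,{\rm c}_N]$. The paper states this passage without elaboration; your additional care about lifting the classes $\xi_{2j-1}$ compatibly and the vanishing of the $\varprojlim^1$ term simply fills in details the authors leave implicit.
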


\begin{remark}\label{rem:class}
Recall that ${\rm Gr}(N,\infty)$ is homotopy equivalent to the classifying space ${\rm BGL}(N)$, which allows one
to compute the cohomology of the classifying space of the group $\GL(N)$ as $H^\ast({\rm BGL}(N),\QQ)\cong\QQ[{\rm c}_1,{\rm c}_2,\ldots,{\rm c}_N]$.\smallskip

 Similarly, $\kt_\infty$ is the classifying space
of the group $\GL(n)\,/\,\mu_d\cong\Aut(\PP^{n-1},\ko(d))$, see Remark \ref{rem:AutO(d)},
and so Proposition \ref{prop:cohBinfty} computes the cohomology of the classifying space
${\rm B}(\GL(n)\,/\,\mu_d)$
and the projection $\kt_\infty(d,n)\to{\rm Gr}(N,\infty)$ can be viewed as the map ${\rm B}(\GL(n)\,/\,\mu_d)\to
{\rm B}\GL(N)$ induced by the inclusion $ \GL(n)\,/\,\mu_d\,\hookrightarrow \GL(N)$.
\end{remark}


\section{Discriminant avoidance (after de Jong and Starr)}\label{sec:dJS}
This section is devoted to the `discriminant avoidance' result of de Jong and Starr \cite{dJS}. It states that the period-index conjecture for all unramified
Brauer classes $\alpha\in \Br(X)$ on arbitrary projective varieties $X$ of fixed dimension implies that it
holds as well for all ramified classes $\alpha\in\Br(K(X))$.
More precisely, it is the following statement. (The smoothness assumption can be modified. For example, one can replace smooth by locally factorial.)

\begin{thm}[de Jong--Starr]\label{thm} Let $k$ be an algebraically closed field and let $m$ be  a fixed positive integer. Assume that there exists an integer $e_m$ such that 
\begin{equation}\label{eqn:conj:pi2}
\ind(\alpha)\mid\per(\alpha)^{e_m}
\end{equation} holds for all Brauer classes
$\alpha\in\Br(X)$ on all smooth projective varieties $X$ of dimension $\leq m$ over $k$.
Then {\rm (\ref{eqn:conj:pi2})} also holds for all $\alpha\in\Br(Y)$
on arbitrary  smooth quasi(!)-projective varieties $Y$ of dimension $\leq m$.
\end{thm}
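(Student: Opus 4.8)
The plan is to show that the index of an arbitrary ramified class is already realised, with the same period, on a \emph{smooth projective} variety of the same dimension, so that the projective hypothesis applies directly. Fix $\alpha\in\Br(Y)$ with $Y$ smooth quasi-projective of dimension $\leq m$, and set $d\coloneqq\per(\alpha)$ and $n\coloneqq\ind(\alpha)$. Since period and index always share the same prime factors and $d\mid n$, the pair $(d,n)$ satisfies the standing hypotheses of \S\ref{sec:periodindexUniv}. Over $K(Y)$ the class $\alpha_\eta$ is represented by a division algebra of degree $n$, so after shrinking $Y$ we may assume that $\alpha$ is represented by a Brauer--Severi variety $P\to Y$ of relative dimension $n-1$ with period $d$ (Lemma \ref{lem:Or}); this changes neither $d$ nor $n$ nor the bound $\dim Y\leq m$. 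Choosing $i\geq m$, Proposition \ref{prop:GlobalUniv} gives a classifying morphism $\psi\colon Y\to\kt_i$ with $P\cong\psi^\ast\kp_i$, and Proposition \ref{prop:piUniv} says $\ind(\pmb{\alpha}_i)=n$. Writing $z\coloneqq\psi(\eta_Y)\in\kt_i$ for the image of the generic point, the divisibilities $\ind(\alpha_\eta)\mid\ind(\pmb{\alpha}_i|_z)$ (index drops under the field extension $K(Y)/\kappa(z)$) and $\ind(\pmb{\alpha}_i|_z)\mid n$ (the fibre of $\kp_i$ is Brauer--Severi of dimension $n-1$) force $\ind(\pmb{\alpha}_i|_z)=n$. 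Its closure $Z\coloneqq\overline{\{z\}}\subseteq\bar\kt_i$ is integral of dimension $\operatorname{trdeg}(\kappa(z)/k)\leq\dim Y\leq m$.

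The heart of the matter, the actual ``discriminant avoidance'', is to replace $Z$ by a genuinely projective variety lying inside the open part $\kt_i$ without lowering the index at the generic point. I would work in the linear model of Proposition \ref{prop:Dim}, where $\kt_i\cong U_N/\GL(V)$ with $U_N\subseteq U=\Hom(S^dV,S^d_iV)$ the locus of injective maps and $\bar\kt_i$ the associated GIT quotient. The non-injective locus $U\setminus U_N$ has codimension $i+1$ in the \emph{linear} space $U$, which exceeds $\dim Z$ as soon as $i\geq m$. I would lift $Z$ to an irreducible subvariety $\widetilde Z\subseteq q^{-1}(Z)\subseteq U_N$ of dimension $\dim Z$ mapping generically finitely onto $Z$ under $q\colon U_N\to\kt_i$, and then translate: a dimension count shows that for a general $u_0\in U$ one has $\widetilde Z+t\,u_0\subseteq U_N$ for all but finitely many $t\in\mathbb{A}^1$. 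Pushing forward along $q$ yields an irreducible family $\{Z_t\}$ of subvarieties of $\kt_i$, of dimension $\leq m$, with $Z_0=q(\widetilde Z)=Z$ and with a general member $Z_{t_0}$ contained in $\kt_i$, i.e.\ disjoint from the boundary $\partial\kt_i$.

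To see that the index survives this motion I would invoke the semicontinuity of the index: in the irreducible total space of $\{Z_t\}$ the generic point $z$ of the special fibre $Z_0=Z$ is a specialisation of the generic point $\zeta$ of the family, whence $n=\ind(\pmb{\alpha}_i|_z)\mid\ind(\pmb{\alpha}_i|_\zeta)$, while $\ind(\pmb{\alpha}_i|_\zeta)\mid n$; so the index equals $n$ at $\zeta$ and therefore at the generic point of a general member $Z_{t_0}$. Resolving singularities (in characteristic zero; in positive characteristic one replaces this by an alteration and controls the resulting prime-to-$p$ ambiguity of the index) produces a smooth projective $\bar X$ of dimension $\leq m$ with a morphism $f\colon\bar X\to\kt_i$ birational onto $Z_{t_0}$. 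Then $\beta\coloneqq f^\ast\pmb{\alpha}_i\in\Br(\bar X)$ is unramified on a smooth projective variety of dimension $\leq m$, with $\per(\beta)\mid d$ and $\ind(\beta)=n$, so the hypothesis yields $n=\ind(\beta)\mid\per(\beta)^{e_m}\mid d^{e_m}$, which is precisely $\ind(\alpha)\mid\per(\alpha)^{e_m}$.

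The main obstacle is exactly this middle step: moving $Z$ off the boundary while keeping the index pinned at $n$. The two ingredients that make it work, and which I expect to require the most care to state cleanly, are first that the high codimension of $\partial\kt_i$, equivalently of the non-injective locus in the linear space $U$, guaranteed by taking $i\geq m$ via Proposition \ref{prop:Dim}, is what lets a general translate avoid it; and second that using $Z$ \emph{as the special fibre} of the deforming family is what lets semicontinuity push the generic index \emph{up} to $n$ rather than down. This also explains where $\dim Y\leq m$ enters: it bounds $\dim Z$, hence $\dim\bar X$, so that the projective input is applied with the \emph{same} exponent $e_m$. A general complete intersection of dimension $m$ would avoid the boundary automatically but would in general carry index strictly smaller than $n$, and so could not detect it; it is the anchoring at the specific point $z$ coming from $\alpha$ that is essential.
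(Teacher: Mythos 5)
Your overall strategy --- push the problem into $\kt_i$ with $i$ large so that $\partial\kt_i\subset\bar\kt_i$ has codimension $>m$, deform the image of $Y$ to a projective variety inside $\kt_i$ without losing the index at the generic point, and then apply the projective hypothesis --- is exactly the paper's (and de Jong--Starr's), and your reductions at the beginning and end (realising $\alpha$ by a Brauer--Severi variety of relative dimension $n-1$, pinning $\ind(\pmb{\alpha}_i|_z)=n$ at $z=\psi(\eta_Y)$, and reading off $\per\mid d$ from the relative $\ko(d)$) are correct. But the mechanism you propose for the crucial middle step does not work. Translating a lift $\widetilde Z\subset U_N$ by $t_0u_0$ inside the linear space $U$ only guarantees that the translate consists of \emph{injective} maps; it says nothing about the behaviour of $\widetilde Z+t_0u_0$ at infinity. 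The resulting $Z_{t_0}=q(\widetilde Z+t_0u_0)$ is the image of an \emph{affine} variety and is merely a constructible subset of $\kt_i$; its closure in $\bar\kt_i$ is governed by the cone at infinity of $\widetilde Z$, which translation does not change, and will in general meet $\partial\kt_i$. So $Z_{t_0}$ is only quasi-projective --- no better than the $Y$ you started with --- and the final step ``resolve to a smooth \emph{projective} $\bar X$'' has nothing to resolve. The paper avoids this by deforming $\psi(X)$ as a complete intersection of divisors in $|\ko(M)|$ on the \emph{projective} $\bar\kt_i$ (Proposition \ref{prop:fam}): the general member is automatically closed in $\bar\kt_i$, smooth by Bertini (so no resolution/alteration is needed, which also disposes of your positive-characteristic worry), and misses the boundary because of the codimension bound of Proposition \ref{prop:Dim}.

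The gap propagates into your index argument. From $\ind(\pmb{\alpha}_i|_\zeta)=n$ at the generic point $\zeta$ of the total space you can only conclude $\ind(\pmb{\alpha}_i|_{\eta_{t_0}})\mid n$ at a closed fibre, since the index can drop under specialisation $\zeta\rightsquigarrow\eta_{t_0}$; but the final application of the hypothesis needs $n\mid\ind$, i.e.\ that the index does \emph{not} drop for very general $t_0$. That statement is precisely Corollary \ref{cor:spec}, proved via specialisation of rational sections of the relative Grassmannian (Proposition \ref{prop:ratsec}), and its proof requires the general fibres of the family to be \emph{proper} (plus a Hilbert-scheme countability argument and a reduction to uncountable $k$) --- properness being exactly what your translated fibres lack. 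So the two halves of the missing argument, projectivity of the general member and preservation of the index under the degeneration, fail together; replacing the affine translation by the linear-system deformation in $\bar\kt_i$ repairs both at once.
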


The result is not stated explicitly in this  form in \cite{dJS}. In fact, it is there only used to treat the case of surfaces. However, the way the theory is set up by de Jong and Starr, it is  clearly designed to cover the general case and the article is rightly credited for the above theorem. The paper \cite{dJS} discusses other aspects that are less relevant to the period-index problem itself. We here focus on the main arguments and modify the geometric setup, shifting attention from Grassmannians to (universal) Brauer--Severi varieties. We
hope that  the main ideas  can be appreciated better from the more geometric perspective of the present article. In a recent paper by Reichstein and Scavia, one finds yet another approach, cf.\ \cite[Prop.\ 15.2]{RS2}.\smallskip

Note that in \cite[\S\! 7]{dJ} de Jong explains how to deduce the period--index conjecture for ramified classes on surfaces from the unramified case. The ideas there to reduce the ramified to the unramified situation at least morally also work in higher dimensions.\footnote{This was pointed out to us by O.\ Benoist.} Some care has to be applied in positive characteristic, for resolution of singularities is freely used in \cite{dJ}. The approach
in \cite{dJS} and here is different. While  \cite{dJ} uses ramified covers specialising to a union of multiple
copies of the variety, the specialisation in \cite{dJS} and here is obtained by a Bertini type argument.

\subsection{Index under specialisation} Following de Jong and Starr \cite[Lem.\ 2.3.4]{dJS}, see also \cite[Ch.\ 4]{Starr}, we first show that rational sections specialise.

\begin{prop}\label{prop:ratsec}
Assume $\kx\to D$ is a flat quasi-projective family with integral fibres over a smooth
curve $D$ with a distinguished closed point $0\in D$ such that $\kx|_{D\,\setminus \,\{0\}}\to D\,\setminus\, \{0\}$ is projective. Furthermore, assume that  $\kg\to\kx$  is a flat and projective (proper is enough) morphism. Assume that for
$0\ne t\in D$ the morphism $\kg_t\to\kx_t$ admits a rational section.
Then, also
$\kg_0\to\kx_0$ admits a rational section.
\end{prop}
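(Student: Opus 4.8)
The plan is to reduce the statement to the valuative criterion of properness applied at the generic point of the special fibre. First I would record that $\kx$ is integral: a standard flatness argument over the reduced curve $D$ shows that no component of $\kx$ and no embedded point can sit inside a special fibre, so every component dominates $D$; since the generic fibre $\kx_\eta$ (over $\eta\in D$) is integral there is a unique such component, and $\kx$ is integral. Consequently the special fibre $\kx_0=\pi^{-1}(0)$ is a prime divisor with generic point $\eta_0$, and a rational section of $\kg_0\to\kx_0$ is exactly a $K(\kx_0)$-point of $\kg_0$ lying over $\eta_0$.

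The geometric heart is a local observation, and this is where the hypothesis of \emph{integral} (hence reduced) fibres is essential. Let $t\in\ko_{D,0}$ be a uniformiser, and by abuse write $t$ also for its pullback to $\kx$. Then $R\coloneqq\ko_{\kx,\eta_0}$ is a one-dimensional Noetherian local domain, as $\eta_0$ has codimension one in the integral $\kx$. Because $\kx_0$ is reduced, its local ring at $\eta_0$ is the field $K(\kx_0)$; that is, $R/(t)=K(\kx_0)$ is a field, so $(t)=\mathfrak m_{\eta_0}$ is the maximal ideal. A one-dimensional Noetherian local domain whose maximal ideal is principal is a discrete valuation ring, so $R$ is a DVR with $\mathrm{Frac}(R)=K(\kx)$ and residue field exactly $K(\kx_0)$.

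Granting a rational section over the generic fibre $\kx_\eta$ — equivalently a $K(\kx)$-point of $\kg$ over the generic point $\xi$ of $\kx$ — the specialisation is then immediate. The morphism $\Spec K(\kx)\to\kg$ together with the canonical $\Spec R\to\kx$ forms a commuting square over the proper morphism $\kg\to\kx$, so the valuative criterion of properness produces a unique lift $\Spec R\to\kg$. Restricting to the closed point gives a morphism $\Spec K(\kx_0)\to\kg_0$ lying over $\eta_0$, which is precisely the desired rational section of $\kg_0\to\kx_0$.

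It remains to upgrade the hypothesis at the points $t\neq0$ to a rational section over the generic fibre $\kx_\eta$, and this spreading-out is the step I expect to cause the only real friction; it is also where the projectivity of $\kx$ over $D\setminus\{0\}$ enters. I would parametrise the graph closures of the given sections by the relative Hilbert scheme $\mathrm{Hilb}(\kg/D)$, which is projective over $D\setminus\{0\}$, and restrict to the locally closed locus of integral subschemes mapping birationally onto the full fibre $\kx_t$. By Chevalley's theorem its image in $D$ is constructible and dense, hence contains $\eta$, so some component dominates $D$ and its generic fibre yields such a graph over a finite extension of $K(D)$. Passing to a finite cover $D'\to D$ splitting this extension changes nothing at the special point, since $0$ being a $k$-point with $k=\bar k$ gives $\kx_{0'}\cong\kx_0$. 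Throughout, the point of working with the DVR $R$ — whose residue field is exactly $K(\kx_0)$ by reducedness of the special fibre — is that it forces the specialised morphism to be an honest section rather than a multisection.
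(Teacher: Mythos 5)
Your specialisation step is correct and is genuinely different from the paper's: you extend a $K(\kx)$-point of $\kg$ across the discrete valuation ring $\ko_{\kx,\eta_0}$ (a DVR with residue field $K(\kx_0)$ precisely because the special fibre is reduced) via the valuative criterion, whereas the paper takes the flat limit of the family of graph closures inside the relative Hilbert scheme and checks that the limit still has intersection number one with the fibre class of $\kg_0\to\kx_0$. Your version is cleaner and makes transparent where integrality of the fibres is used; the paper's version has the advantage of not needing to first produce a section over the generic fibre $\kx_\eta$ of $\kx\to D$, since the flat limit is taken directly from the closed fibres.

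The gap is in your spreading-out step. You apply Chevalley's theorem to ``the'' image in $D$ of the locus of integral subschemes of the fibres of $\kg$ mapping birationally onto $\kx_t$, and conclude this image is constructible and dense. But ${\rm Hilb}(\kg/D)$ is an infinite disjoint union of finite-type (projective over $D\,\setminus\,\{0\}$) pieces, one for each Hilbert polynomial, so the relevant image is only a \emph{countable union} of constructible subsets of $D$. Over a countable algebraically closed field (e.g.\ $\bar\QQ$), the curve $D\,\setminus\,\{0\}$ has only countably many closed points, and a priori each one could be accounted for by a different component of the Hilbert scheme, with every individual image finite; then no component dominates $D$ and no section over $\eta$ is produced. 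To repair this you must first observe, as the paper does, that it suffices to prove the statement after base change to an uncountable algebraically closed field: then uncountably many closed points force at least one component of the Hilbert scheme to have infinite, hence dense constructible, image in $D$, and your argument goes through from there. (A secondary point: you should also justify that ``integral and birational onto the full fibre'' cuts out a constructible locus in each finite-type piece; the paper sidesteps this by only using the numerical condition that the cycle has degree one over $\kx_t$, which is constant in flat families.)
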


\begin{proof}
To simplify the exposition, we shall assume that the ground field $k$ is not only algebraically closed but also uncountable. It is easy to reduce to this case: If for some extension $K/k$ the base change 
$\kg_{0}\times_kK\to \kx_{0}\times_kK$ admits a rational section then so does $\kg_0\to\kx_0$,
as $k$ is algebraically closed.\smallskip

Next, we compactify both families to projective morphisms $\,\bar{\!\kx}\to D$ and $\,\bar{\!\kg}\to D$.
This typically will have the effect that $\kx_0$ is an open but non-dense subset of $\,\bar{\!\kx}_0$.
 In other words, $\,\bar{\!\kx}_0$ may have more than one irreducible component. The flatness of $\,\bar{\!\kx}\to D$ and $\,\bar{\!\kg}\to D$ still holds.\smallskip

For each $0\ne t\in D$ we pick a rational section
of $\kg_t\to\kx_t$. Its closure $\kz_t\subset\kg_t$ defines a point in the fibre of the
relative Hilbert scheme ${\rm Hilb}(\,\bar{\!\kg}/D)\to D$ over $t\in D$. As the latter has only countably many components, we may shrink $D$ and assume that all $\kz_t$ are contained in the same component
$H\twoheadrightarrow D$ of the Hilbert scheme.\smallskip

Then, after passing to a multi-section of the projection $H\to D$, we may assume that
there exists a relative
family $\kz'\to D$ of subschemes in the fibres $\,\bar{\!\kg}_t$. For $t\ne 0$, the fibre $\kz'_t\subset\kg_t=\,\bar{\!\kg}_t$ is contained
in the same component of the Hilbert scheme as $\kz_t\subset\kg_t$ and hence of relative
degree one over $\kx_t$. The same is then also true for $\kz'_0\to\,\bar{\!\kx}_0$ over the
irreducible component of $\,\bar{\!\kx}_0$ obtained as the closure of the integral open subset 
$\kx_0\subset\,\bar{\!\kx}_0$. Indeed, $\kz'_0\subset \,\bar{\!\kg}_0$ defines a rational section of $\,\bar{\!\kg}_0\to\,\bar{\!\kx}_0$ over $\kx_0$ if and only if its numerical intersection with the class of the fibre  is one and this condition is constant in flat families.
\end{proof}

The following consequence can be viewed as a geometric rephrasing of \cite[Lem.\ 6.2]{dJ}.

\begin{cor}\label{cor:spec}
Assume $P\to\kx$ is a Brauer--Severi variety on a smooth family
$\kx\to D$ with integral fibres over a smooth curve $D$ and assume $0\in D$ is a fixed closed point. If $\alpha_t=[P_{\kx_t}]\in\Br(\kx_t)$ denotes the Brauer class of the restriction $P_{\kx_t}\to \kx_t$, then $$\ind(\alpha_0)\mid\ind(\alpha_t)$$
for the very general $t\in D$.
\end{cor}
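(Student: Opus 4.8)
The plan is to reduce the statement to the existence of rational sections of relative Grassmannians and then to feed this into the specialisation result of Proposition~\ref{prop:ratsec}. Recall from the remark following the index lemma that for a Brauer--Severi variety $P'\to X'$ over a locally factorial base $X'$ with class $\beta$ one has $\ind(\beta)\mid n'$ if and only if the relative Grassmannian ${\rm Gr}(n'-1,P')\to X'$ admits a rational section. As every fibre $\kx_t$ is smooth, and so is the geometric generic fibre $\kx_{\bar\eta}\coloneqq\kx\times_D\overline{K(D)}$, this applies to each $\alpha_t$ and to the class $\alpha_{\bar\eta}$ of $P_{\kx_{\bar\eta}}$. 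I would set $n'\coloneqq\ind(\alpha_{\bar\eta})$, the index at the \emph{geometric} generic point, and prove the two assertions $\ind(\alpha_t)=n'$ for very general $t$ and $\ind(\alpha_0)\mid n'$; together these give $\ind(\alpha_0)\mid n'=\ind(\alpha_t)$ for very general $t$, as claimed.

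For the first assertion I would argue in two directions. The divisibility $\ind(\alpha_t)\mid n'$ for very general $t$ follows by spreading out a rational section of ${\rm Gr}(n'-1,P)$ over $\kx_{\bar\eta}$: such a section is already defined over a finite subextension $K(D')/K(D)$, i.e.\ over the generic fibre of a generically finite cover $D'\to D$, and extends over a dense open of $D'$, so restricting to fibres produces sections over $\kx_t$ for very general $t$. For the reverse divisibility $n'\mid\ind(\alpha_t)$ I would show that for each proper divisor $m\mid n'$ the locus $\{\,t:\ind(\alpha_t)\mid m\,\}$ is negligible. Here the (harmless) assumption that $k$ is uncountable enters exactly as in the proof of Proposition~\ref{prop:ratsec}: the relative Hilbert scheme parametrising degree-one multisections of ${\rm Gr}(m-1,P)\to\kx$ over $D$ has only countably many components, each with constructible image in the curve $D$; a component whose image were dense would, on passing to the generic fibre, yield a degree-one multisection over $\overline{K(D)}$ and hence force $n'=\ind(\alpha_{\bar\eta})\mid m$, which is absurd. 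Thus each image is finite and the bad locus is countable, so for very general $t$ one has $\ind(\alpha_t)\nmid m$ for every proper divisor $m$ of $n'$; combined with $\ind(\alpha_t)\mid n'$ this gives $\ind(\alpha_t)=n'$.

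It remains to specialise the geometric generic section to the chosen point $0$. I would base change along the cover $D'\to D$ found above: the family $\kx\times_D D'\to D'$ is again flat and quasi-projective with integral fibres, projective away from the preimage of $0$, and ${\rm Gr}(n'-1,P)$ pulls back to a flat projective relative Grassmannian $\kg'\to\kx\times_D D'$. Choosing a smooth model of $D'$ and a point $0'$ over $0$ identifies the fibre over $0'$ with $\kx_0$, and after shrinking $D'$ around $0'$ the morphism $\kg'_{t'}\to(\kx\times_D D')_{t'}$ admits a rational section for every $t'\ne0'$. Proposition~\ref{prop:ratsec} then produces a rational section of $\kg'_{0'}\to\kx_0$, that is $\ind(\alpha_0)\mid n'$, completing the proof.

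The main obstacle is the single point hidden in the choice of $n'$: one must work with the \emph{geometric} generic fibre rather than the scheme-theoretic one, since it is the geometric index that is attained by the very general closed fibre, and the uncountability of $k$ together with the countability of Hilbert-scheme components is precisely what makes the very general index equal to this geometric index. This countability mechanism is the same one underlying Proposition~\ref{prop:ratsec}, so once the bookkeeping of geometric versus scheme-theoretic generic points is carried out correctly, both halves of the argument rest on it.
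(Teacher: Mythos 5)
Your argument is correct and rests on the same two pillars as the paper's proof: the reformulation of $\ind(\alpha)\mid r$ as the existence of a rational section of the relative Grassmannian ${\rm Gr}(r-1,P)$, and the specialisation of rational sections provided by Proposition \ref{prop:ratsec}. The difference is one of scope. The paper's proof explicitly restricts to the situation where all $\ind(\alpha_t)$, $t\ne 0$, divide a single fixed integer $r$ --- which is all that is needed for Theorem \ref{thm}, where $r=\per(\alpha_0)^{e_m}$ is handed to us by the hypothesis --- and then applies Proposition \ref{prop:ratsec} to ${\rm Gr}(r-1,P)$ in one step. You instead prove the corollary as literally stated: you introduce $n'\coloneqq\ind(\alpha_{\bar\eta})$ at the geometric generic point and show that the very general closed fibre attains exactly this index, running the countable-Hilbert-scheme argument a second time, outside of Proposition \ref{prop:ratsec}, to show that each locus $\{\,t:\ind(\alpha_t)\mid m\,\}$ for a proper divisor $m$ of $n'$ is countable. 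This buys a genuinely stronger conclusion (the very general index is well defined and equals the geometric generic one) at the cost of the bookkeeping with the finite cover $D'\to D$; the uncountability of $k$ that you invoke is in any case already assumed in the proof of Proposition \ref{prop:ratsec}. Two routine points to make explicit if you write this up: the Hilbert schemes you use both for spreading out and for counting components should be taken for relative compactifications, with the ``relative degree one'' condition read off from the intersection number with the fibre class, exactly as in the proof of Proposition \ref{prop:ratsec}; and the corollary tacitly inherits from that proposition the hypothesis that $\kx\to D$ is projective away from $0$, which your base change along $D'\to D$ preserves.
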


\begin{proof}
In the application, we will only be interested in the case that
all $\ind(\alpha_t)$ for $t\ne 0$ divide a fixed number $r$ and we will
restrict to this case. 

Consider the relative Grassmannian $\kg\coloneqq {\rm Gr}(r-1,P)\to \kx$.
Since for $t\ne 0$ we assume $\ind(\alpha_t)\mid r$,  the projection
$\kg_t\to\kx_t$ admits a rational section. Then, Proposition \ref{prop:ratsec}
implies that also $\kg_0={\rm Gr}(r-1,P_{\kx_0})$ admits a rational section
and, therefore, $\ind(\alpha_0)\mid r$.
\end{proof}

\subsection{Unramifying via deformation} Assume $P\to X$ is a Brauer--Severi variety  of relative dimension $n-1$ over a quasi-projective variety $X$. According to Lemma \ref{lem:Or}, there exists a relative $\ko(d)$ on $P$, where $d$ is the period of $[P]\in\Br(X)$. We then consider 
a vector space $V$ of dimension $n$ and fix an integer $i\geq0$  sufficiently large, depending 
on $n$, $d$, and $\dim(X)$.
The classifying morphism according to Proposition \ref{prop:GlobalUniv} is denoted $\psi\colon X\to\kt_i$.\smallskip

The following is a version of \cite[Prop.\ 2.4.2]{dJS}, see also  \cite[Thm.\ 1.4]{RS1} where a similar situation for finite groups is considered.

\begin{prop}\label{prop:fam} Assume $\psi\colon X\to\kt_i$ is an embedding.
Then there exists a smooth family $\kx\to D$ over a smooth curve $D$ 
with a distinguished closed point $0\in D$ and a
Brauer--Severi variety $P_D\to\kx$ together with a relative $\ko(d)$ such that:
\begin{enumerate}
\item[(i)] The closed fibre $\kx_0$ over $0\in D$ is isomorphic to a non-empty open subset  $U\subset X$.
\item[(ii)] The restriction $P_D|_{\kx_0}$ to $\kx_0$  is isomorphic
to the restriction $P|_U\to U$.
\item[(iii)] The family $\kx^{\rm o}\to D\,\setminus\,\{0\}$ 
obtained by restricting $\kx\to D$  is projective.
\end{enumerate}
\end{prop}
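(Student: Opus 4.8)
The plan is to realise the conclusion as a one-parameter degeneration of the projective closure of $\psi(X)$ inside a compactification of $\kt_i$ with very small boundary: a general member of the degeneration will be complete and contained in the interior $\kt_i$ (hence projective), while the special member still recovers an open subset of $X$ together with its Brauer--Severi variety. First I would fix $i$ large enough that Proposition~\ref{prop:Dim} furnishes a projective compactification $\kt_i\subset\bar\kt_i$ whose boundary $\partial\kt_i\coloneqq\bar\kt_i\setminus\kt_i$ has codimension $c>\dim(X)+1$; this is precisely the flexibility encoded in the freedom to choose $i$ large in terms of $n$, $d$, and $\dim(X)$. Since $\psi$ is an embedding, $\psi(X)\cong X$ is locally closed in $\kt_i$; let $\bar X\subset\bar\kt_i$ be its closure. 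Because $X$ is not proper, $\bar X$ meets $\partial\kt_i$, and I would record that $\bar X\cap\kt_i$ is open in $\bar X$ and contains $\psi(X)$, so that a complete subvariety of $\kt_i$ is the same thing as a subvariety of $\bar\kt_i$ disjoint from $\partial\kt_i$.

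The heart of the proof is a Bertini/moving step, in the spirit of Totaro and of de Jong--Starr, producing a family $\bar\kx\subset\bar\kt_i\times D$ flat over a smooth curve $D$ with a marked point $0\in D$, with central fibre $\bar\kx_0=\bar X$ and general fibre $\bar\kx_t$ a deformation of $\bar X$ disjoint from $\partial\kt_i$. Concretely I would embed $\bar\kt_i\hookrightarrow\PP^M$ by a sufficiently positive very ample linear system and move $\bar X$ through a generic pencil; the decisive input is the dimension count $\dim(\bar X)+\dim(\partial\kt_i)<\dim(\bar\kt_i)$, valid exactly because $c>\dim(X)$, which forces a general member of the pencil to avoid the small-codimensional boundary. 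Thus for $t\neq0$ the fibre $\kx_t\coloneqq\bar\kx_t$ is closed in the projective $\bar\kt_i$ and contained in $\kt_i$, hence projective and (as $\kt_i$ is smooth) smooth by generic smoothness, while $\bar\kx_0\cap\kt_i=\bar X\cap\kt_i\supseteq\psi(X)$. Since $\psi(X)$ is open in $\bar X\cap\kt_i$, its complement there lies entirely in the central fibre and so is closed in the total space; excising it gives an open subfamily $\kx\to D$ with $\kx_0\cong U$ for a non-empty open $U\subseteq X$ (contained, if necessary, in the smooth locus of $X$), with the general fibres $\kx_t$ unchanged and still projective. This yields (i) after shrinking $D$, and (iii) because $\kx^{\rm o}\to D\setminus\{0\}$ is closed in $\bar\kt_i\times(D\setminus\{0\})$.

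For the Brauer--Severi structure I would pull back the universal family along the morphism $\Psi\colon\kx\hookrightarrow\kt_i\times D\to\kt_i$, setting $P_D\coloneqq\Psi^\ast\kp_i$ with the relative $\ko(d)$ inherited from $\kp_i\to\kt_i$ (Lemma~\ref{lem:Or}). Over the central fibre $\Psi$ restricts to $\psi|_U$, so $P_D|_{\kx_0}\cong(\psi|_U)^\ast\kp_i\cong P|_U$ by the defining property of the classifying map in Proposition~\ref{prop:GlobalUniv}, which gives (ii); over $t\neq0$ the same pullback equips the projective fibres with their relative $\ko(d)$. I expect the main obstacle to be the moving step itself: the natural $\PGL(S^d_iV)$-action on $\bar\kt_i$ preserves $\partial\kt_i$, so group translates of $\bar X$ cannot push it off the boundary, and a given $\bar X$ may otherwise be rigid; the construction must therefore produce an \emph{honest} deformation whose general member avoids $\partial\kt_i$, and it is exactly here that the high codimension of the boundary, hence the choice $i\gg0$, is indispensable. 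A secondary subtlety, handled above by excising the limiting locus inside the central fibre, is to guarantee that $\kx_0$ is genuinely an open subset of $X$ carrying $P$, rather than a larger partial compactification of it.
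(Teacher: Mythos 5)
Your overall strategy --- degenerate a projective subvariety of $\kt_i$ onto something containing $\psi(X)$ as an open subset, using a compactification $\bar\kt_i$ whose boundary has codimension $>\dim(X)$, and then pull back $\kp_i$ --- is exactly the paper's, and your handling of (ii), (iii) and of the excision inside the central fibre is fine. The gap is the moving step itself, which you correctly single out as ``the main obstacle'' but then do not resolve: ``move $\bar X$ through a generic pencil'' is not yet a construction. A pencil of what? The closure $\bar X$ of $\psi(X)$ is an essentially arbitrary subvariety of $\bar\kt_i$ of codimension $\ell=\dim\bar\kt_i-\dim X$; it is not in any evident way a member of a positive-dimensional family of subvarieties sweeping out $\bar\kt_i$, and, as you yourself observe, translating by $\PGL(S^d_iV)$ does not help since that action preserves $\partial\kt_i$. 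The dimension count $\dim\bar X+\dim\partial\kt_i<\dim\bar\kt_i$ is the right inequality, but it has nothing to act on until you exhibit a concrete family of deformations of $\bar X$ whose general member can be compared with the boundary.

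The paper supplies exactly this missing construction: for $M\gg0$ one chooses sections $s_1,\dots,s_\ell$ of $\ko(M)$ on $\bar\kt_i$, with $\ell=\dim\bar\kt_i-\dim X$, such that $\psi(X)$ is an open subset of an irreducible component of the complete intersection $Z(s_1)\cap\cdots\cap Z(s_\ell)$, and then deforms the tuple of sections in a generic one-parameter family $(s_{1,t},\dots,s_{\ell,t})$. Bertini makes the general fibre $\kx_t$ smooth, and it avoids $\partial\kt_i$ precisely because $\dim\partial\kt_i<\ell$, i.e.\ because the boundary has codimension $>\dim X$. Note that this realises $\psi(X)$ only as an open subset of one \emph{component} of the central fibre (the superfluous components are discarded afterwards), not as the full central fibre $\bar X$ as in your formulation; insisting that the central fibre be $\bar X$ itself would again require $\bar X$ to be an honest member of the family, which is exactly what one cannot arrange in general. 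With this complete-intersection device inserted in place of the unspecified ``generic pencil'', the rest of your argument goes through.
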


\begin{proof}
According to Proposition \ref{prop:GlobalUniv}, the classifying morphism $\psi\colon X\to \kt_i$
has the property that $P$ is isomorphic to the pull-back of the global universal Brauer--Severi variety $\kp_i\to \kt_i$. We choose $i$ such that $\kt_i$ admits a projective compactification
$\bar\kt_i$ with a boundary $\partial\kt_i\subset\bar\kt_i$ of codimension $>\dim(X)$, cf.\ Proposition \ref{prop:Dim}. Concretely, it is enough to assume $i>\dim(X)-1$.\smallskip

Clearly, some high power $\ko(M)$ of an ample line bundle $\ko(1)$ on $\bar\kt_i$ admits global sections
$s_1,\ldots, s_\ell$, with $\ell\coloneqq{\rm codim}(X\subset\kt_i)=N_i\cdot N-\dim(X)$, such that $\psi(X)$ is an open subset of an irreducible component of their common zero set:
\begin{equation}\label{eqn:Inter}
\psi(X)\subset Z(s_1)\cap\ldots\cap Z(s_\ell)\subset\bar\kt_i.
\end{equation}

By assumption, $\psi$ is an embedding, i.e.\ $X\cong\psi(X)$. Thus,
after possibly increasing $M$, we can choose a one-dimensional family of sections $(s_{1,t},\ldots,s_{\ell,t})$, $t\in D$ with $(s_{1,0},\ldots,s_{\ell,0})$ satisfying (\ref{eqn:Inter})  and such that for $t\ne0$ the intersection
$ \kx_t\coloneqq Z(s_{1,t})\cap\ldots\cap Z(s_{\ell,t})\subset\bar \kt_i$ is smooth and not intersecting $\partial\kt_i$. \smallskip

In other words, we choose a generic quasi-projective curve $D$ in the Hilbert scheme of complete intersections in $\bar\kt_i$ which maps a distinguished point $0\in D$ to the point corresponding to $Z(s_1)\cap\ldots\cap Z(s_\ell)\subset\bar\kt_i$ and  such that $D\,\setminus\, \{0\}$
is mapped to the open subscheme of the Hilbert scheme parametrising only
smooth subvarieties contained in the open subset $\kt_i$.\smallskip

After taking out the irreducible components of $Z(s_{1,0})\cap\ldots\cap Z(s_{\ell,0})$ different
from the one containing $\psi(X)$, we obtain a family $\kx\to D$ satisfying (i) and (iii). The Brauer--Severi variety $P_D$ is
obtained by pulling back $\kp_i\to\kt_i$ under the natural projection $\kx\to \kt_i$.
\end{proof}
\subsection{Proof of Theorem \ref{thm}}
Assume  $P\to X$ is a Brauer--Severi variety over a smooth quasi-projective variety $X$ of dimension $m$. Replacing $X$ by its image $\psi(X)$ under the classifying map $\psi$, we can reduce to the case that $\psi$ is an embedding. According to Proposition  \ref{prop:fam}, there exists an extension to a Brauer--Severi variety $P_D$ on a smooth family
$\kx\to D$ with special fibre $\kx_0$ isomorphic to an open subset of $X$ and smooth projective fibres
$\kx_t$, $t\ne0$. Denote by $\alpha\in\Br(\kx)$ the Brauer class of $P_D$. Then
its restriction $\alpha_t\in\Br(\kx_t)$ is the class of $P_t\coloneqq P_D|_{\kx_t}$. \smallskip

If the period-index conjecture in the form of (\ref{eqn:conj:pi2}) is known  to hold for projective varieties, then we have $\ind(\alpha_t)\mid\per(\alpha_t)^{e_m}$ for $t\ne0$. 
Then, by virtue of Corollary \ref{cor:spec},  the index $\ind(\alpha_0)$ of $P$ on $X$ satisfies
$$\ind(\alpha_0)\mid\ind(\alpha_t)\mid \per(\alpha_t)^{e_m}.$$
By construction, each $P_t$ comes with a relative $\ko(d)$, where $d=\per(\alpha_0)$.
Hence, $\per(\alpha_t)\mid d=\per(\alpha_0)$ which implies the desired $\ind(\alpha_0)\mid\per(\alpha_0)^{e_m}$.\qed


\end{document}